\documentclass[twoside,12pt]{article}
\setlength{\textheight}{24cm}
\setlength{\textwidth}{16cm}
\setlength{\oddsidemargin}{2mm}
\setlength{\evensidemargin}{2mm}
\setlength{\topmargin}{-15mm}
\parskip2mm

%Packages
\usepackage[usenames,dvipsnames]{color}
\usepackage{amsmath,amsthm,amssymb}
\usepackage[mathcal]{euscript}
\usepackage{xcolor}
\usepackage{cite}
\usepackage{enumitem}
\usepackage{hyperref,url}

%colors for correction
%\definecolor{rosso}{rgb}{0.8,0,0}
\def\last#1{{\color{blue}#1}}
\def\last#1{#1}

%baseline

%biblography
\bibliographystyle{plain}

%environments
\theoremstyle{plain}
\newtheorem{thm}{Theorem}[section]
\newtheorem{rem}{Remark}[section]
\newtheorem{cor}[thm]{Corollary}
\newtheorem{defn}{Definition}[section]
\newtheorem{lem}[thm]{Lemma}
\numberwithin{equation}{section}

%Macros
\def\Q{\mathcal{Q}}

\def\intQ{\int_Q}
\def\iO{\int_\Omega}

\def\dt{\partial_t}
\def\dn{\partial_\bold n}

\def\pdt{\partial_\tau}
\def\ph{\varphi}
\def\s{\sigma}
\def\div{\mathrm{div}}
\def\lam{\lambda}
\def\Lam{\Lambda}
\def\<#1>{\mathopen\langle #1\mathclose\rangle}
\def\mean #1{\overline{#1}}
\def\eps{\varepsilon}

\def\laminfty{\lambda_\infty}
\renewcommand\bar {\overline}

\begin{document}
\title{Strong well-posedness and
inverse identification problem
of a non-local phase field tumor model with degenerate
mobilities}
\author{}
\date{}
\maketitle

\begin{center}
\vskip-1cm
{\large\sc Sergio Frigeri$^{(1)}$}\\
{\normalsize e-mail: {\tt sergio.frigeri@unimi.it}}\\[.25cm]
{\large\sc Kei Fong Lam$^{(2)}$}\\
{\normalsize e-mail: {\tt akflam@hkbu.edu.hk}}\\[.25cm]
{\large\sc Andrea Signori$^{(3)}$}\\
{\normalsize e-mail: {\tt andrea.signori02@universitadipavia.it}}\\[.25cm]

$^{(1)}$
{\small Dipartimento di Matematica ``Federigo Enriques", Universit\`a degli Studi di Milano}\\
{\small via Saldini 50, I-20133 Milano, Italy}\\[.2cm]
$^{(2)}$
{\small Department of Mathematics, Hong Kong Baptist University}\\
{\small Kowloon Tong, Hong Kong}\\[.2cm]
$^{(3)}$
{\small Dipartimento di Matematica e Applicazioni, Universit\`a di Milano--Bicocca}\\
{\small via Cozzi 55, 20125 Milano, Italy}\\[.2cm]
\end{center}

\begin{abstract}
We extend previous weak well-posedness results obtained in Frigeri et al.~(2017) concerning a non-local variant of a diffuse interface tumor model proposed by Hawkins-Daarud et al.~(2012).  The model consists of a non-local Cahn--Hilliard equation with degenerate mobility and singular potential for the phase field variable, coupled to a reaction-diffusion equation for the concentration of a nutrient.  We prove the existence of strong solutions to the model and establish some high order continuous dependence estimates, even in the presence of concentration-dependent mobilities for the nutrient variable in two spatial dimensions.  Then, we apply the new regularity results to study an inverse problem identifying the initial tumor distribution from measurements at the terminal time.  Formulating the Tikhonov regularised inverse problem as a constrained minimisation problem, we establish the existence of minimisers and derive first-order necessary optimality conditions.
\vskip3mm
\noindent {\bf Key words:}
Tumor growth, non-local Cahn--Hilliard equation, degenerate mobility, singular potentials, strong solutions, well-posedness, inverse problem, G\^ateaux differentiability.
\vskip3mm
\noindent {\bf AMS Subject Classification:}
35K86,  % Parabolic equations and systems
35K61,  %parabolic system
35Q92,  %partial differential equations
49J20,  %Calculus of variations and optimal control; optimization
92C50.  %Biology and other natural sciences
\end{abstract}

\pagestyle{myheadings}
\newcommand\testopari{\sc Frigeri -- Lam -- Signori}
\newcommand\testodispari{\sc Regularity results for a tumor model with degenerate mobilities}
\markboth{\testopari}{\testodispari}

\newpage

\section{Introduction}
Mathematical modeling for tumor growth dynamics has undergone a swift development in the last decades (see for instance pioneering works such as \cite{CLLW,CL,WLFC}). Even now, the full complexity of the tumor disease is far from being understood, and through mathematical modeling, scientists and medical practitioners now possess a powerful tool to predict and analyse tumor growth behaviour without inflicting serious harm to the patients.

In this contribution, we address the issue of well-posedness for a certain continuum model for tumor growth.  The original model, derived in Hawkins-Daruud et al.~\cite{HDZO} (see also \cite{HDPZO,HKNZ}), is based on the well-known phase field methodology that has seen increased applications in tumor growth, and takes the form
\begin{align*}
\dt \ph  &= \div (m(\ph) \nabla \mu) + P(\ph)(\sigma + \chi(1-\ph) - \mu), \\
\mu & = A F'(\ph) - B \Delta \ph - \chi \s, \\
\dt \s & = \div (n(\ph) \nabla (\s + \chi(1-\ph))) - P(\ph)(\s + \chi(1-\ph) - \mu),
\end{align*}
where the primary variables $(\ph, \mu, \s)$ denote the phase field, the associated chemical potential, and the nutrient concentration, respectively.  The phase field $\ph$ serves as an indicator on the location of tumor and non-tumor cells, which are separated by a thin interfacial layer whose thickness is related to the positive constants $A$ and $B$, while the non-negative functions $m(\ph)$ and $n(\ph)$ correspond to the cellular and nutrient mobilities, respectively.  The function $F'(\ph)$ is the derivative of a potential $F(\ph)$, which is a characteristic feature of phase field models.  Lastly, the non-negative constant $\chi$ is a chemotactic sensitivity of the nutrient and $P(\ph)$ denotes a proliferation function, see  \cite{GLSS,HDZO} for more details.

The mathematical and numerical analysis, and optimal control for the above model of Hawkins-Daarud et al.~and its variants have been performed by many authors, of which we mention \cite{CRW,CGH,CGRS_VAN,CGRS_OPT,CGRS_ASY,CSS1,FGR,HKNZ,S_a,S_b,S_DQ,S,S_time,WZZ}.  Such intensive study and broad range of results are possible thanks to the Lyapunov structure of the model, where in a bounded domain $\Omega \subset \mathbb{R}^d$, under no-flux boundary conditions
$\dn \ph = m(\ph)\dn \mu = n(\ph)\dn \s = 0$ ($\dn f = \nabla f \cdot \bold n$ is the normal derivative) sufficiently smooth solutions satisfy
\begin{align*}
\frac{d}{dt}E(\ph(t),\s(t)) & + \int_0^t \|m^{1/2}(\ph) \nabla \mu\|^2 + \| n^{1/2}(\ph) \nabla (\s - \chi \ph)\|^2 \\
& + \int_0^t \| P^{1/2}(\ph)(\s + \chi(1-\ph) - \mu) \|^2  = 0,
\end{align*}
with the free energy function
\begin{align*}
E(\ph, \s) :=\mathcal{L}(\ph) + \int_\Omega  \frac{1}{2} |\s|^2 + \chi \s(1-\ph), \quad \mathcal{L}(\ph) := \int_\Omega A F(\ph) + \frac{B}{2} |\nabla \ph|^2.
\end{align*}
\last{The above energy equality can be} \last{formally derived by summing} \last{the resulting equalities obtained}
\last{by multiplying} \last{the three equations in the tumor model with $\mu$, $\dt \ph$ and $\s \last{+} \chi (1-\ph)$, respectively,}
\last{and by integrating them over $\Omega$}.
In the above, $\mathcal{L}(\ph)$ is the Ginzburg--Landau energy function, which is responsible for phase separation and surface tension effects in the context of phase field models.  In our current context of tumor growth, $\mathcal{L}(\ph)$ is associated with cell-to-cell adhesion, where tumor cells prefer to adhere to each other rather than to non-tumor cells.

More recent studies have proposed to include fluid motion \cite{DFRGM,EGAR,EGAR2,EK_ADV,EK,FLRS,FLOW,GLNS,GLSS,JWZ,SW,WLFC,EGN,EL} and elasticity effects \cite{GLS_elastic,Lima1,Lima2} to better emulate in vivo tumor growth, where the cellular environment such as the presence of the extracellular matrix or rigid bone can assert significant influences on tumor proliferation.  In this work, we focus on a different aspect, where we replace the term $\frac{B}{2} |\nabla \ph(x)|^2$ in the Ginzburg--Landau energy function $\mathcal{L}(\ph)$ with a convolution $- \frac{B}{2} \int_\Omega J(x-y)\ph(x) \ph(y) dy$, leading to a non-local variant of the free energy function
\begin{equation}\label{nonloc:GL}
\begin{aligned}
\mathcal{E}(\ph, \s) &:= \mathcal{F}(\ph) + \int_\Omega \frac{1}{2} |\s|^2 + \chi \s(1-\ph), \\
 \mathcal{F}(\ph) &:= - \frac{B}{2} \int_\Omega \int_\Omega J(x-y) \ph(x) \ph(y) + A \int_\Omega F(\ph).
\end{aligned}
\end{equation}
\last{For a discussion on the motivation and physical relevance of the choice of this form for the non-local free energy function
$ \mathcal{F}$ we refer the reader to \cite[Introduction]{F2} (see also \cite{GGG}).}
Since its introduction by Giacomin and Lebowitz \cite{GL,GL1,GL2}, phase field models derived from the non-local Ginzburg--Landau energy $\mathcal{F}(\ph)$ have been the subject of intensive studies, \last{see, e.g., \cite{GZ,CFG,F1,F2,FGG2,FGGS,FG1,FG2,FGK,FGR2,FGS,GGG}.}
%~\cite{CFG,FGG2,FGGS,FG1,FGR2,FGS,GZ}. 
In our present context, the non-local energy function $\mathcal{F}(\ph)$ accounts for non-local cell-to-cell adhesion, compare also with \cite{APS,Chap,FLNOW,GC,SS}.  The resulting tumor model now reads as
\begin{subequations}\label{nonloc:Haw}
\begin{alignat}{2}
\dt \ph & = \div (m(\ph) \nabla \mu) + P(\ph)(\s + \chi(1-\ph) - \mu), \label{nonloc:Haw:1} \\
\mu & = A F'(\ph) - B J \star \ph - \chi \s, \label{nonloc:Haw:2} \\
\dt \s & = \div (n(\ph) \nabla(\s + \chi (1-\ph))) - P(\ph)(\s + \chi(1-\ph) - \mu), \label{nonloc:Haw:3}
\end{alignat}
\end{subequations}
where $J$ is a suitable spatial convolution kernel and
\begin{align*}
(J \star \ph)(x,t) := \int_\Omega J(x-y) \ph(y,t) dy\qquad\last{\forall (x,t) \in Q} := \Omega \times (0,T).
\end{align*}
In \cite{FLR}, the existence of weak solutions to \eqref{nonloc:Haw} for a wide range of non-degenerate mobility functions $m, n$, proliferating function $P$, and potential $F$ has been established by the first and second authors of this work.  Continuous dependence on initial data (and hence uniqueness of weak solutions) can be achieved under the additional requirement that $\chi = 0$ and $n = 1$.  Furthermore, by adapting the method introduced in \cite{EG} the authors in \cite{FLR} were able to establish weak well-posedness of the non-local tumor model \eqref{nonloc:Haw} when the mobility $m(s)$ is degenerate at $s = \pm 1$, and the potential $F(s)$ is singular at $s = \pm 1$, i.e., $\lim_{s \to \pm 1} F'(s) = \pm \infty$.  The prototypical example is
\begin{align}\label{m:F:eg}
m(s) = D(s)(1-s^2), \quad F(s) = (1-s) \log(1-s) + (1+s) \log(1+s)
\end{align}
for $s \in [-1,1]$ and a non-degenerate function $D$.  The need to consider such degenerate/singular terms in the tumor model arises from the fact that certain physical quantities such as tumor mass densities are only defined if the phase field variable $\ph$ belongs to the physical interval $ [-1,1]$.  This cannot be guaranteed, even at the continuous level, if one employs a smooth potential $F$, such as the classical quartic function $F(s) = (s^2-1)^2$.

On the other hand, the presence of these degenerate/singular terms \last{limits} the analytical investigations of \eqref{nonloc:Haw} to the class of weak solutions, and to the best of the authors' knowledge, numerical analysis and optimal control involving the non-local model \eqref{nonloc:Haw} with degenerate mobility and singular potentials have not received much attention in the literature. Therefore, the purpose of this work is to prove the well-posedness of strong solutions in order to facilitate future investigations.

In the following, we consider a bounded domain $\Omega \subset \mathbb{R}^d$, $d \in \{2,3\}$ with Lipschitz boundary $\Gamma := \partial \Omega$.  For a fixed but arbitrary constant $T > 0$ we denote the parabolic cylinder and its boundary by $Q_t := \Omega \times (0,t)$ and $\Sigma_t := \Gamma \times (0,t)$ for all $t \in (0,T)$, with $Q := Q_T$ and $\Sigma := \Sigma_T$.  In light of previous results in \cite{FLR} we switch off the chemotaxis mechanism by setting $\chi = 0$, and owing to the degeneracy of the mobility $m$ the gradient of the chemical potential $\mu$ which appears in equation \eqref{nonloc:Haw:1} cannot be controlled in any Lebesgue space.  Thus, following \cite{B,EG,FGGS,FGR2,GZ,GL1,GL2}, we introduce the auxiliary function
\begin{align}\label{definition_lam}
\lam (s):= m(s)F''(s),	 \quad \Lam (s):=A\int_0^s \lam(r) dr \qquad\last{\forall  s\in[-1,1]},
\end{align}
which exhibits the following useful relations
\begin{align}
\label{prop_lam}
\nabla \Lam (\ph) =A \lam(\ph)\nabla \ph, \quad	\dt \Lam (\ph) =A \lam(\ph)\dt\ph,
\end{align}
and upon substituting \eqref{nonloc:Haw:2} into \eqref{nonloc:Haw:1} and \eqref{nonloc:Haw:3}, we arrive at the following strong formulation of non-local model \eqref{nonloc:Haw}:
\begin{subequations}\label{new:model}
\begin{alignat}{2}
 \notag \dt \ph - \Delta \Lam (\ph) &= - B \, \div ( m(\ph) (\nabla J\star \ph) )  \\
 & \quad + P(\ph)(\s - AF'(\ph)+ B J\star\ph ) \quad \text{ in } Q, \label{eq_first} \\
 \dt \s - \div (n(\ph)\nabla \s) &= - P(\ph)(\s - AF'(\ph)+ B J\star\ph ) \quad \text{ in } Q .\label{eq_second}
\end{alignat}
\end{subequations}
For boundary conditions, we take the no-flux conditions $m(\ph) \dn \mu = n(\ph) \dn \s = 0$, which translate to
\begin{align}\label{bc}
[\nabla \Lam(\ph) - {B}m(\ph)(\nabla J \star \ph)]\cdot \bold{n} = 0, \quad n(\ph)\dn\s=0 \quad \text{ on } \Sigma,
\end{align}
and for initial conditions, we prescribe
\begin{align}\label{ic}
\ph(x,0) = \ph_0(x), \quad \s(x,0) = \s_0(x)\quad \text{ for } x \in \Omega.
\end{align}
Since the weak well-posedness to \eqref{new:model}-\eqref{ic}, which we collectively call $(\mathrm{P})$, is a direct consequence of the main results of \cite{FLR}, the focus of this work is to show the existence of strong solutions using techniques inspired by \cite{FGGS} for the non-local Cahn--Hilliard--Navier--Stokes system.  In our setting, this involves a bootstrapping argument in which we first improve the regularity of $\ph$ by fixing $\s$ and employing a time discretisation of \eqref{eq_first}, and then we improve the regularity of $\s$ with the help of new regularities for $\ph$.  Under suitable assumptions detailed in the next section, our main results are $H^1(0,T;L^2(\Omega)) \cap L^\infty(0,T;H^1(\Omega)) \cap L^2(0,T;H^2(\Omega))$-regularities for $\ph$ and $\s$ (see Theorem \ref{thm:strong}) for $d \in \{2,3\}$ and $W^{1,\infty}(0,T;L^2(\Omega)) \cap H^1(0,T;H^1(\Omega)) \cap L^\infty(0,T;H^2(\Omega))$-regularities for $\ph$ and $\s$ with general assumptions for $d = 2$,
whereas for $d=3$ solely under the additional requirements that the nutrient mobility $n=1$ and $\lam$ is a positive constant (see Theorem \ref{thm:superstrong}).  In turn, these regularities lead to continuous dependence on initial data in stronger norms (see Theorems \ref{thm:cts1} and \ref{thm:cts2}) compared to those established in \cite{FLR}.
It is also worth mentioning the arguments of \cite{FGGS} do not apply directly to our model due to the presence of the proliferation term $P(\ph)(\s - A F'(\ph) + B J \star \ph)$ in \eqref{eq_first}, and some crucial parts of the argument have to be modified in order for the analysis to go through.

As an application of the new solution regularities, we study an inverse problem relevant to tumor growth, which involves identifying the initial tumor distribution encoded by $\ph_0$ based on measurements of the phase field variable at terminal time $\ph(T)$. Thanks to the well-posedness of $(\mathrm{P})$ we can introduce the notion of a solution operator $S : \ph_0 \mapsto \ph(T)$.  Then, given a measurement $\ph_\Omega : \Omega \to \mathbb{R}$ of the phase field variable, the inverse problem can be formulated as:
\begin{align}\label{inv}
\text{ Find } \ph_0 \text{ such that } S(\ph_0) = \ph_\Omega \text{ a.e.~in } \Omega.
\end{align}
Due to the compactness of the solution operator $S : H^1(\Omega) \to H^1(\Omega)$, the inverse problem is ill-posed \cite[Chap.~10]{EHN}.
To overcome this, we employ Tikhonov regularisation and formulate the resulting problem as a constrained minimisation problem.  More precisely, we employ optimal control methods treating $\ph_0$ as the optimal control to the problem
\begin{align}\label{opt}
\ph_0 = \mathrm{argmin}_{u \in U}\Big ( \frac{1}{2} \| S(u) - \ph_\Omega \|_{L^2(\Omega)}^2
+ \frac{\alpha}{2} \| u \|_{H^1(\Omega)}^2 \Big ),
\end{align}
where $U$ denotes a suitable set of admissible controls and $\alpha > 0$ is a regularisation parameter.  Our main results for \eqref{opt} are (i) the existence of a solution $\bar \ph_0^\alpha \in U$ for any $\alpha > 0$, (ii) how to obtain a solution to the inverse problem \eqref{inv} from $\{ \bar \ph_0^\alpha \}_{\alpha > 0}$ as $\alpha \to 0$ (provided the solution set of \eqref{inv} is non-empty), and (iii) the derivation of first-order optimality conditions for $\bar \ph_0^\alpha$.  The precise formulation can be found in Theorem \ref{thm:opt}.  In particular, thanks to the new solution regularities to $(\mathrm{P})$, practitioners interested in solving the inverse identification problem \eqref{inv} that involve the non-local tumor model \eqref{new:model} with degenerate mobility and singular potentials can first obtain numerical approximations of $\{\bar \ph_0^\alpha\}_{\alpha > 0}$ by solving the optimality conditions, and then sending $\alpha \to 0$ in an appropriate way to deduce a solution to \eqref{inv}.

The paper is structured as follows:  In Section~\ref{sec:setting} we state the notation and recall previous results on $(\mathrm{P})$, and in Section~\ref{sec:strong} we state and prove strong well-posedness to $(\mathrm{P})$.  In Section~\ref{sec:appl} we study the optimal control problem \eqref{opt} and derive desirable properties involving minimisers and the first-order optimality conditions.

\section{Mathematical setting and previous results}\label{sec:setting}
In this section, we recall some useful mathematical tools and previous results on $(\mathrm{P})$ established in \cite{FLR}.  We define
\begin{align}
H:= L^2(\Omega), \quad V:= H^1(\Omega), \quad W := H^2(\Omega),
\end{align}
and equip them with their standard norms.
Moreover, for an arbitrary Banach space $X$,
we indicate with $\|\cdot\|_X$, $X^*$, and $\< \cdot, \cdot >_X$
its norm, its topological dual and the duality pairing between $X^*$ and $X$, respectively.
Likewise, for every $1\leq p \leq \infty$, we simply use $\|\cdot\|_{p}$
to denote the usual norm in $L^p(\Omega)$, with $\|\cdot\| = \|\cdot\|_2$.  Furthermore, we use $(\cdot, \cdot)$ to denote the $L^2(\Omega)$-inner product.
As $(V,H,V^*)$ forms a Hilbert triplet, i.e., the injections $V \subset H \equiv H^* \subset V^*$
are both continuous and dense, we have the following identification
\begin{align*}
	\< u,v >_V = \iO uv
	\quad \last{\forall u\in H,\,\,\,\forall v\in V.}
%\hbox{for every $u \in H$ and $v \in V$}.
\end{align*}
For $u \in L^1(\Omega)$, we use the notation $\mean{u} = \tfrac{1}{|\Omega|}(u,1)$ to denote the mean value of $u$.  \last{The Gagliardo--Nirenberg interpolation inequality in dimension $d$ (see \cite[Thm.~2.1]{DiBe} or \cite{BreMi}) is stated} \last{as follows.}
\last{Let $\Omega$ be a bounded domain with Lipschitz boundary and $f \in W^{m,r}(\Omega) \cap L^q(\Omega)$, $1 \leq q,r \leq \infty$.  For any integer $j$, $0 \leq j < m$, suppose there is an $\alpha \in \mathbb{R}$ such that
\begin{align*}
\frac{1}{p} = \frac{j}{d} + \left ( \frac{1}{r} - \last{\frac{m}{d}} \right ) \alpha + \frac{1-\alpha}{q} , \quad \frac{j}{m} \leq \alpha \leq 1.
\end{align*}
Then, there exists a positive constant $C$ depending only on $\Omega$, $m$, $j$, $q$, $r$, and $\alpha$ such that
\begin{align}
\| D^j f \|_{\last{L^p(\Omega)}} \leq C \| f \|_{W^{m,r}\last{(\Omega)}}^\alpha \| f \|_{L^q\last{(\Omega)}}^{1-\alpha}.\label{e19}
\end{align}
}
The following particular case of the Gagliardo--Nirenberg inequality in two dimensions will be repeatedly employed throughout our analysis
\begin{align}\label{interpol_abs_2d}
\|f\|_4 & \leq C \|f\|^{1/2} \|f \|_V^{1/2} \qquad\forall f \in V.
\end{align}
Lastly, we also recall the Agmon's inequality in two dimensions, see e.g.~\cite[Lem.~13.2]{Agmon}:
\begin{align}
\| f \|_{\infty} \leq C \|f \|^{1/2} \| f \|^{1/2}_{W}
	\qquad \forall f \in W.
	\label{agmon}
\end{align}
Throughout the paper, we will use the symbol $C$ to denote constants which depend only on structural data of the problem.  On the other hand, we will sometimes stress the dependence of the appearing constant by adding a self-explanatory subscript. Moreover, $\mathbb{Q}\geq 0$ will stand for a generic monotone non-decreasing continuous function of all its arguments.

For the analysis, we make the following structural assumptions:
\begin{enumerate}[label=$(\mathrm{A \arabic*})$, ref = $\mathrm{A \arabic*}$, series=A]
\item \label{ass:m:F} $m\in C^0([-1,1])$ and $F \in C^2(-1,1)$ with
\begin{align*}
m(s)> 0, \quad F''(s) \geq 0\quad \forall s \in (-1,1), \\
 m(\pm 1)=0,
\quad \lam :=mF'' \in C^0([-1,1]),
\end{align*}
\last{the last condition meaning that $mF''$ can be extended by continuity to the closed interval $[-1,1]$.} Moreover, there exist constants $\eps_0\in(0,1]$ and $\alpha_0 > 0$ such that $m$ is non-increasing in $[1-\eps_0,1]$ and non-decreasing in $[-1,-1+\eps_0]$,  $F''$ is non-decreasing in $[1-\eps_0,1)$ and non-increasing in $(-1,-1+\eps_0]$, and $\lambda(s) \geq \alpha_0$ for all $s \in [-1,1]$.
\item \label{ass:n} $n\in C^0([-1,1])$ and there exist
a positive constant $n_*$ such that
\begin{align*}
0 < n_* \leq n(s) \quad \forall s \in [-1,1].
\end{align*}
\item \label{ass:J} $J\in W^{1,1}_{\mathrm{loc}} (\mathbb{R}^d)$ such that
\begin{align*}
J(-z)=J(z), \quad a^*:= \sup_{x\in\Omega} \iO |J(x-y)| dy < \infty, \quad
 b:= \sup_{x\in\Omega} \iO |\nabla J(x-y)| dy < \infty.
\end{align*}
 \item \label{ass:P} $P \in C^0 ([-1,1])$ is non-negative, and there exist positive constants $k$ and $\eps_0$ such that
\begin{align*}
\sqrt{P(s)} \leq k m(s) \quad \forall s \in [-1,-1+\eps_0] \cup [1-\eps_0,1], \quad PF'\in C^0{([-1,1])}.
\end{align*}
\item \label{ass:ini} $\ph_0\in H$, $|\ph_0|\leq 1$ a.e.~in $\Omega$,
$M(\ph_0)\in L^1(\Omega)$, and $\s_0\in H$, where the entropy function $M\in C^2(-1,1)$ is defined by
$m(s)M''(s)=1$ for all $s\in(-1,1)$, and $M(0)=M'(0)=0$.
\end{enumerate}
For convenience, we will denote with $\laminfty$ and $P_\infty$ the uniform bound of $\lam$ and $P$, respectively.

\begin{rem}
{\upshape
We point out that, as a consequence of \eqref{ass:J}, we have that
\begin{align}
&\| J\star\ph\|_{p}\leq a^* \,\|\ph \|_{p},\quad
\| \nabla J\star\ph \|_{p}\leq b\,\| \ph \|_{p} \quad
\forall \ph \in L^p(\Omega)\label{conv-op}
\end{align}
and for all $1\leq p\leq \infty$. These estimates will be repeatedly
employed.
}
\end{rem}

\begin{rem}
{\upshape
Notice that, thanks to \eqref{ass:m:F} and to the definition of the entropy function $M$, the condition
$M(\ph_0)\in L^1(\Omega)$ in \eqref{ass:ini} implies $F(\ph_0)\in L^1(\Omega)$, see e.g.~\cite[Remark 1, p.~226]{FLR}.}
\end{rem}

\begin{rem}
{\upshape (Corrigendum for \cite{FLR}). At the beginning of \cite{FLR} the boundary conditions associated to system $(\mathrm{P})$ are given by \eqref{bc}, instead of $\dn \mu = 0$ as stated in \cite[(1.5)]{FLR}.
}
\end{rem}

\begin{rem}\label{alt_cond}
{\upshape
A careful look to the proof of \cite[Thm.~2.3]{FLR} shows that \eqref{ass:P} can be replaced by the following assumption, which is more general as far as the proliferation function $P$ is concerned
\begin{itemize}
\item[(A4*)] $P\in C^0([-1,1])$, $P\geq 0$, and $PF' \,,PM' \,,PM' F'\,,PF'' \in C^0([-1,1]).$
\end{itemize}
The advantage of this condition is that it allows us to include proliferation functions of the form
$P(s)=P_0(1-s^2)^\alpha\chi_{[-1,1]}(s)$, with $\alpha=1$,
once the mobility and potential are assumed as in \eqref{m:F:eg}, where $P_0$ denotes a non-negative constant.  Notice that, given \eqref{m:F:eg}, in order to satisfy \eqref{ass:P} we need $\alpha\geq 2$.
}
\end{rem}

Under the above assumptions, the existence of weak solutions can be obtained by employing a suitable approximation scheme that resembles the one introduced in \cite{EG}.  More precisely, an approximate problem is solved at first by suitably regularizing $F$, $P$ and $m$.  Then, uniform estimates with respect to the approximating parameter are derived which allow to pass to the limit by classical weak and strong compactness arguments.  The weak existence result for $(\mathrm{P})$ that can derived from \cite[Thm.~2.3]{FLR} is formulated as follows.

\begin{thm}\label{thm:weak}
Assume that \eqref{ass:m:F}-\eqref{ass:ini} are satisfied.  Then, there exists a weak solution $[\ph,\s]$ to $(\mathrm{P})$ in the following sense:
\begin{itemize}
\item
it enjoys the following regularity
\begin{align}\label{regweak}
\ph,\s &\in { H^1(0,T;V^*)\cap L^2(0,T;V)\subset C^0([0,T];H)}, \\[1mm]
%H^1(0,T;V^*) \cap L^\infty(0,T;H) \cap L^2(0,T;V), \\[1mm]
\ph &\in L^\infty (Q), \quad |\ph(x,t)|\leq 1 \quad \text{a.e.~in }Q; \label{uniformbound}
\end{align}
\item for every $v,w\in V$ and almost every $t \in (0,T)$ we have that
\begin{align}
\notag
\<\dt \ph, v>_V + \iO \nabla \Lam (\ph) \cdot \nabla v & =  \iO Bm(\ph) (\nabla J\star \ph)\cdot  \nabla v \\
& \quad  +\iO P(\ph)(\s  - AF'(\ph)+ B J\star\ph ) v,
\label{W_first}\\
\label{W_second}
\<\dt \s, w >_V + \iO n(\ph)\nabla \s \cdot \nabla w & = - \iO P(\ph)(\s - AF'(\ph)+ B J\star\ph )w,
\end{align}
along with the initial conditions $\ph(0)=\ph_0$ and $\s(0)=\s_0$ in $H$.
\end{itemize}
Moreover, there exists a positive constant $K_1$
which depends only on $\Omega$, $T$, and on the data of the system such that
\begin{align*}
\|\ph\|_{{ H^1(0,T;V^*)\cap L^2(0,T;V)}} + \|\s \|_{{ H^1(0,T;V^*)\cap L^2(0,T;V)}} 	\leq K_1.
\end{align*}
\end{thm}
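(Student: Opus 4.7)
My plan is to re-enact the approximation scheme of Elliott--Garcke \cite{EG}, as indicated by the excerpt, in three stages: (i) regularize the degenerate mobility and singular potential, (ii) solve the approximate system and derive uniform estimates, (iii) pass to the limit via compactness. Concretely, for some small $\eps>0$ I would replace $m$ by a uniformly positive function $m_\eps:= m+\eps$ (extended suitably outside $[-1,1]$) and $F$ by a convex $C^2$-regularization $F_\eps$ on all of $\mathbb{R}$ with polynomial growth (e.g.~of Moreau--Yosida type) that agrees with $F$ on $[-1+\eta_\eps,1-\eta_\eps]$ for some $\eta_\eps\to 0$. By \eqref{ass:m:F}, the quantity $\lam_\eps:= m_\eps F_\eps''$ can be arranged to be continuous, uniformly bounded from above by (essentially) $\laminfty$, and uniformly bounded below by $\alpha_0/2$ for all $\eps$ small enough. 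The associated entropy $M_\eps$, defined by $m_\eps M_\eps''=1$ with $M_\eps(0)=M_\eps'(0)=0$, is then smooth on $\mathbb{R}$ and converges to $M$ on $(-1,1)$ while blowing up outside $[-1,1]$ as $\eps\to 0$. The function $P$ needs only be extended by zero outside $[-1,1]$.

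The regularized system has smooth non-degenerate coefficients and a potential of polynomial growth, so existence of an approximate solution $(\ph_\eps,\s_\eps)$ satisfying \eqref{regweak} (without the $L^\infty$ bound yet) follows from a Faedo--Galerkin scheme based on the eigenfunctions of the Neumann Laplacian, together with standard ODE theory and weak/strong compactness arguments. The heart of the proof is the derivation of three uniform bounds. First, an \emph{energy estimate} is obtained by testing \eqref{W_first} with $\mu_\eps:= AF_\eps'(\ph_\eps)-BJ\star\ph_\eps$ and \eqref{W_second} with $\s_\eps$, summing and integrating in time; the singular source $P(\ph_\eps)F_\eps'(\ph_\eps)\mu_\eps$ is absorbed thanks to \eqref{ass:P}, specifically the bound $\sqrt{P}\leq km$ near $\pm 1$, which gives $|P F_\eps'|\leq C$ uniformly in $\eps$ on a neighborhood of the endpoints. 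This yields uniform bounds of $\ph_\eps,\s_\eps$ in $L^\infty(0,T;H)\cap L^2(0,T;V)$, of $\sqrt{m_\eps(\ph_\eps)}\nabla\mu_\eps$ in $L^2(Q)$, and of $F_\eps(\ph_\eps)$ in $L^\infty(0,T;L^1(\Omega))$. Second, an \emph{entropy estimate} is obtained by testing the equation for $\ph_\eps$ with $M_\eps'(\ph_\eps)$ and using $m_\eps M_\eps''=1$; together with the control $|PM'|\leq C$ (which follows from \eqref{ass:P} extended as in Remark \ref{alt_cond}) this produces a uniform $L^\infty(0,T;L^1(\Omega))$ bound on $M_\eps(\ph_\eps)$. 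Third, comparison in the two equations and the convolution bounds \eqref{conv-op} give $\dt\ph_\eps,\dt\s_\eps\in L^2(0,T;V^*)$ uniformly.

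Armed with these estimates, I extract a subsequence with $(\ph_\eps,\s_\eps)\rightharpoonup(\ph,\s)$ weakly in $H^1(0,T;V^*)\cap L^2(0,T;V)$ and, by Aubin--Lions, strongly in $L^2(0,T;H)$ and in $C^0([0,T];V^*)$. The pointwise bound $|\ph|\leq 1$ a.e.~in $Q$ is forced by the uniform entropy bound, combined with Fatou's lemma and the fact that $M_\eps$ diverges outside $[-1,1]$ as $\eps\to 0$. The main technical obstacle is the identification of weak limits in the three nonlinear terms. The term $\nabla\Lam_\eps(\ph_\eps)=A\lam_\eps(\ph_\eps)\nabla\ph_\eps$ converges weakly to $\nabla\Lam(\ph)$ in $L^2(Q)$ since $\lam_\eps(\ph_\eps)\to\lam(\ph)$ a.e.~and boundedly (by dominated convergence), and $\nabla\ph_\eps\rightharpoonup\nabla\ph$ in $L^2(Q)$. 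The transport-type term $m_\eps(\ph_\eps)(\nabla J\star\ph_\eps)$ passes to the limit strongly in $L^2(Q)$ using the continuity of $m$, the uniform bound $|\ph_\eps|\leq 1+o(1)$, strong convergence of $\ph_\eps$, and \eqref{conv-op}. The most delicate term is the singular source $P(\ph_\eps)F_\eps'(\ph_\eps)$: here the hypothesis $PF'\in C^0([-1,1])$ from \eqref{ass:P} is decisive, guaranteeing a uniform $L^\infty(Q)$ bound and a.e.~convergence to $P(\ph)F'(\ph)$. The initial conditions transfer to the limit thanks to the embedding $H^1(0,T;V^*)\cap L^2(0,T;V)\hookrightarrow C^0([0,T];H)$, and the global estimate with constant $K_1$ follows by lower semicontinuity of norms under weak convergence from the uniform bounds in stage (ii).
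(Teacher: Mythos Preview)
Your proposal is correct and follows the same Elliott--Garcke regularization route that the paper indicates; note that the paper does not actually prove Theorem~\ref{thm:weak} but quotes it from \cite[Thm.~2.3]{FLR}, with only the one-paragraph summary preceding the statement as a guide, and your sketch is a faithful and more detailed reconstruction of that argument.

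Two minor technical remarks. First, the regularization $m_\eps=m+\eps$ is not the one used in \cite{EG,FLR} or in the paper's own later arguments (cf.~\eqref{e18} in the proof of Theorem~\ref{thm:strong}, where $m$, $F''$ and $P$ are truncated at the levels $s=\pm(1-\eps)$ rather than shifted upward); your choice can be made to work, but the uniform lower bound $\lam_\eps\geq\alpha_0/2$ and the compatibility with the entropy $M_\eps$ are more transparent with the truncation. Second, in the energy estimate the reaction term $P(\ph_\eps)(\s_\eps-\mu_\eps)$ is absorbed structurally (it produces the good-sign contribution $-\int P(\ph_\eps)(\s_\eps-\mu_\eps)^2$ after summing with the $\s$-equation tested by $\s_\eps$), rather than via the pointwise bound $\sqrt{P}\leq km$; the latter is instead what makes $P_\eps F_\eps'$ uniformly bounded (cf.~\eqref{e1}--\eqref{e2}), which you need when testing the $\ph$-equation with $\ph_\eps$ to get the $L^2(0,T;V)$ control and when testing with $M_\eps'(\ph_\eps)$ for the entropy bound.
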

For continuous dependence on initial data (which also entails uniqueness of solutions) further assumptions are needed:
\begin{enumerate}[label=$(\mathrm{B \arabic*})$, ref = $\mathrm{B \arabic*}$, series=B]
\item \label{ass:cts:m:n} $m \in C^{0,1}([-1,1])$ and $n=1$.
\item \label{ass:cts:P:F} $P, PF' \in C^{0,1}([-1,1])$.
\end{enumerate}
\begin{thm}\label{thm:weakcts}
Suppose that \eqref{ass:m:F}-\eqref{ass:P} and \eqref{ass:cts:m:n}-\eqref{ass:cts:P:F} are satisfied.
Let $[\ph_i,\s_i]$, for $i=1,2$, be two
solutions to $(\mathrm{P})$ corresponding to initial data
$[\ph_{0,i},\s_{0,i}]$ satisfying \eqref{ass:ini}. Then, there exists a  positive constant $K_2$ which depends only on $\Omega$, $T$, and on the data of the system such that
\begin{align*}
& \| \ph_1-\ph_2\|_{L^\infty(0,T;V^*) \cap L^2(0,T;H)}+\|\s_1-\s_2\|_{L^\infty(0,T;V^*) \cap L^2(0,T;H)}
\\[1mm]
& \quad
\leq K_2 \Big (\|\ph_{0,1}-\ph_{0,2}\|_{V^*}+\|\s_{0,1}-\s_{0,2}\|_{V^*} \Big ).
\end{align*}
\end{thm}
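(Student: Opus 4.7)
The plan is to subtract the weak formulations for the two solutions, exploit the monotonicity of $\Lam$ and the Lipschitz structure provided by \eqref{ass:cts:m:n}-\eqref{ass:cts:P:F}, and close via Gronwall's lemma after testing against inverse-Laplacian-type functions. To set up, set $\ph := \ph_1-\ph_2$, $\s := \s_1-\s_2$ and $R := P(\ph_1)N_1 - P(\ph_2)N_2$ with $N_i := \s_i - AF'(\ph_i) + BJ\star\ph_i$. Using $n=1$ from \eqref{ass:cts:m:n}, the pair $[\ph,\s]$ satisfies, for every $v,w \in V$ and a.e.~$t\in(0,T)$,
\begin{align*}
\<\dt\ph, v>_V + \iO \nabla(\Lam(\ph_1)-\Lam(\ph_2))\cdot\nabla v &= B\iO (m(\ph_1)\nabla J\star\ph_1 - m(\ph_2)\nabla J\star\ph_2)\cdot\nabla v + \iO Rv, \\
\<\dt\s, w>_V + \iO \nabla\s\cdot\nabla w &= -\iO Rw.
\end{align*}
Since neither $\mean{\ph}$ nor $\mean{\s}$ is conserved, I would introduce the inverse Neumann Laplacian $\mathcal{N}$ on mean-zero elements of $V^*$: for $u$ mean-zero, $\mathcal{N}u\in V$ is the mean-zero weak solution of $-\Delta\mathcal{N}u = u$ with $\dn\mathcal{N}u=0$, and $\|u\|_{V^*_0}:=\|\nabla\mathcal{N}u\|$ is equivalent to the $V^*$-norm on zero-mean functions. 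I then split $\ph = \tilde\ph + \mean\ph$ and $\s = \tilde\s + \mean\s$.

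The core test functions would be $v = \mathcal{N}\tilde\ph$ in the phase equation and $w = \mathcal{N}\tilde\s$ in the nutrient equation. These produce $\tfrac12\tfrac{d}{dt}\|\tilde\ph\|_{V^*_0}^2$ and $\tfrac12\tfrac{d}{dt}\|\tilde\s\|_{V^*_0}^2 + \|\tilde\s\|^2$ on the left, respectively. The phase diffusion term, after integration by parts with $-\Delta\mathcal{N}\tilde\ph = \tilde\ph$, becomes $\iO(\Lam(\ph_1)-\Lam(\ph_2))\tilde\ph$; the monotonicity $\Lam'=A\lam\ge A\alpha_0$ from \eqref{ass:m:F} together with the Lipschitz bound $|\Lam(\ph_1)-\Lam(\ph_2)|\le A\laminfty|\ph|$ gives, after Young's inequality on the mean-correction, the coercive lower bound $\tfrac{A\alpha_0}{2}\|\ph\|^2 - C|\mean\ph|^2$. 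The convolution contribution splits via $m(\ph_1)\nabla J\star\ph_1 - m(\ph_2)\nabla J\star\ph_2 = m(\ph_1)(\nabla J\star\ph) + (m(\ph_1)-m(\ph_2))\nabla J\star\ph_2$, which by the Lipschitz continuity of $m$ from \eqref{ass:cts:m:n}, $|\ph_i|\le 1$ from \eqref{uniformbound}, and \eqref{conv-op} is controlled by $C\|\ph\|\|\tilde\ph\|_{V^*_0}$.

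The main obstacle is the reaction contribution $\iO R\mathcal{N}\tilde\ph$: expanding $R$ through \eqref{ass:cts:P:F} produces a cross term $\ph\cdot\s_2$ whose $H$-norm is not readily bounded proportionally to $\|\ph\|$ with an $L^1(0,T)$ coefficient, particularly in $d=3$. I would bypass this by estimating $R$ directly in the dual norm: for $v\in V$, H\"older and the embedding $V\hookrightarrow L^4(\Omega)$ (valid for $d\in\{2,3\}$) give $\iO|\ph\s_2 v|\le\|\ph\|\|\s_2\|_4\|v\|_4\le C\|\ph\|\|\s_2\|_4\|v\|_V$, whence $\|R\|_{V^*}\le C(\|\ph\|+\|\s\|) + C\|\ph\|\|\s_2\|_4$. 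The duality pairing then yields $|\iO R\mathcal{N}\tilde\ph|\le C(\|\ph\|+\|\s\|+\|\ph\|\|\s_2\|_4)\|\tilde\ph\|_{V^*_0}$, and Young's inequality absorbs $\|\ph\|^2$ into the coercive term while leaving a coefficient $C(1+\|\s_2\|_4^2)$ multiplying $\|\tilde\ph\|_{V^*_0}^2$. Crucially, the Gagliardo--Nirenberg inequality \eqref{e19} combined with the a priori regularity $\s_2\in L^\infty(0,T;H)\cap L^2(0,T;V)$ inherited from Theorem \ref{thm:weak} ensures $\|\s_2\|_4^2\in L^1(0,T)$ in both dimensions; the analogous reaction term $\iO R\mathcal{N}\tilde\s$ in the nutrient equation is handled identically, absorbing into $\|\tilde\s\|^2$.

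Finally, the mean parts are closed via the scalar ODEs obtained by taking $v=w=1$: integration wipes out the diffusion and convolution contributions, leaving $|\Omega|\tfrac{d}{dt}\mean\ph = \iO R = -|\Omega|\tfrac{d}{dt}\mean\s$, from which $\mean\ph+\mean\s$ is conserved and Gronwall-type control on $|\mean\ph|^2+|\mean\s|^2$ follows from the $V^*$-bound on $R$. Adding all inequalities for $Y(t) := \|\tilde\ph\|_{V^*_0}^2 + \|\tilde\s\|_{V^*_0}^2 + |\mean\ph|^2 + |\mean\s|^2$ and applying Gronwall's inequality with integrable coefficient $h(t)\sim 1+\|\s_2\|_4^2$ yields $\sup_{[0,T]} Y \le C\,Y(0)$, which gives the $L^\infty(0,T;V^*)$ part of the claim; integrating the absorbed coercive terms $\tfrac{A\alpha_0}{2}\|\ph\|^2$ and $\tfrac12\|\tilde\s\|^2$ then produces the $L^2(0,T;H)$ part, with $K_2$ depending only on $\Omega$, $T$, and the structural data (via the a priori constant $K_1$).
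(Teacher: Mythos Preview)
The paper does not prove this theorem; it is stated in Section~\ref{sec:setting} as a result recalled from \cite{FLR}, so there is no in-paper proof to compare against. Your approach---testing against $\mathcal{N}\tilde\ph$ and $\mathcal{N}\tilde\s$, exploiting the monotonicity of $\Lambda$ for coercivity in $H$, and closing via Gronwall with coefficient $1+\|\s_2\|_4^2\in L^1(0,T)$---is the standard $V^*$-energy method for degenerate Cahn--Hilliard systems and is correct in outline. A couple of points worth tightening: when you Young-split the reaction term $\|R\|_{V^*}\|\tilde\ph\|_{V^*_0}$, the piece $\|\s\|\,\|\tilde\ph\|_{V^*_0}$ produces $\eps\|\s\|^2$, which you must write as $\eps\|\tilde\s\|^2 + C\eps|\mean\s|^2$ before absorption (the former into the nutrient dissipation, the latter into $Y$); and in the mean ODE, the cross term $|\mean\ph|\,\|\ph\|\,\|\s_2\|_4$ should be split as $\eps\|\ph\|^2 + C\|\s_2\|_4^2|\mean\ph|^2$ so that the large coefficient multiplies $Y$ rather than the dissipation. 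With these bookkeeping adjustments the argument closes as you describe.
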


\begin{rem}
{\upshape We point out that due to our choice of the non-local Ginzburg--Landau energy $\mathcal{F}$ in \eqref{nonloc:GL}, in the notation of \cite{FLR} we have $F_2 = 0$, $a(x) = 0$ and $F_1 = F$.  Hence, we can simplify several assumptions for well-posedness.
}
\end{rem}

\section{Strong well-posedness}\label{sec:strong}
Further regularity for the weak solution to $(\mathrm{P})$ can be established with a more regular convolution
kernel $J$.  For instance, the assumption $J\in W^{2,1}_{\rm loc}(\mathbb{R}^d)$ would be sufficient from an analytical point of view.  However, as
pointed out in \cite{FGG2}, this assumption excludes the physically relevant cases of Newtonian and
Bessel potential kernels. A way to \last{overcome} this issue is to assume that $J$ is {\em admissible} in the following sense:
\begin{defn}\label{defn:kernels}
A convolution kernel $J\in W^{1,1}_{\rm loc}(\mathbb{R}^d)$ is said to be admissible if it fulfils the following conditions:
\begin{itemize}
\item $J\in C^3(\mathbb{R}^d \setminus \{ 0 \})$.
\item $J$ is radially symmetric, i.e. $J(x)=\widetilde{J}(|x|)$ for a non-increasing function $\widetilde J$.
\item ${\widetilde J}'' (r) $ and ${\widetilde J}'(r)/r $ are monotone on $(0,r_0)$ for some $r_0>0$.
\item \last{There exists some positive constant $C_d$} such that $|D^3J(x)|\leq C_d|x|^{-d-1}$ \last{for all $x \neq 0$}.
\end{itemize}
\end{defn}
For strong well-posedness, we reinforce previous assumptions by assuming that:
\begin{enumerate}[label=$(\mathrm{C \arabic*})$, ref = $\mathrm{C \arabic*}$, series=C]
\item \label{ass:str:m:n} $m, n \in C^1([-1,1])$.
\item \label{ass:str:F:lam}  $F \in C^3 (-1,1)$ and $\lam \in C^1([-1,1])$.
\item \label{ass:str:J} $J\in W^{2,1}_{\rm loc}(\mathbb{R}^d)$ or $J$ is admissible in
the sense of Definition~\ref{defn:kernels}.
\item \label{ass:str:P} $P, PF' \in C^1([-1,1])$.
\end{enumerate}

\begin{thm}\label{thm:strong}
Assume that \eqref{ass:m:F}-\eqref{ass:ini} and \eqref{ass:str:m:n}-\eqref{ass:str:J} are satisfied for $d \in \{2,3\}$, and $\ph_0\in V$.  Then, there exists a weak solution $[\ph,\s]$ to $(\mathrm{P})$ which exhibits the additional regularity
\begin{align}\label{imp_reg1}
\ph  &\in H^1(0,T;H) \cap L^\infty(0,T;V) \cap L^2(0,T;W).
\end{align}
Furthermore, if $\s_0 \in V$, and assuming that $n=1$ when $d=3$,  it holds that
\begin{align}\label{imp_reg2}
\s \in H^1(0,T;H) \cap L^\infty(0,T;V) \cap L^2(0,T;W).
\end{align}
Lastly, for $d \in\{2, 3\}$ and $n=1$, if $\s_0 \in W$ with \last{$\dn \s_0 = 0$ on $\Gamma$} and \eqref{ass:str:P} also hold, then
\begin{align}\label{imp_reg3}
\s \in W^{1,\infty}(0,T;H) \cap H^1(0,T;V) \cap L^\infty(0,T;W).
\end{align}
\end{thm}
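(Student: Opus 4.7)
The plan is a three-step bootstrap starting from the weak solution $(\ph,\s)$ of Theorem~\ref{thm:weak}: first improve the regularity of $\ph$ treating $\s$ as a given forcing in \eqref{eq_first}, then lift $\s$ by means of the new bounds on $\ph$ in \eqref{eq_second}, and finally, under $\dn\s_0=0$ and \eqref{ass:str:P}, upgrade $\s$ once more by differentiating in time. All the formal energy identities below are to be justified on a time-discretisation of \eqref{eq_first}--\eqref{eq_second} in the spirit of \cite{FGGS}, with uniform bounds in the discretisation parameter passed to the limit; I rely throughout on the pointwise bound $|\ph|\leq 1$ from \eqref{uniformbound}.

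\emph{Step 1 (proof of \eqref{imp_reg1}).} Test \eqref{eq_first} formally with $\dt\ph$. Upon integration by parts the boundary contributions from $-\Delta\Lam(\ph)\,\dt\ph$ and from $-B\,\mathrm{div}(m(\ph)\nabla J\star\ph)\,\dt\ph$ cancel exactly thanks to \eqref{bc}, and using \eqref{prop_lam} I obtain
\begin{align*}
\iO|\dt\ph|^2 + \frac{d}{dt}\iO\frac{A\lam(\ph)}{2}|\nabla\ph|^2 &= \iO\frac{A\lam'(\ph)}{2}\dt\ph|\nabla\ph|^2 + B\iO m(\ph)(\nabla J\star\ph)\cdot\nabla\dt\ph \\
&\quad + \iO P(\ph)(\s - AF'(\ph) + BJ\star\ph)\dt\ph.
\end{align*}
The lower bound $\lam\geq\alpha_0$ from \eqref{ass:m:F} produces $\|\nabla\ph\|^2$ on the left; the hypothesis $\ph_0\in V$ gives a finite initial value. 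The term with $\nabla\dt\ph$ is handled by integrating by parts \emph{in time}, producing a boundary-in-time contribution absorbable by Young and integrands of type $m'(\ph)\dt\ph(\nabla J\star\ph)\cdot\nabla\ph$ and $m(\ph)(\nabla J\star\dt\ph)\cdot\nabla\ph$ controlled via \eqref{conv-op}, \eqref{ass:str:m:n} and Young. The forcing is bounded by $\|P\|_\infty$, $\|PF'\|_\infty$, and $\s\in L^\infty(H)$. The delicate term $\int\lam'(\ph)\dt\ph|\nabla\ph|^2$ is estimated via \eqref{e19} by interpolating $\|\nabla\ph\|_{L^4}$, producing $\|\ph\|_W$ on the right. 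This $\|\ph\|_W$ is in turn absorbed by reading \eqref{eq_first} as the Neumann elliptic problem $-\Delta\Lam(\ph)=g$ with $g\in L^2(H)$: under \eqref{ass:str:J} elliptic regularity gives $\Lam(\ph)\in L^2(W)$, and via $\lam\in C^1([-1,1])$ with $\lam\geq\alpha_0$ together with $\Delta\ph = \Delta\Lam(\ph)/(A\lam(\ph))+\lam'(\ph)|\nabla\ph|^2/\lam(\ph)$, also $\ph\in L^2(W)$. A joint Gronwall argument then closes the $H^1(H)\cap L^\infty(V)\cap L^2(W)$-estimate; in $d=3$ the interpolation is sharper and the elliptic bootstrap must be iterated, but the same scheme suffices.

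\emph{Step 2 (proof of \eqref{imp_reg2}).} With \eqref{imp_reg1} and $\s_0\in V$ at hand, test \eqref{eq_second} with $\dt\s$. Since $n\geq n_*>0$ by \eqref{ass:n}, $n(\ph)\dn\s=0$ reduces to $\dn\s=0$, no boundary term appears, and the identity reads
\begin{align*}
\iO|\dt\s|^2 + \frac{d}{dt}\iO\frac{n(\ph)}{2}|\nabla\s|^2 = \iO\frac{n'(\ph)}{2}\dt\ph|\nabla\s|^2 - \iO P(\ph)(\s - AF'(\ph) + BJ\star\ph)\dt\s.
\end{align*}
In $d=2$ the cross term $\int n'(\ph)\dt\ph|\nabla\s|^2$ is handled via \eqref{interpol_abs_2d}, the Step~1 bounds on $\ph$ and $\dt\ph\in L^2(H)$, and Young; an elliptic estimate for $-n(\ph)\Delta\s$ produces the $\|\s\|_W$ needed to close by Gronwall. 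In $d=3$ the 2D interpolation is too weak, hence the restriction $n\equiv 1$, whereupon \eqref{eq_second} becomes $\dt\s-\Delta\s\in L^2(H)$ and standard parabolic Neumann regularity furnishes \eqref{imp_reg2}.

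\emph{Step 3 (proof of \eqref{imp_reg3}) and main obstacle.} Under $n\equiv1$, $\s_0\in W$ with $\dn\s_0=0$, and \eqref{ass:str:P}, I would differentiate \eqref{eq_second} in time,
\begin{align*}
\dtt\s - \Delta\dt\s = -\dt\bigl[P(\ph)(\s - AF'(\ph) + BJ\star\ph)\bigr],
\end{align*}
and test with $\dt\s$. The initial datum $\dt\s(0)=\Delta\s_0 - P(\ph_0)(\s_0 - AF'(\ph_0) + BJ\star\ph_0)$ lies in $H$ by the compatibility $\dn\s_0=0$, $\s_0\in W$, and $PF'\in C^0([-1,1])$. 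Expanding the derivative, \eqref{ass:str:P} and the identity $P'F'+PF''=(PF')'$ turn the right-hand side into a linear combination of $\dt\ph$ and $\dt\s$ with $L^\infty(Q)$-coefficients, plus trilinear products involving $\s$ and $J\star\dt\ph$; the latter are bounded using $\|\s\|_{L^4}\leq C\|\s\|_V$ for $d\leq 3$ and Young, the former via Steps~1--2. Gronwall yields $\dt\s\in L^\infty(H)\cap L^2(V)$, and rereading \eqref{eq_second} as $-\Delta\s$ equal to an $L^\infty(H)$-source gives $\s\in L^\infty(W)$ by elliptic regularity. The principal obstacle is Step~1: the term $\int\lam'(\ph)\dt\ph|\nabla\ph|^2$ requires exactly the $L^2(W)$-bound it is supposed to produce, so the Gronwall closure and the elliptic regularity must be executed simultaneously on a time-discretisation with bounds uniform in the step. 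The secondary obstacle, specific to $d=3$, is the absence of a sufficiently strong Sobolev interpolation for $n'(\ph)\dt\ph|\nabla\s|^2$, which is why $n\equiv 1$ is enforced in three dimensions.
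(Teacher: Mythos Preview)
Your Steps~2 and~3 are essentially sound and close to the paper's arguments (the paper tests \eqref{eq_second} with $-\Delta\s$ rather than $\dt\s$ in Step~2, but your choice can also be closed once Step~1 is in hand; Step~3 matches). The genuine gap is in Step~1, and you have correctly identified it yourself as the ``principal obstacle'' without resolving it.

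Testing \eqref{eq_first} with $\dt\ph$ produces the cubic term $\int\lam'(\ph)\,\dt\ph\,|\nabla\ph|^2$, which after Gagliardo--Nirenberg needs $\|\ph\|_W$ on the right. You propose to supply $\|\ph\|_W$ by elliptic regularity applied to $-\Delta\Lam(\ph)=g$ and then passing from $\Lam(\ph)\in W$ to $\ph\in W$. But both steps are circular: first, $g$ contains $\dt\ph$ itself, so $\|\Lam(\ph)\|_W\leq C(\|\dt\ph\|+1)$; second, the identity $A\partial_{ij}^2\ph=\lam(\ph)^{-1}\partial_{ij}^2\Lam(\ph)-\lam'(\ph)\lam(\ph)^{-1}\partial_i\ph\,\partial_j\ph$ forces $\|\ph\|_W\leq C\|\Lam(\ph)\|_W+C\|\nabla\ph\|_4^2\leq C\|\Lam(\ph)\|_W+C\|\nabla\ph\|\,\|\ph\|_W$, which cannot be absorbed without a smallness assumption on $\|\nabla\ph\|$. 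Discretising in time does not break this loop: the same structure reappears at the discrete level.

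The paper avoids the cubic term altogether by testing not with $\dt\ph$ but with $\dt\Lam(\ph)$ (discretely, with $\Lam_\eps(\ph_{k+1})-\Lam_\eps(\ph_k)$). Then $(\dt\ph,\dt\Lam(\ph))=\int A\lam(\ph)\,|\dt\ph|^2\geq A\alpha_0\|\dt\ph\|^2$ by \eqref{ass:m:F}, and $(\nabla\Lam(\ph),\nabla\dt\Lam(\ph))=\tfrac12\tfrac{d}{dt}\|\nabla\Lam(\ph)\|^2$ is an exact derivative; no $\lam'$ appears. This yields $\dt\ph\in L^2(0,T;H)$ and $\nabla\Lam(\ph)\in L^\infty(0,T;H)$ \emph{first}, independently of any $W$-bound. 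Only afterwards does one read off $\Lam(\ph)\in L^2(0,T;W)$ by elliptic regularity, and then $\ph\in L^2(0,T;W)$ via interpolation (in $d=2$ directly through $\|\nabla\Lam(\ph)\|_{L^4(L^4)}$; in $d=3$ by the bootstrap \eqref{sergio:1}--\eqref{sergio:4} at the continuous level). The order of operations---$\dt\ph$ before $\ph\in W$---is what breaks your circularity, and the choice of test function $\dt\Lam(\ph)$ rather than $\dt\ph$ is the missing key step.
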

We are also able to prove a stronger regularity result.
\begin{thm}\label{thm:superstrong}
Let $\ph_0,\s_0 \in W$, with $\dn\s_0 = 0$ \last{ on $\Gamma$} and
\begin{align}
[\nabla \Lam(\ph_0)-Bm(\ph_0)(\nabla J \star \ph_0)]\cdot \bold{n} = 0 \text{ on } \Gamma.\label{nonlinBC}
\end{align}
Assume \eqref{ass:m:F}-\eqref{ass:ini} and \eqref{ass:str:m:n}-\eqref{ass:str:P} hold, and in addition
$\lambda= m F'' = \alpha_0$ is a constant and $n=1$ for the case $d = 3$.  Then, there exists a weak solution $[\ph,\s]$ to $(\mathrm{P})$
which, in addition to the regularities obtained by Theorem \ref{thm:strong}, exhibits the additional regularity
\begin{align}\label{imp_reg4}
\ph \in W^{1,\infty} (0,T;H) \cap H^1(0,T;V) \cap L^\infty(0,T;W), \quad \Lam(\ph) \in L^\infty(0,T;W),
\end{align}
and for the case $d = 2$, the regularity \eqref{imp_reg3} also hold for $\s$ without the previous restriction on the nutrient mobility $n$. Moreover, there exists a positive constant $K_3$ which depends only on
$\Omega$, $T$, $J$, and on the data of the system such that
\begin{align}
& \|\ph\|_{W^{1,\infty} (0,T;H) \cap H^1(0,T;V) \cap L^\infty(0,T;W)}
+\| \s \|_{W^{1,\infty} (0,T;H) \cap H^1(0,T;V) \cap L^\infty(0,T;W)} \leq K_3.
\label{reg_superstrong}
\end{align}
\end{thm}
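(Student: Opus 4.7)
The plan is to build on the approximation scheme employed in the proof of Theorem \ref{thm:strong} and derive additional uniform estimates that survive the limit passage. The crucial simplification afforded by the hypothesis $\lam\equiv\alpha_0$ is that $\Lam(\ph)=A\alpha_0\ph$, whence \eqref{eq_first} reduces to the semilinear parabolic equation
\begin{align*}
\dt\ph-A\alpha_0\Delta\ph=-B\,\div\bigl(m(\ph)(\nabla J\star\ph)\bigr)+P(\ph)\bigl(\s-AF'(\ph)+BJ\star\ph\bigr)
\end{align*}
together with the Neumann-type condition $[A\alpha_0\nabla\ph-Bm(\ph)(\nabla J\star\ph)]\cdot\bold n=0$ on $\Sigma$. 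Working at the level of the smooth approximations so that all computations are legitimate, I would formally differentiate both the equation and its boundary condition in time. Setting $\psi:=\dt\ph$, the differentiated boundary condition $[A\alpha_0\nabla\psi-B\dt(m(\ph)(\nabla J\star\ph))]\cdot\bold n=0$ on $\Sigma$ ensures that, when the time-differentiated equation is tested with $\psi$ and the Laplacian term is integrated by parts, the boundary contribution cancels exactly.

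The resulting energy identity reads
\begin{align*}
\tfrac12\tfrac{d}{dt}\|\psi\|^2+A\alpha_0\|\nabla\psi\|^2=B\iO\dt\bigl(m(\ph)(\nabla J\star\ph)\bigr)\cdot\nabla\psi+\iO\dt\bigl(P(\ph)(\s-AF'(\ph)+BJ\star\ph)\bigr)\psi.
\end{align*}
All coefficients appearing on the right-hand side are uniformly bounded: $m'(\ph)$ and $P'(\ph)$ lie in $L^\infty$ by \eqref{ass:str:m:n} and \eqref{ass:str:P}; the product $P(\ph)F''(\ph)=\alpha_0P(\ph)/m(\ph)$ is bounded because $\sqrt P\leq k\,m$ near $\pm1$ by \eqref{ass:P}; and the convolution factors are controlled by \eqref{conv-op}. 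Combining Cauchy--Schwarz, Young, and the Gagliardo--Nirenberg inequality (in the form \eqref{interpol_abs_2d} for $d=2$ and its three-dimensional analogue otherwise), the right-hand side can be absorbed into $A\alpha_0\|\nabla\psi\|^2$ modulo terms of the form $\mathbb{Q}(\|\ph\|_V,\|\s\|_V)(\|\psi\|^2+\|\dt\s\|^2)$, which are integrable in time thanks to Theorems \ref{thm:weak} and \ref{thm:strong}. Gronwall's lemma then yields $\dt\ph\in L^\infty(0,T;H)\cap L^2(0,T;V)$, provided the initial value $\psi(0)\in H$: this is obtained by reading the equation at $t=0$ and using $\ph_0\in W$ together with the compatibility condition \eqref{nonlinBC}, which is exactly what is needed to make the spatial terms combine into an $H$-function. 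Finally, reinterpreting the equation as an elliptic Neumann problem with right-hand side in $L^\infty(0,T;H)$ and Neumann datum in $L^\infty(0,T;H^{1/2}(\Gamma))$---the divergence of the convolution term being treated via $J\in W^{2,1}_{\rm loc}$ directly, or via the singular-integral estimates of \cite{FGG2} when $J$ is admissible---standard $H^2$-elliptic regularity produces $\ph\in L^\infty(0,T;W)$, and hence also $\Lam(\ph)\in L^\infty(0,T;W)$.

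For the improved nutrient regularity \eqref{imp_reg3} in the unrestricted two-dimensional case, I would replay the same program for \eqref{eq_second}: differentiate in time at the approximate level, test with $\dt\s$, and control the term arising from $\dt(n(\ph)\nabla\s)\cdot\nabla\dt\s$ by coupling the newly-obtained regularity $\ph\in L^\infty(0,T;W)\hookrightarrow L^\infty(Q)$ and $\dt\ph\in L^\infty(0,T;H)$ with the Agmon inequality \eqref{agmon} and the interpolation \eqref{interpol_abs_2d}. This is exactly where the two-dimensional setting becomes essential: in $d=3$ these interpolations no longer close for a concentration-dependent mobility, forcing the restriction $n=1$, under which the principal part of \eqref{eq_second} is constant and the argument proceeds verbatim. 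The compatibility $\dn\s_0=0$ together with $\s_0\in W$ give $\dt\s(0)\in H$ as required for Gronwall, after which elliptic regularity promotes the bound to $\s\in L^\infty(0,T;W)$.

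The main obstacle is the rigorous justification of the time-differentiation step, since the identities above do not make sense at the regularity level provided by Theorem \ref{thm:strong} alone. Following the philosophy of \cite{FGGS}, one must execute all the estimates at the level of the approximation scheme used for Theorem \ref{thm:strong}, where the solutions are smooth and the manipulations are literal, and then verify that every constant is independent of the regularization parameter so that the bounds persist under the limit passage. The hypotheses $\lam\equiv\alpha_0$, $\ph_0,\s_0\in W$, and \eqref{nonlinBC} are precisely what is needed to keep the bounds uniform; collecting them yields the announced estimate \eqref{reg_superstrong}.
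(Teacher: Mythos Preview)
You have misread the hypotheses of the theorem. The extra assumptions $\lambda\equiv\alpha_0$ and $n=1$ are imposed \emph{only} in dimension $d=3$; the two-dimensional statement must be proved for a general variable $\lambda\in C^1([-1,1])$ satisfying \eqref{ass:m:F} and \eqref{ass:str:F:lam}. Your entire argument for the $\ph$-regularity rests on the reduction $\Lam(\ph)=A\alpha_0\ph$, which turns \eqref{eq_first} into a semilinear equation with constant diffusion. That reduction is simply unavailable in the harder case $d=2$, so as written your proof covers only the three-dimensional part of the theorem.

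For $d=2$ with variable $\lambda$, the paper works at the time-discrete level already set up in the proof of Theorem~\ref{thm:strong}: one takes differences of \eqref{eps_approx} between consecutive steps and tests with $\pdt\ph_{k+1}$. The principal term is then $\sum_k(\nabla\Lam_\eps(\ph_{k+1})-\nabla\Lam_\eps(\ph_k),\nabla\pdt\ph_{k+1})$, and the crux of the argument is the lower bound (borrowed from \cite[(4.71)]{FGGS})
\[
\tau\sum_{k}\bigl(\nabla\pdt\Lam_\eps(\ph_{k+1}),\nabla\pdt\ph_{k+1}\bigr)\ \ge\ \tfrac{\alpha_0\tau}{4}\sum_{k}\|\nabla\pdt\ph_{k+1}\|^2-C\tau\sum_{k}\|\Lam_\eps(\ph_k)\|_W^2\|\pdt\ph_{k+1}\|^2-C\tau\sum_{k}\|\pdt\ph_{k+1}\|^2,
\]
which extracts the dissipation at the cost of an error governed by $\|\Lam_\eps(\ph_k)\|_W^2$. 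One then shows $\tau\|\Lam_\eps(\ph_k)\|_W^2\le C\tau^{-1}\|\ph_k-\ph_{k-1}\|^2+C\tau$ via elliptic regularity and the equation at step $k$, so that the discrete Gronwall lemma closes. This two-dimensional mechanism---essentially a commutator estimate for $\lambda$---is precisely what your simplification $\Lam(\ph)=A\alpha_0\ph$ sidesteps, and it is the missing ingredient in your proposal.

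For $d=3$ your formal time-differentiation is in spirit the same as the paper's (the paper explicitly notes that when $\lambda$ is constant the cross term collapses to $A\alpha_0\tau\sum_k\|\nabla\pdt\ph_{k+1}\|^2$), and your treatment of the nutrient equation in $d=2$ with variable $n$ matches the paper's argument as well.
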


We point out that $\lambda = m F''$ being a constant for the assumption of Theorem \ref{thm:superstrong} implies $\Lambda(s) = A \alpha_0 s$.  This does not take away the combination of degenerate mobility and singular potential from the non-local model.

Next, we present two improvements of the continuous dependence results of \cite{FLR} (see Theorem \ref{thm:weakcts}), where due to the improved regularity for $\ph$ we can consider a non-constant mobility $n(\ph)$ in the case $d = 2$.  This fact is new with respect to \cite{FLR}, where the regularity of the weak solution confines the analysis to the case of constant mobility $n = 1$.
The first improvement is a weak-strong continuous dependence result.

\begin{thm}\label{thm:cts1}
Assume that \eqref{ass:m:F}--\eqref{ass:P} and \eqref{ass:str:m:n}--\eqref{ass:str:P} are satisfied for $d\in \{2,3\}$. For $d=3$, suppose in addition
that $\lambda$ is a constant and $n=1$. Assume
that initial data $[\ph_{0,i}, \s_{0,i}]$, for $i = 1,2$, are given such that
$[\ph_{0,1},\s_{0,1}]\in V\times V$ and $[\ph_{0,2},\s_{0,2}]\in H\times H$ (with
$\ph_{0,1},\ph_{0,2}$ satisfying also \eqref{ass:ini}). Let $[\ph_1,\s_1]$, and $[\ph_2,\s_2]$ be the corresponding
solutions, given by Theorem \ref{thm:strong}, and by Theorem \ref{thm:weak}, respectively.
Then, there exists a positive constant $K_4$ which
depends only on $\Omega$, $T$, $J$, and on the data of the system
such that
\begin{equation}\label{cts1}
\begin{aligned}
 &\|\ph_1-\ph_2\|_{{ H^1(0,T;V^*)\cap L^2(0,T;V)}} + \| \s_1 - \s_2 \|_{{ H^1(0,T;V^*)\cap L^2(0,T;V)}} \\
& \quad \leq K_4 \Big ( \| \ph_{0,1} - \ph_{0,2} \| + \| \s_{0,1} - \s_{0,2} \|	\Big ).
\end{aligned}
\end{equation}
\end{thm}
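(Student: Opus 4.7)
My plan is to set $\ph := \ph_1 - \ph_2$ and $\s := \s_1-\s_2$, subtract the weak formulations \eqref{W_first}--\eqref{W_second} written for $[\ph_i,\s_i]$, and test the two resulting difference equations with $\ph,\s \in L^2(0,T;V)$, respectively. Since $\ph,\s \in H^1(0,T;V^*)\cap L^2(0,T;V) \hookrightarrow C([0,T];H)$, the standard chain rule puts $\tfrac12\tfrac{d}{dt}\|\ph\|^2$ and $\tfrac12\tfrac{d}{dt}\|\s\|^2$ on the left-hand sides. The target is a differential inequality of the form
\begin{align*}
\frac{d}{dt}\bigl(\|\ph\|^2+\|\s\|^2\bigr) + c\bigl(\|\nabla\ph\|^2+\|\nabla\s\|^2\bigr) \leq g(t)\bigl(\|\ph\|^2+\|\s\|^2\bigr),
\end{align*}
with some $g\in L^1(0,T)$ depending only on the data and on the strong-solution norms of $[\ph_1,\s_1]$ furnished by Theorem \ref{thm:strong}. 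Gronwall's lemma will then give the $L^\infty(0,T;H)\cap L^2(0,T;V)$ half of \eqref{cts1}, and the $H^1(0,T;V^*)$ bound will be recovered by testing the difference equations against arbitrary $v,w \in V$.

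The main obstacle is the quasi-linear diffusion term in the $\ph$ equation, because a priori $\nabla\ph_2$ is only square integrable in space. I would bypass this by rewriting
\begin{align*}
\iO \nabla(\Lam(\ph_1)-\Lam(\ph_2))\cdot\nabla\ph = A\iO \lam(\ph_2)|\nabla\ph|^2 + A\iO \bigl(\lam(\ph_1)-\lam(\ph_2)\bigr)\,\nabla\ph_1\cdot\nabla\ph,
\end{align*}
so that the less regular factor $\nabla\ph_2$ is swapped for $\nabla\ph_1$; the first integral is $\geq A\alpha_0\|\nabla\ph\|^2$ by \eqref{ass:m:F}. For the remainder, $\lam\in C^1([-1,1])$ from \eqref{ass:str:F:lam} yields $|\lam(\ph_1)-\lam(\ph_2)|\leq L_\lam |\ph|$, while $\ph_1 \in L^\infty(0,T;V)\cap L^2(0,T;W)$ from Theorem \ref{thm:strong} combined with the Gagliardo--Nirenberg inequality \eqref{e19} produces $\|\nabla\ph_1\|_4 \in L^4(0,T)$ in dimension two; then \eqref{interpol_abs_2d} and Young's inequality yield
\begin{align*}
\Bigl|\iO \bigl(\lam(\ph_1)-\lam(\ph_2)\bigr)\nabla\ph_1\cdot\nabla\ph\Bigr| \leq \tfrac{A\alpha_0}{2}\|\nabla\ph\|^2 + C\bigl(\|\nabla\ph_1\|_4^2+\|\nabla\ph_1\|_4^4\bigr)\|\ph\|^2,
\end{align*}
with the prefactor of $\|\ph\|^2$ in $L^1(0,T)$ as required. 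In dimension three, the hypothesis $\lam\equiv \alpha_0$ makes the remainder vanish identically. The diffusion term in the $\s$ equation is treated via the twin identity $n(\ph_1)\nabla\s_1 - n(\ph_2)\nabla\s_2 = n(\ph_2)\nabla\s + (n(\ph_1)-n(\ph_2))\nabla\s_1$, exploiting $\s_1 \in L^2(0,T;W)$ in 2D, whereas in 3D the assumption $n\equiv 1$ kills the cross term altogether.

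The remaining forcing contributions consist of products of Lipschitz functions ($m,n,P,PF'$ by \eqref{ass:str:m:n} and \eqref{ass:str:P}) and convolutions; crucially the singular $F'$ is never isolated, since $PF'$ itself is Lipschitz. I would decompose each difference of the form $H(\ph_1)G_1 - H(\ph_2)G_2 = H(\ph_2)(G_1-G_2) + (H(\ph_1)-H(\ph_2))G_1$ so that the less regular factor $G_2 \in \{\s_2, J\star\ph_2, \nabla J\star\ph_2\}$ is multiplied by a Lipschitz increment; H\"older's inequality, the convolution bounds \eqref{conv-op}, the uniform bound $|\ph_i|\leq 1$, \eqref{interpol_abs_2d}, and Young's inequality then make each such term either absorbable into the diffusion terms or bounded by $(\|\ph\|^2+\|\s\|^2)$ times an $L^1(0,T)$-coefficient. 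Summing the two inequalities and invoking Gronwall yields the $L^\infty(0,T;H)\cap L^2(0,T;V)$ part of \eqref{cts1}. To close, I would then read $\<\dt\ph,v>_V$ and $\<\dt\s,w>_V$ off the difference equations for arbitrary $v,w\in V$ and estimate each term in the $V^*$-norm using the same decompositions; together with the just-proved $L^\infty(0,T;H)\cap L^2(0,T;V)$ bounds on $\ph,\s$, this delivers $\|\dt\ph\|_{L^2(0,T;V^*)}+\|\dt\s\|_{L^2(0,T;V^*)} \leq K_4(\|\ph_{0,1}-\ph_{0,2}\|+\|\s_{0,1}-\s_{0,2}\|)$, completing \eqref{cts1}.
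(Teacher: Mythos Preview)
Your proposal is correct and follows essentially the same approach as the paper's proof: the same rewriting $\nabla(\Lam(\ph_1)-\Lam(\ph_2)) = A\lam(\ph_2)\nabla\ph + A(\lam(\ph_1)-\lam(\ph_2))\nabla\ph_1$ to place the strong solution's gradient in the cross term, the analogous decomposition for the nutrient diffusion, the Lipschitz continuity of $PF'$ to avoid isolating the singular $F'$, the Gagliardo--Nirenberg/Young absorption in $d=2$ with the Gronwall coefficient furnished by $\ph_1,\s_1\in L^\infty(0,T;V)\cap L^2(0,T;W)$, and the observation that for $d=3$ the assumptions $\lam\equiv\alpha_0$, $n\equiv 1$ remove the problematic cross terms. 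The paper packages the Gronwall weight as $C(1+\|\ph_1\|_{W}^2+\|\s_1\|_{W}^2)$ rather than your $C(1+\|\nabla\ph_1\|_4^4+\dots)$, but these are equivalent via \eqref{interpol_abs_2d} and the $L^\infty(0,T;V)$ bound.
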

Theorem \ref{thm:weakcts} and Theorem \ref{thm:cts1} entail uniqueness of the solution
to Problem (P). More precisely, we have the following
\begin{cor}\label{cor:uniq}
Assume that \eqref{ass:m:F}--\eqref{ass:ini} and \eqref{ass:str:m:n}--\eqref{ass:str:P} are satisfied for $d\in \{2,3\}$. For $d=3$ suppose in addition
that $\lambda$ is a constant and that $n=1$. Let the initial data satisfy one of the following
conditions: $(i)$ $\,[\ph_0,\s_0]\in H\times H$, if $n=1$; $(ii)$ $\,[\ph_0,\s_0]\in V\times V$.
Then, the solution to Problem $(P)$ given by Theorem \ref{thm:weak} and by Theorem \ref{thm:strong}, respectively,
is unique.
\end{cor}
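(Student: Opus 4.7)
The plan is to treat the two cases separately, invoking Theorem \ref{thm:weakcts} for case (i) and Theorem \ref{thm:cts1} for case (ii). In both instances the uniqueness follows by specializing the relevant continuous dependence estimate to two solutions sharing the same initial datum.

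For case (i), where $n=1$ and $[\ph_0,\s_0]\in H\times H$, I would first observe that the enhanced regularity assumptions \eqref{ass:str:m:n}--\eqref{ass:str:P} imply the Lipschitz requirements \eqref{ass:cts:m:n}--\eqref{ass:cts:P:F} of Theorem \ref{thm:weakcts} (indeed $C^1([-1,1]) \hookrightarrow C^{0,1}([-1,1])$ on a bounded interval, and $n=1$ is assumed). Given any pair of weak solutions $[\ph_1,\s_1]$ and $[\ph_2,\s_2]$ coming from Theorem \ref{thm:weak} with common initial datum $[\ph_0,\s_0]$, Theorem \ref{thm:weakcts} yields
\begin{align*}
\|\ph_1-\ph_2\|_{L^\infty(0,T;V^*)\cap L^2(0,T;H)} + \|\s_1-\s_2\|_{L^\infty(0,T;V^*)\cap L^2(0,T;H)} \leq K_2 \bigl(\|\ph_{0,1}-\ph_{0,2}\|_{V^*} + \|\s_{0,1}-\s_{0,2}\|_{V^*}\bigr) = 0,
\end{align*}
so that $[\ph_1,\s_1] = [\ph_2,\s_2]$ in $L^\infty(0,T;V^*) \cap L^2(0,T;H)$, and in particular a.e.~in $Q$.

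For case (ii), with $[\ph_0,\s_0]\in V\times V$, the key point is that Theorem \ref{thm:strong} is applicable: the assumption $\ph_0\in V$ gives the strong $\ph$-regularity \eqref{imp_reg1}, while $\s_0\in V$, together with $n=1$ whenever $d=3$ (which is already part of the corollary's hypothesis), yields the strong $\s$-regularity \eqref{imp_reg2}. Call $[\ph^*,\s^*]$ the resulting strong solution. Let $[\ph,\s]$ be an arbitrary weak solution from Theorem \ref{thm:weak} with the same initial datum. Since the hypotheses of Theorem \ref{thm:cts1} are met under either the $d=2$ clause or the $d=3$ clause ($\lambda$ constant and $n=1$), I would apply it with $[\ph_1,\s_1] := [\ph^*,\s^*]$ (the strong one) and $[\ph_2,\s_2] := [\ph,\s]$ (the weak one), getting
\begin{align*}
\|\ph^*-\ph\|_{H^1(0,T;V^*)\cap L^2(0,T;V)} + \|\s^*-\s\|_{H^1(0,T;V^*)\cap L^2(0,T;V)} \leq K_4\bigl(\|\ph_0-\ph_0\| + \|\s_0-\s_0\|\bigr) = 0.
\end{align*}
Hence every weak solution coincides with $[\ph^*,\s^*]$, and uniqueness follows.

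There is no genuine obstacle here: the statement is essentially a corollary of the continuous dependence estimates, and the only thing to verify is that the hypotheses of Theorems \ref{thm:weakcts} and \ref{thm:cts1} are indeed implied by the combined assumptions of the corollary. The mildly delicate point is that the two cases correspond exactly to the two continuous dependence theorems available: case (i) does not provide sufficient initial regularity for a strong solution to exist, which is why the restriction $n=1$ (required by Theorem \ref{thm:weakcts}) is imposed there, whereas case (ii) can exploit the weak--strong estimate of Theorem \ref{thm:cts1} and therefore allows a non-constant $n$ in two dimensions.
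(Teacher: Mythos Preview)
Your proposal is correct and follows exactly the approach indicated in the paper, which does not give an explicit proof but simply states that the corollary is entailed by Theorem~\ref{thm:weakcts} and Theorem~\ref{thm:cts1}. Your expansion of this into the two cases, verifying that the hypotheses \eqref{ass:str:m:n}--\eqref{ass:str:P} imply \eqref{ass:cts:m:n}--\eqref{ass:cts:P:F} when $n=1$, and that a strong solution can play the role of $[\ph_1,\s_1]$ in the weak--strong estimate, is precisely what is needed.
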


In two spatial dimensions,
we can prove a stronger continuous dependence result.
To this aim we need the following conditions.
\begin{enumerate}[label=$(\mathrm{D \arabic*})$, ref = $\mathrm{D \arabic*}$, series=D]
\item \label{ass:cts:s:m:n}  $m, n \in C^{1,1}([-1,1])$.
\item \label{ass:cts:s:P:F} $P, PF' \in C^{1,1}([-1,1])$.
\end{enumerate}

\begin{thm}\label{thm:cts2}
Assume that $d=2$ and that \eqref{ass:m:F}--\eqref{ass:P}, \eqref{ass:str:m:n}--\eqref{ass:str:P}, \eqref{ass:cts:s:m:n}--\eqref{ass:cts:s:P:F} are satisfied.
Suppose in addition that
\begin{align*}
m,n \in C^2([-1,1]), \quad F\in C^4(-1,1), \quad \lam \in C^2([-1,1]).
\end{align*}
Assume
that initial data $[\ph_{0,i}, \s_{0,i}]$, for $i = 1,2$, are given such that
$[\ph_{0,i},\s_{0,i}]\in W\times W$, with $\dn \s_{0,i} = 0$ \last{on $\Gamma$} and with $\ph_{0,i}$
satisfying \eqref{nonlinBC} and \eqref{ass:ini} for $i=1,2$. Let $[\ph_1,\s_1]$ and $[\ph_2,\s_2]$ be the corresponding
strong solutions given by Theorem \ref{thm:superstrong}.
Then, there exists a  positive constant $K_5$
which depends only on $\Omega$, $T$, $J$, and on the data of the system
such that
\begin{align*}
&\|\ph_1-\ph_2\|_{H^1(0,T;H) \cap L^\infty(0,T;V) \cap L^2(0,T;W)} + \| \s_1 - \s_2 \|_{H^1(0,T;H) \cap L^\infty(0,T;V) \cap L^2(0,T;W)}  \\
& \quad \leq K_5 \Big ( \| \ph_{0,1} - \ph_{0,2} \|_V + \| \s_{0,1} - \s_{0,2} \|_V \Big ).
\end{align*}
\end{thm}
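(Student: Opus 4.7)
The strategy is to write the system satisfied by the differences $\psi := \ph_1-\ph_2$ and $\zeta := \s_1-\s_2$, test with $\dt\psi$ and $\dt\zeta$ to obtain $H^1(0,T;H)\cap L^\infty(0,T;V)$-bounds, and close the $L^2(0,T;W)$-bound via the rewritten phase equation and elliptic regularity. Under the hypotheses, Theorem \ref{thm:superstrong} grants $\ph_i,\s_i \in W^{1,\infty}(0,T;H)\cap H^1(0,T;V)\cap L^\infty(0,T;W)$ with uniform bounds, and the reinforced $C^2$-regularity of $m,n,F,\lam$ renders $\lam,\lam',m,m',n,n'$ Lipschitz on $[-1,1]$. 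In two dimensions I would freely use $V\hookrightarrow L^p$ ($p<\infty$), $W\hookrightarrow W^{1,4}\cap C^0$, the Gagliardo--Nirenberg inequality \eqref{interpol_abs_2d}, and Agmon's inequality \eqref{agmon}. As a preliminary step, Theorem \ref{thm:cts1} supplies $\|\psi\|_{L^\infty(0,T;H)\cap L^2(0,T;V)}+\|\zeta\|_{L^\infty(0,T;H)\cap L^2(0,T;V)}\leq C D_0$, where $D_0 := \|\psi_0\|_V+\|\zeta_0\|_V$, which is used throughout to control the lower-order Lipschitz nonlinearities.

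The phase difference equation reads
\[
\dt\psi - \Delta(\Lam(\ph_1)-\Lam(\ph_2)) = -B\div\bigl[m(\ph_1)(\nabla J\star\psi) + (m(\ph_1)-m(\ph_2))(\nabla J\star\ph_2)\bigr] + \Pi,
\]
with $\Pi$ the Lipschitz difference of the proliferation terms. Using $\nabla(\Lam(\ph_1)-\Lam(\ph_2)) = A\lam(\ph_1)\nabla\psi + A(\lam(\ph_1)-\lam(\ph_2))\nabla\ph_2$ together with the identity $A\lam(\ph_1)\nabla\psi\cdot\nabla\dt\psi = \tfrac{1}{2}\dt[A\lam(\ph_1)|\nabla\psi|^2] - \tfrac{1}{2}A\lam'(\ph_1)\dt\ph_1|\nabla\psi|^2$, testing with $\dt\psi$ and invoking the coercivity $\lam\geq\alpha_0$ produces, after integration on $(0,t)$, the bound
\[
\int_0^t\|\dt\psi\|^2 + \frac{A\alpha_0}{2}\|\nabla\psi(t)\|^2 \leq C\|\nabla\psi_0\|^2 + \mathcal R_\ph(t).
\]
The hard part of $\mathcal R_\ph$ is the cross term $-A\int_0^t\!\int(\lam(\ph_1)-\lam(\ph_2))\nabla\ph_2\cdot\nabla\dt\psi$, which is the main obstacle. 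I would handle it by integration by parts in time: the boundary contribution at $t$ is estimated via the Lipschitz bound $|\lam(\ph_1)-\lam(\ph_2)|\leq C|\psi|$, \eqref{interpol_abs_2d}, and $\nabla\ph_2\in L^\infty(0,T;L^4)$ (from $\ph_2\in L^\infty(0,T;W)$) by $\varepsilon\|\nabla\psi(t)\|^2 + C_\varepsilon\|\psi(t)\|^2$, the first being absorbed into the coercive LHS and the second by $CD_0^2$ via Theorem \ref{thm:cts1}; the interior contribution, containing factors $\lam'(\ph_i)\dt\ph_i$ and $\nabla\dt\ph_2$, is controlled in $L^1(0,t)$ by H\"older/Young estimates using $\dt\ph_i\in L^\infty_tH\cap L^2_tV$ and $\nabla\dt\ph_2\in L^2_tH$, yielding a Gronwall-admissible bound of the form $\int_0^t\mathcal G\|\nabla\psi\|^2\,ds + \eta\|\psi\|_{L^2(0,T;W)}^2 + \varepsilon\int_0^t\|\dt\psi\|^2 + CD_0^2$, with $\mathcal G\in L^1(0,T)$. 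The convolution terms (via \eqref{conv-op}), the proliferation contribution, and the quadratic term $\tfrac{A}{2}\int\lam'(\ph_1)\dt\ph_1|\nabla\psi|^2$ are handled by standard estimates using the preliminary bound.

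An entirely parallel argument for the nutrient difference equation $\dt\zeta - \div(n(\ph_1)\nabla\zeta) = \div((n(\ph_1)-n(\ph_2))\nabla\s_2) - \Pi$ tested by $\dt\zeta$, whose cross term $\int(n(\ph_1)-n(\ph_2))\nabla\s_2\cdot\nabla\dt\zeta$ is handled by the same time-integration-by-parts trick using $\s_2\in L^\infty_tW$ and $\dt\s_2\in L^2_tV$, yields the companion estimate. To close the $L^2(0,T;W)$-bound I would recast the phase equation as
\[
A\lam(\ph_1)\Delta\psi = \dt\psi + B\,\div[\cdots] - \Pi - A(\lam(\ph_1)-\lam(\ph_2))\Delta\ph_2 - A\lam'(\ph_1)\nabla(\ph_1+\ph_2)\cdot\nabla\psi - A(\lam'(\ph_1)-\lam'(\ph_2))|\nabla\ph_2|^2,
\]
bound each right-hand side contribution in $L^2(0,T;H)$ using the preceding $H^1(H)\cap L^\infty(V)$-bounds together with Agmon \eqref{agmon} and \eqref{interpol_abs_2d} (for instance $\|(\lam(\ph_1)-\lam(\ph_2))\Delta\ph_2\|\leq C\|\psi\|_V^{1/2}\|\psi\|_W^{1/2}\|\ph_2\|_W$, absorbable with a small parameter), divide by $\lam(\ph_1)\geq\alpha_0$, and appeal to elliptic regularity for the Neumann problem on $\psi$, whose inhomogeneous boundary data $\dn\psi$ is extracted from \eqref{nonlinBC} and controlled in $L^2(0,T;H^{1/2}(\Gamma))$ via the trace theorem together with $\psi\in L^\infty(0,T;V)$. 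The corresponding step for $\zeta$ is simpler since $\dn\zeta=0$ on $\Gamma$. A final Gronwall application that absorbs the residual $\varepsilon,\eta$-contributions delivers the estimate claimed in the theorem.
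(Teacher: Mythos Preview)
Your approach is genuinely different from the paper's, and while it is plausibly salvageable, there is a real gap in the boundary-term step and the choice of test function makes the argument substantially harder than necessary.

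\textbf{The paper's route versus yours.} The paper does \emph{not} test the phase-difference equation with $\dt\psi$; it tests with $\dt\hat\Lam$, where $\hat\Lam:=\Lam(\ph_1)-\Lam(\ph_2)$. This choice makes the elliptic contribution a perfect time derivative $\tfrac12\tfrac{d}{dt}\|\nabla\hat\Lam\|^2$, and shifts the ``cross term'' into the parabolic term $(\dt\psi,\dt\hat\Lam)$, where it is estimated by a single application of \eqref{interpol_abs_2d} (see \eqref{Cts:4}). No integration by parts in time is needed, and the interior terms $\nabla\dt\ph_2$ that your approach produces never appear. More importantly, the paper first establishes $\|\hat\Lam\|_W$ via elliptic regularity and only \emph{then} converts to $\|\psi\|_W$ through the algebraic identity for $\partial_{ij}^2\psi$ in terms of $\partial_{ij}^2\hat\Lam$, $\partial_i\lam(\ph_k)$, etc. The advantage is that the boundary condition \eqref{bc} is naturally a condition on $\nabla\hat\Lam\cdot\bold n$, and its right-hand side involves only $m(\ph_i)$ and convolution terms, which are easily estimated in $H^{1/2}(\Gamma)\cap L^\infty(\Gamma)$ via Lemma~\ref{LEM_trace} and Lemma~\ref{LEM_controlofdiv}.

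\textbf{The gap in your boundary estimate.} You write that $\dn\psi$ is ``controlled in $L^2(0,T;H^{1/2}(\Gamma))$ via the trace theorem together with $\psi\in L^\infty(0,T;V)$''. This is not correct: the trace theorem gives $\psi|_\Gamma\in H^{1/2}(\Gamma)$ from $\psi\in V$, but $\dn\psi\in H^{1/2}(\Gamma)$ would require $\nabla\psi\in V$, i.e.\ $\psi\in W$, which is precisely what you are trying to prove. To extract $\dn\psi$ you must solve for it from the boundary condition, obtaining
\[
A\lam(\ph_1)\dn\psi=-A(\lam(\ph_1)-\lam(\ph_2))\dn\ph_2+B\big[(m_1-m_2)(\nabla J\star\ph_1)+m_2(\nabla J\star\psi)\big]\cdot\bold n,
\]
and then estimate each product in $H^{1/2}(\Gamma)$ using Lemma~\ref{LEM_trace}. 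This forces you to control $\|\dn\ph_2\|_{L^\infty(\Gamma)}$ (obtainable from the boundary condition for $\ph_2$ and $\lam\geq\alpha_0$), to use Agmon's inequality for $\|\psi\|_{L^\infty(\Gamma)}$ (introducing a further $\delta\|\psi\|_W$ to absorb), and to divide by $\lam(\ph_1)$ in $H^{1/2}(\Gamma)$ (another application of Lemma~\ref{LEM_trace}). None of this is in your sketch, and it is exactly the complication the paper avoids by working with $\hat\Lam$ first.
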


\subsection{Existence of strong solutions}
Let us first recall two useful lemmas.
\begin{lem}
\label{LEM_trace}
Let $f,g \in H^{1/2}(\Gamma)\cap L^\infty(\Gamma)$, \last{where $\Gamma:=\partial\Omega$, $\Omega\subset\mathbb{R}^d$,
 $d\in\{2,3\}$}.
Then, the product $fg \in H^{1/2}(\Gamma)\cap L^\infty(\Gamma)$ and
\begin{align} \label{traceineq}
\|fg\|_{H^{1/2}(\Gamma)} \leq \|f \|_{L^\infty(\Gamma)}\|g \|_{H^{1/2}(\Gamma)}	+ \|f \|_{H^{1/2}(\Gamma)} \|g \|_{L^\infty(\Gamma)}.
\end{align}
\end{lem}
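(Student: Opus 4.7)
My plan is to work directly from the intrinsic Sobolev--Slobodeckij characterization of the $H^{1/2}$-norm on the $(d-1)$-dimensional manifold $\Gamma$, namely
\begin{equation*}
\|u\|_{H^{1/2}(\Gamma)}^2 = \|u\|_{L^2(\Gamma)}^2 + [u]_{1/2}^2, \qquad [u]_{1/2}^2 := \int_\Gamma\!\int_\Gamma \frac{|u(x)-u(y)|^2}{|x-y|^d}\, d\sigma(x)\,d\sigma(y),
\end{equation*}
which is the natural norm induced by the fact that $\Gamma$ has dimension $d-1$ and the regularity index is $1/2$. The $L^\infty\cap H^{1/2}$ hypothesis guarantees all integrals appearing below are finite, so $fg$ is at least in $L^2(\Gamma)$ and the only real content is to estimate the Gagliardo seminorm.

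The key algebraic identity is the product splitting
\begin{equation*}
(fg)(x)-(fg)(y) = f(x)\bigl[g(x)-g(y)\bigr] + g(y)\bigl[f(x)-f(y)\bigr].
\end{equation*}
Taking absolute values, bounding $|f(x)|\leq \|f\|_{L^\infty(\Gamma)}$ and $|g(y)|\leq \|g\|_{L^\infty(\Gamma)}$ pointwise, and then applying Minkowski's inequality in the weighted space $L^2\bigl(\Gamma\times\Gamma;\,d\sigma\otimes d\sigma/|x-y|^d\bigr)$ immediately yields
\begin{equation*}
[fg]_{1/2} \leq \|f\|_{L^\infty(\Gamma)} [g]_{1/2} + \|g\|_{L^\infty(\Gamma)} [f]_{1/2}.
\end{equation*}

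For the $L^2$ part one has the trivial bound $\|fg\|_{L^2(\Gamma)}\leq \|f\|_{L^\infty(\Gamma)}\|g\|_{L^2(\Gamma)}$, which I package into the first term of the claimed inequality by estimating $\|g\|_{L^2(\Gamma)}\leq \|g\|_{H^{1/2}(\Gamma)}$ and $[f]_{1/2}\leq \|f\|_{H^{1/2}(\Gamma)}$, $[g]_{1/2}\leq \|g\|_{H^{1/2}(\Gamma)}$. Combining the seminorm bound with the $L^2$ bound via $\sqrt{a^2+b^2}\leq a+b$ for $a,b\geq 0$ gives exactly
\begin{equation*}
\|fg\|_{H^{1/2}(\Gamma)} \leq \|f\|_{L^\infty(\Gamma)}\|g\|_{H^{1/2}(\Gamma)} + \|f\|_{H^{1/2}(\Gamma)}\|g\|_{L^\infty(\Gamma)},
\end{equation*}
as stated. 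There is no real obstacle here: the proof is essentially the Leibniz-type estimate for fractional Sobolev norms, and the low dimensionality $d\in\{2,3\}$ of the ambient space plays no role beyond ensuring that $\Gamma$ is a well-defined $(d-1)$-dimensional Lipschitz manifold on which the Slobodeckij seminorm makes sense. The only minor subtlety is the correct exponent in the Gagliardo kernel on $\Gamma$, which accounts for the $|x-y|^{-d}$ weight above rather than $|x-y|^{-(d+1)}$ as one would use on $\Omega$ itself.
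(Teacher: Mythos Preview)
Your approach is the standard one and matches the framework the paper adopts: the paper does not actually prove the lemma but cites \cite[Chap.~IX, Sec.~18]{DB} and merely recalls the Slobodeckij seminorm \eqref{seminorm}, which is precisely the characterization you use. The product splitting and Minkowski argument for the seminorm are exactly right.

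There is, however, a small slip in the final combination step. Using $\sqrt{a^2+b^2}\leq a+b$ with $a=\|fg\|_{L^2(\Gamma)}$ and $b=[fg]_{1/2}$, and then bounding $\|g\|_{L^2(\Gamma)}\leq\|g\|_{H^{1/2}(\Gamma)}$ and $[g]_{1/2}\leq\|g\|_{H^{1/2}(\Gamma)}$ separately, yields
\[
\|fg\|_{H^{1/2}(\Gamma)}\leq 2\,\|f\|_{L^\infty(\Gamma)}\|g\|_{H^{1/2}(\Gamma)}+\|g\|_{L^\infty(\Gamma)}\|f\|_{H^{1/2}(\Gamma)},
\]
not the stated inequality with constant $1$. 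The fix is immediate: instead of splitting $\sqrt{a^2+b^2}$, apply the triangle inequality in $\mathbb{R}^2$ to the vector $(\|fg\|_{L^2(\Gamma)},[fg]_{1/2})$, written as the sum of $(\|f\|_{L^\infty}\|g\|_{L^2},\|f\|_{L^\infty}[g]_{1/2})$ and $(0,\|g\|_{L^\infty}[f]_{1/2})$. This gives
\[
\|fg\|_{H^{1/2}(\Gamma)}\leq \|f\|_{L^\infty(\Gamma)}\sqrt{\|g\|_{L^2(\Gamma)}^2+[g]_{1/2}^2}+\|g\|_{L^\infty(\Gamma)}[f]_{1/2}
\leq \|f\|_{L^\infty(\Gamma)}\|g\|_{H^{1/2}(\Gamma)}+\|g\|_{L^\infty(\Gamma)}\|f\|_{H^{1/2}(\Gamma)},
\]
which is exactly \eqref{traceineq}. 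With this adjustment your proof is complete and coincides with the argument in the cited reference.
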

We refer the reader to \cite[Chap.~IX, Sec.~18]{DB} for the proof and just recall that the space ${H^{1/2}(\Gamma)}$ is endowed with the following seminorm
\begin{align}\label{seminorm}
[f]_{H^{1/2}(\Gamma)}:=\Big ( \int_\Gamma\int_\Gamma \frac {|f(x)-f(y)|^2}{\last{|x-y|^d}} d\Gamma(x)d\Gamma(y) \Big )^{1/2},
\end{align}
where $d\Gamma$ stands for the surface measure on the boundary $\Gamma$.

Another advantage of considering admissible kernels in the
sense of Definition~\ref{defn:kernels} is the validity of the following result, \last{which holds in spatial dimensions $d \geq 2$ and} whose proof can be found in \cite[Lem.~2]{BRB}.
\begin{lem}\label{LEM_controlofdiv}
Assume that the kernel $J$ is admissible in the sense of Definition~\ref{defn:kernels}. Then, for every $p\in(1,\infty)$, there exists a positive constant $C_p$ such that
\begin{align}\label{controlofdiv}
\|\div(\nabla J\star \psi)\|_{L^p(\Omega)^{d \times d}}	\leq C_p \| \psi \|_{L^p(\Omega)} \qquad
\last{\forall \psi \in L^p(\Omega)}.
\end{align}
Moreover, $C_p={C_*} p$ if $p \in [2,\infty)$ and $C_p=C_* p/ (p-1)$ if $p \in (1,2)$ for a positive constant $C_*$ independent of $p$.
\end{lem}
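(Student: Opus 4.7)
The plan is to recognise the operator $\psi \mapsto \div(\nabla J \star \psi)$ --- whose entries are $\partial_i \partial_j (J \star \psi)$ for $1 \leq i,j \leq d$, consistent with the $L^p(\Omega)^{d \times d}$ target norm --- as a Calder\'on--Zygmund singular integral convolution with the matrix-valued kernel $K(x) := D^2 J(x)$, defined on $\mathbb{R}^d \setminus \{0\}$. After extending $\psi$ by zero to the whole of $\mathbb{R}^d$, one writes
\begin{align*}
\partial_i \partial_j (J\star\psi)(x) = \mathrm{p.v.}\! \int_{\mathbb{R}^d} K_{ij}(x-y) \psi(y)\, dy + c_{ij}\, \psi(x),
\end{align*}
where the constants $c_{ij}$ account for the distributional contribution of $K$ at the origin. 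Once this representation is justified, the bound \eqref{controlofdiv} will follow from the classical $L^p$ theory of singular integrals, and the sharp growth of $C_p$ in the statement is the characteristic Calder\'on--Zygmund constant behaviour.

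The next step is to verify the three Calder\'on--Zygmund conditions on $K$ under the admissibility hypothesis on $J$. The size bound $|K(x)| \leq C|x|^{-d}$ is obtained by integrating the pointwise estimate $|D^3 J(x)| \leq C_d |x|^{-d-1}$ along radial rays, using the $C^3$ regularity of $J$ away from the origin together with the monotonicity assumptions on $\widetilde J''$ and $\widetilde J'/r$ to control boundary contributions near $r_0$. The H\"ormander smoothness follows at once from $|\nabla K(x)| = |D^3 J(x)| \leq C_d |x|^{-d-1}$. The crucial spherical cancellation $\int_{|x|=r} K(x)\, d\sigma(x) = 0$ for every $r > 0$ is a consequence of the radial symmetry of $J$: reflections $x_i \mapsto -x_i$ kill the off-diagonal entries $\partial_i \partial_j J$ for $i \neq j$, while rotational invariance forces the diagonal entries to coincide so that their spherical averages produce only a scalar multiple of the identity, which is absorbed into $c_{ij}$ above.

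With $K$ identified as a genuine Calder\'on--Zygmund kernel, one invokes the classical theorem: weak--$(1,1)$ boundedness combined with Marcinkiewicz interpolation yields an $L^p\to L^p$ estimate for every $p \in (1,\infty)$, and a careful tracking of the Marcinkiewicz constant as $p\to\infty$ gives the sharp asymptotics $C_p \leq C_* p$ for $p \in [2,\infty)$. The dual range $p \in (1,2)$ with $C_p \leq C_* p/(p-1)$ is then obtained by $L^p$--$L^{p'}$ duality, exploiting the fact that convolution against the symmetric kernel $K$ is essentially self-adjoint. Restricting back to $\Omega$ (which is bounded) preserves the bound. The main obstacle I anticipate is rigorously disentangling the principal-value convolution from the distributional singularity at the origin in the displayed formula, since $D^2 J$ can carry a genuine Dirac mass when $J$ is of Newtonian or Bessel type; this is exactly the point where admissibility --- particularly the radial symmetry together with the monotonicity of $\widetilde J''$ and $\widetilde J'/r$ --- is indispensable to ensure $K$ is a standard Calder\'on--Zygmund kernel with the required spherical cancellation, rather than a more singular object.
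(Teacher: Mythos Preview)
The paper does not supply its own proof of this lemma; it simply refers the reader to \cite[Lem.~2]{BRB}. Your Calder\'on--Zygmund route is indeed the argument carried out in that reference, so at the level of strategy you are aligned with the intended proof.

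There is, however, a genuine imprecision in your treatment of the cancellation condition. You first assert $\int_{|x|=r} K(x)\,d\sigma(x)=0$ for all $r>0$, and then in the same breath say the diagonal spherical averages produce a nonzero scalar multiple of the identity to be ``absorbed into $c_{ij}$''. These two claims are incompatible. What actually holds is this: for $i\neq j$ the spherical integral of $\partial_i\partial_j J$ vanishes by odd reflection, but for $i=j$ it does \emph{not}. Writing $\partial_i\partial_j J(x) = (\widetilde J''-\widetilde J'/r)\,x_i x_j/r^2 + (\widetilde J'/r)\,\delta_{ij}$ for the radially symmetric $J$, a direct computation gives
\[
\int_{\varepsilon<|x|<R} \partial_i^2 J(x)\,dx \;=\; \frac{|S^{d-1}|}{d}\,\bigl[\widetilde J'(r)\,r^{d-1}\bigr]_{\varepsilon}^{R},
\]
which is uniformly bounded as $\varepsilon\to 0$ because $J\in W^{1,1}_{\rm loc}(\mathbb{R}^d)$. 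It is this \emph{uniform boundedness of truncated integrals}, not vanishing of spherical means, that supplies the cancellation hypothesis required by the Calder\'on--Zygmund theorem and generates the finite constants $c_{ij}$ in your principal-value formula. The monotonicity of $\widetilde J''$ and $\widetilde J'/r$ near zero is used precisely here, to ensure the limits defining the principal value exist. Once this is corrected, the rest of your outline---size bound by radial integration of $|D^3 J|\le C_d|x|^{-d-1}$ from a fixed radius inward, H\"ormander smoothness from the same pointwise bound, and the $C_p\sim p$ growth from Marcinkiewicz interpolation plus duality---is sound.
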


\subsubsection{Proof of Theorem~\ref{thm:strong}}
We apply the argument outlined in \cite[Sec.~4]{FGGS}
to the system given by \eqref{eq_first}, \eqref{bc} only, where
$\s$ is taken as the weak solution given by Theorem \ref{thm:weak}. For fixed $\eps > 0$, we introduce the regular potential $F_{\eps}$, and  the functions
$m_{\eps}$, $P_{\eps}$ given by
\begin{align}
\{F_{\eps}''(s), m_\eps(s), P_\eps(s) \} := \begin{cases}
\{F''(1-\eps), m(1-\eps), P(1-\eps)\}& \, \, \, \, s \geq 1-\eps, \\
\{F''(s), m(s), P(s) \} & |s| \leq 1-\eps, \\
\{F''(\eps-1), m(\eps-1), P(\eps-1)\} & \, \, \, \, s \leq \eps-1,
\end{cases}\label{e18}
\end{align}
with $F_\eps(0) = F(0)$ and $F_\eps'(0) = F'(0)$. Then,
owing to \eqref{ass:m:F}, it is clear that the function $\lambda_\eps:=m_{\eps}F_\eps''$ satisfies the bounds
\begin{align}\label{e3}
0 < \alpha_0 \leq \lambda_\eps(s) \leq \max_{s \in [-1,1]} \lambda(s) =: \lambda_\infty \quad \forall s \in \mathbb{R}.
\end{align}
Moreover, we claim that there exist two constants $k_1,k_2>0$, independent
of $\eps$, such that the following bound holds
\begin{align}\label{e1}
&|P_{\eps}(s)\,F_{\eps}'(s)|\leq k_1+k_2|s|\qquad\forall s\in\mathbb{R}.
\end{align}
Indeed, for $s\geq 1-\eps$, we have that
\begin{align}\label{e2}
&|P_{\eps}(s)\,F_{\eps}^\prime(s)|\leq |(PF^\prime)(1-\eps)|+|(PF^{\prime\prime})(1-\eps)|(s-(1-\eps))
\leq k_1+k_2\,s,
\end{align}
where $k_1=\| PF^\prime \|_{L^\infty(-1,1)}$ (cf.~\eqref{ass:P} or (A4*)), and $k_2$ is given by $k_2=k\lambda_\infty\sqrt{P_\infty}$ due to \eqref{ass:P}  or by
$k_2=\| PF^{\prime\prime}\|_{L^\infty(-1,1)}$ in case (A4*) is assumed (cf.~Remark \ref{alt_cond}). For $s\leq -1+\eps$ the estimate is similar to \eqref{e2},
while for $|s|\leq 1-\eps$ we simply have $|P_{\eps}(s)\,F_{\eps}^\prime(s)|=|P(s)\,F^\prime(s)|\leq k_1$.  Hence, \eqref{e1} immediately follows.

Denoting by $\Q : \mathbb{R} \to \mathbb{R}$ the truncation function
\begin{align*}
\Q(s) = \max\{-1, \min\{s, 1\}\} \quad \forall s \in \mathbb{R},
\end{align*}
we first approximate \eqref{eq_first}, \eqref{bc} with the following system
\begin{align}
& \dt \ph - \Delta \Lam_{\eps} (\ph) = - B \, \div ( m_{\eps}(\ph)(\nabla J\star\Q(\ph))) - A (P_\eps F_\eps')(\Q(\ph))\label{eq_first_eps}\\[1mm]
\notag & \qquad \qquad \qquad  \qquad  +P_{\eps}(\ph)(\s + B J\star\Q(\ph)) \text{ in } \Omega, \\
& [\nabla \Lam_{\eps}(\ph) - B\, m_{\eps}(\ph)(\nabla J \star\Q(\ph))]\cdot \bold{n} = 0 \text{ on } \Gamma,\label{boundary_condition_eps}
\end{align}
where $\Lam_\eps(s) : = A \int_0^s \lambda_\eps(r) dr$ for every $s\in\mathbb{R}$.  We then prove that, for every $\eps>0$, system
\eqref{eq_first_eps}-\eqref{boundary_condition_eps}
admits a solution $\ph_\eps$ in the class \eqref{imp_reg1}.

To this aim, a time discretization scheme applied to
\eqref{eq_first_eps}-\eqref{boundary_condition_eps} is implemented as follows.
We first recall that $\s$ (which is now fixed) satisfies \last{$\sigma \in H^1(0,T;V^*) \cap L^2(0,T;V)\subset C^0([0,T];H)$.}
Now, fix $N \in \mathbb{N}$ and set the time step $\tau = T/N$.  For $k = 0, \dots, N-1$, given $\varphi_{k} \in V$ and $\sigma_k := \sigma(k \tau) \in H$, find $\varphi_{k+1} \in V$ solving
\begin{align}\label{eps_approx}
\notag - \tau \Delta \Lam_\eps(\varphi_{k+1}) + \varphi_{k+1} & = \varphi_{k} - \tau B \div \big ( m_\eps(\varphi_{k}) ( \nabla J \star \Q(\varphi_k)) \big ) - \tau A (P_\eps F_\eps')(\Q(\varphi_k)) \\[1mm]
& \quad + \tau P_\eps(\varphi_k) (\sigma_k  + B J \star \Q(\varphi_k)) 
\quad \text{ a.e.~in } \Omega,
\\[1mm]
\label{eps_bc} \nabla \Lam_\eps(\ph_{k+1}) \cdot \bold{n} & = B m_\eps(\ph_k)(\nabla J \star \Q(\ph_k)) \cdot \bold{n}  \quad \text{ a.e.~on } \Gamma.
\end{align}
Notice that $\sigma_k$ is well-defined on account of the regularity \last{$\sigma \in C^0([0,T];H)$.}
%$\sigma \in H^1(0,T;V^*) \cap L^2(0,T; V) \subset C^0([0,T];H)$.
 The nonlinear operator $A_k : V \to V^*$ defined by
\begin{align*}
\<A_k \ph , \psi >_V := \tau ( \nabla \Lam_\eps(\ph), \nabla \psi) + (\ph, \psi) \quad \forall \ph, \psi \in V,
\end{align*}
is pseudomonotone and coercive on $V$ (cf.~\cite[Lems  2.31, 2.32 and 2.35]{Rou}), while
\begin{align*}
\langle g_k , \psi \rangle_V & := (\ph_k, \psi) + \tau B (m_\eps(\ph_k)(\nabla J \star \Q(\ph_k)), \nabla \psi) - \tau A ((P_\eps F_\eps')(\Q(\varphi_k)), \psi) \\[1mm]
& \quad + \tau (P_\eps(\ph_k)(\sigma_k + B J \star \Q(\ph_k)), \psi)  \quad \forall \psi \in V
\end{align*}
satisfies $g_k \in V^*$, thanks to the boundedness of $m_{\eps}$, $P_{\eps}$ and
to \eqref{e1}.  Therefore, \eqref{eps_approx}-\eqref{eps_bc}
can be written as an abstract problem
\begin{align*}
A_k \ph_{k+1}=g_k \text{ in } V^*
\end{align*}
and admits a solution $\ph_{k+1} \in V$ (see \cite[Thm.~2.6]{Rou}; cf.~also \cite{Bre}).
\last{By means of a comparison argument in \eqref{eps_approx}, together with elliptic regularity theory, we can easily infer that
we have also $\ph_{k+1} \in W$. Indeed, by noting that $m_\eps\in W^{1,\infty}(\mathbb{R})$, $P_\eps\in L^\infty(\mathbb{R})$
(cf. \eqref{e18}, \eqref{ass:P} and \eqref{ass:str:m:n}), by using \eqref{e1}, Lemma \ref{LEM_controlofdiv}
(which ensures that $\div  ( \nabla J \star \Q(\varphi_k))\in L^q(\Omega)$, for all $q\in(1,\infty)$),
and the fact that $\ph_k, \ph_{k+1} \in V$, $\sigma_k\in H$, we can deduce from \eqref{eps_approx} that $\Delta \Lam_\eps(\varphi_{k+1}) \in H$. Moreover, by taking advantage of \eqref{traceineq} and arguing as in \cite[Estimate (4.25)]{FGGS},
we can see that $\nabla \Lam_\eps(\varphi_{k+1}) \cdot \bold{n} \in H^{1/2}(\Gamma)$. Therefore,
elliptic regularity entails that $\Lam_\eps(\ph_{k+1}) \in W$, and this implies that $\nabla \Lam_\eps(\ph_{k+1}) =A
\lambda_\eps(\ph_{k+1}) \nabla \ph_{k+1} \in V$. On the other hand, by \eqref{ass:str:F:lam} we infer that $\nabla \lambda_\eps(\ph_{k+1}) = \lambda_\eps'(\ph_{k+1}) \nabla \ph_{k+1} \in L^4(\Omega)$. Thus $\nabla \ph_{k+1} = A^{-1}\lambda_\eps^{-1}(\ph_{k+1}) \nabla \Lam_\eps(\ph_{k+1}) \in V$, whence $\ph_{k+1} \in W$.}
% Arguing as in \cite{FGGS}, a bootstrap \last{procedure along the following lines} yields also that $\ph_{k+1} \in W$: \last{Using \eqref{ass:P}, the regularity $\ph_k, \ph_{k+1} \in V$, and Young's inequality for convolutions, we first deduce that $\Delta \Lam_\eps(\varphi_{k+1}) \in L^{\frac{7}{4}}(\Omega)$, and by \eqref{traceineq} we also have $\nabla \Lam_\eps(\varphi_{k+1}) \cdot \bold{n} \in H^{\frac{1}{2}}(\Gamma)$.  By elliptic regularity, $\Lam_\eps(\varphi_{k+1}) \in W^{2,\frac{7}{4}}(\Omega)$, which means that $\nabla \Lam_\eps(\varphi_{k+1}) = \lambda_\eps(\varphi_{k+1}) \nabla \varphi_{k+1} \in W^{1,\frac{7}{4}}(\Omega) \subset L^4(\Omega)$.  Using the boundedness \eqref{e3} of $\lambda_\eps$, we have $\nabla \ph_{k+1} \in L^4(\Omega)$ and in turn this means the right-hand side of \eqref{eps_approx} now belongs to $L^2(\Omega)$.  Invoking elliptic regularity again we deduce $\Lam_\eps(\ph_{k+1}) \in H^2(\Omega)$ and hence $\nabla \Lam_\eps(\ph_{k+1}) = \lambda_\eps(\ph_{k+1}) \nabla \ph_{k+1} \in H^1(\Omega)$. By \eqref{ass:str:F:lam} we infer that $\nabla \lambda_\eps(\ph_{k+1}) = \lambda_\eps'(\ph_{k+1}) \nabla \ph_{k+1} \in L^4(\Omega)$, and thus $\nabla \ph_{k+1} = \frac{1}{\lambda_\eps(\ph_{k+1)})} \nabla \Lam_\eps(\ph_{k+1}) \in H^1(\Omega)$, whence $\ph_{k+1} \in H^2(\Omega)$.}

Before we derive uniform discrete estimates, let us collect a useful elementary identity and several useful inequalities established in \cite{FGGS}, more precisely \eqref{FGGS1}, \eqref{FGGS2} and \eqref{FGGS3} below can be derived from equations (4.16), (4.25) and (4.27) of \cite{FGGS}, respectively. In the following $\delta$ denotes positive constants whose values are yet to be determined, while $C$ denotes positive constants independent of $N$, $\tau$ and $\eps$.  For $n \leq N-1$, it holds that
\begin{align}\label{elementary_id}
\sum_{k=0}^{n} (\ph_{k+1}-\ph_{k},\ph_{k+1}) =	\tfrac 12 \sum_{k=0}^{n} \|\ph_{k+1}-\ph_{k}\|^2 +\tfrac 12  \|\ph_{n+1}\|^2	-\tfrac 12  \|\ph_{0}\|^2,
\end{align}
and that
\begin{align}
\label{FGGS1} \tau B \left | \sum_{k=0}^{n} (m_\eps(\ph_k) (\nabla J \star \Q(\ph_{k})), \nabla \ph_{k+1})\right | \leq \delta \tau \sum_{k=0}^{n} \| \nabla \ph_{k+1} \|^2 + C, \\
\label{FGGS2} \delta \tau \sum_{k=0}^n \left \| \nabla \Lam_\eps(\ph_{k+1}) \cdot \bold{n} \right \|_{H^{1/2}(\Gamma)}^{2} \leq C \delta T + C \delta \tau \sum_{k=0}^n \| \ph_k \|_V^2, \\
\label{FGGS3} \delta \tau B \sum_{k=0}^n \| \div (m_\eps(\ph_k) (\nabla J \star \Q(\ph_k))) \|^2 \leq C \delta T + C \delta \tau \sum_{k=0}^n
\|\nabla \ph_k\|^2.
\end{align}
Now, integrating \eqref{eps_approx} yields
\begin{align*}
|(\ph_{k+1}, 1)| &  \leq |(\ph_{k},1)| + \tau |(P_\eps(\ph_k)(\sigma_k + B J \star \Q(\ph_k)) - A (P_\eps F_\eps')(\Q(\ph_k)), 1)| \\
&  \leq |(\ph_k, 1)| + C \tau
\end{align*}
on account of the boundedness of $(P_\eps F_\eps')(\Q(\ph_k))$ and $\|\sigma_k\| \leq \|\sigma\|_{L^\infty(0,T; H)} \leq C$.  In particular,
\begin{align*}
|(\ph_{k+1},1)| \leq |(\ph_0,1)| + C(k+1)\tau \leq |(\ph_0,1)| + CT \leq CT(1+\|\ph_0\|).
\end{align*}
Hence, by the Poincar\'e inequality
\begin{equation}\label{FGGS4}
\begin{aligned}
& \tau \sum_{k=0}^n |(P_\eps(\ph_k)(\sigma_k + B J \star \Q(\ph_k)) - A(P_\eps F_\eps')(\Q(\ph_k)), \ph_{k+1})| \\
& \quad \leq C \tau \sum_{k=0}^n \|\ph_{k+1} \| \leq C \tau \sum_{k=0}^n \| \ph_{k+1} - \mean{\ph_{k+1}}\| + |\mean{\ph_{k+1}}| \leq C + \delta \tau \sum_{k=0}^n \| \nabla \ph_{k+1} \|^2.
\end{aligned}
\end{equation}
Then, testing \eqref{eps_approx} with $\ph_{k+1}$, summing over $k$ from $k = 0$ to $k = n \leq N-1$,
employing the identity \eqref{elementary_id}, estimates \eqref{FGGS1}, \eqref{FGGS4} and \eqref{e3},
and choosing $\delta =   \alpha_0 A/4$ yields
\begin{align}\label{FGGS5}
& \tfrac{1}{2} \sum_{k=0}^n \| \ph_{k+1} - \ph_k\|^2 + \tfrac{1}{2} \| \ph_{n+1}\|^2 + \tfrac{\alpha_0A}{2} \tau \sum_{k=0}^n \| \nabla \ph_{k+1} \|^2 \leq \tfrac{1}{2} \| \ph_0 \|^2 + C.
\end{align}
An immediate consequence of \eqref{FGGS5} is
\begin{align}\label{FGGS6}
\tau \sum_{k=0}^n \|\ph_{k+1} \|^2 \leq C\tau \sum_{k=0}^n \| \nabla \ph_{k+1} \|^2 + C \tau \sum_{k=0}^n |\mean{\ph_{k+1}}|^2 \leq C(1+\|\ph_0\|^2).
\end{align}
Next, testing \eqref{eps_approx} with $\Lam_\eps(\ph_{k+1}) - \Lam_\eps(\ph_k)$, summing from $k =0 $ to $k = n$ and using \eqref{elementary_id} for $\nabla \Lam_\eps (\ph_k)$ yields
\begin{align*}
& \tfrac{\alpha_0A}{\tau} \sum_{k=0}^n  \| \ph_{k+1} - \ph_k \|^2 + \tfrac{1}{2} \| \nabla \Lam_\eps(\ph_{n+1}) \|^2 + \tfrac{1}{2} \sum_{k=0}^n \| \nabla \Lam_\eps(\ph_{k+1}) - \nabla \Lam_\eps(\ph_{k}) \|^2 \\
& \quad \leq B(m_\eps(\ph_{n+1}) (\nabla J \star \Q(\ph_{n+1})), \nabla \Lam_\eps(\ph_{n+1})) - B(m_\eps(\ph_0) (\nabla J \star \Q(\ph_0)), \nabla \Lam_\eps(\ph_0)) \\
& \qquad - B\sum_{k=0}^n (m_\eps(\ph_{k+1}) (\nabla J \star \Q(\ph_{k+1})) - m_\eps(\ph_k)(\nabla J \star \Q(\ph_k)), \nabla \Lam_\eps(\ph_{k+1})) \\
& \qquad + \sum_{k=0}^n (P_\eps(\ph_k) (\sigma_k + B J \star \Q(\ph_k)) - A (P_\eps F_\eps')(\Q(\ph_k)), \Lam_\eps(\ph_{k+1}) - \Lam_\eps(\ph_k)) \\
& \qquad +  \tfrac{1}{2} \| \nabla \Lam_\eps(\ph_0) \|^2.
\end{align*}
As in \cite[(4.19)-(4.20)]{FGGS}, the first and third terms on the right-hand side are bounded above by
\begin{align*}
\tfrac{1}{4} \| \nabla \Lam_\eps(\ph_{n+1})\|^2 + \tfrac{\alpha_0A}{4 \tau} \sum_{k=0}^n \| \ph_{k+1} - \ph_k\|^2 + C \tau \sum_{k=0}^n \| \nabla \Lam_\eps(\ph_{k+1}) \|^2 + C.
\end{align*}
Meanwhile, using that $\Lam_\eps(a) - \Lam_\eps(b) = A\int_b^a \lambda_\eps(s)ds$, the fourth term is bounded above as follows
\begin{align*}
& \left | \sum_{k=0}^n (P_\eps(\ph_k) (\sigma_k + B J \star \Q(\ph_k)) - A (P_\eps F_\eps')(\Q(\ph_k)), \Lam_\eps(\ph_{k+1}) - \Lam_\eps(\ph_k))  \right | \\
&\quad  \leq C \sum_{k=0}^n  \| \ph_{k+1} - \ph_k \| \leq \sum_{k=0}^n (C \tau + \tfrac{\alpha_0A}{4 \tau} \| \ph_{k+1} - \ph_k \|^2) \leq CT + \tfrac{\alpha_0A}{4 \tau} \sum_{k=0}^n \| \ph_{k+1} - \ph_k \|^2.
\end{align*}
Together with the fact that $\|\nabla \Lam_\eps(\ph_0) \| \leq A \lambda_\infty \| \nabla \ph_0 \|$, we find that
\begin{align*}
& \tfrac{\alpha_0A}{2 \tau} \sum_{k=0}^n  \| \ph_{k+1} - \ph_{k} \|^2 + \tfrac{1}{4} \| \nabla \Lam_\eps(\ph_{n+1}) \|^2 + \tfrac{1}{2} \sum_{k=0}^n \| \nabla \Lam_\eps(\ph_{k+1}) - \nabla \Lam_\eps(\ph_k) \|^2 \\
& \quad \leq C(1+\|\ph_0\|_V^2) + C \tau \sum_{k=0}^n \| \nabla \Lam_\eps(\ph_{k+1}) \|^2
\\
&\quad \leq C(1+\|\ph_0\|_V^2)
+C\tau\|\nabla\Lambda_{\eps}(\ph_{n+1})\|^2+C\tau\sum_{k=0}^{n-1} \| \nabla \Lam_\eps(\ph_{k+1}) \|^2.
\end{align*}
By taking $\tau$ small enough so that $C \tau < \frac{1}{4}$, and by applying the discrete Gronwall lemma, we deduce that
\begin{align}\label{FGGS7}
\tfrac{1}{\tau} \sum_{k=0}^n  \| \ph_{k+1} - \ph_k\|^2 + \| \nabla \Lam_\eps(\ph_{n+1}) \|^2 + \sum_{k=0}^n \| \nabla \Lam_\eps(\ph_{k+1})- \nabla \Lam_\eps(\ph_k) \|^2
\leq\mathbb{Q}(\|\ph_0\|_V).
\end{align}
From \eqref{FGGS5}-\eqref{FGGS6} we infer that
\begin{align*}
\tau \sum_{k=0}^n \| \Lam_\eps(\ph_{k+1}) \|_V^2 \leq C \tau \sum_{k=0}^n \| \ph_{k+1} \|_V^2
\leq C(1+\|\ph_0\|^2),
\end{align*}
while by \eqref{FGGS3} and \eqref{FGGS7} it holds that
\begin{equation}\label{FGGS7.5}
\begin{aligned}
& \tau \sum_{k=0}^n \| \Delta \Lam_\eps(\ph_{k+1}) \|^2 \\
& \quad \leq  \frac{C}{\tau} \sum_{k=0}^n \|\ph_{k+1} - \ph_k \|^2 + C \tau \sum_{k=0}^n   \| \div (m_\eps(\ph_k)(\nabla J \star \Q(\ph_k))) \|^2 \\
& \qquad + C \tau \sum_{k=0}^n \| P_\eps(\ph_k)(\sigma_k + B J \star \Q(\ph_k)) - A (P_\eps F_\eps')(\Q(\ph_k)) \|^2 \\
& \quad
\leq\mathbb{Q}(\|\ph_0\|_V).
\end{aligned}
\end{equation}
Employing elliptic regularity, \eqref{FGGS2} and the above estimates, we infer that
\begin{align}
	\notag
 \tau \sum_{k=0}^n \| \Lam_\eps(\ph_{k+1}) \|_{W}^2
&\leq C \tau \sum_{k=0}^n \big (\| \Lam_\eps(\ph_{k+1}) \|_V^2 + \| \nabla \Lam_\eps(\ph_{k+1}) \cdot \bold{n} \|_{H^{1/2}(\Gamma)}^2 + \| \Delta \Lam_\eps(\ph_{k+1}) \|^2 \big )\\
&  \leq\mathbb{Q}(\|\ph_0\|_V)
\label{FGGS8}
\end{align}
and consequently a similar argument to that used in \cite[Proof of Thm.~3.6, p.~695--696]{FGGS}
for the case $d=2$ yields
\begin{align}\label{FGGS9}
\tau \sum_{k=0}^n \| \ph_{k+1} \|_{W}^2 \leq\mathbb{Q}(\|\ph_0\|_V).
\end{align}
\last{Let us briefly sketch the arguments} \last{ for completeness. In two spatial dimensions}, \last{employing \eqref{interpol_abs_2d}, \eqref{FGGS7} and \eqref{FGGS8}, it is easy to see that
\begin{align*}
\tau \sum_{k=0}^{n} \| \nabla \Lam_\eps(\ph_{k+1}) \|_{\last{4}}^4 \leq C \tau \sum_{k=0}^{n} \| \nabla \Lam_\eps(\ph_{k+1}) \|^2 \| \Lam_\eps(\ph_{k+1}) \|_{W}^2 \leq \mathbb{Q}(\| \ph_0 \|_V).
\end{align*}
Thanks to the relations} \last{ $\nabla \Lam_\eps(\ph_{k+1})=A\lambda_\eps(\ph_{k+1}) \nabla \ph_{k+1}$,} \last{$\nabla \lambda_\eps(\ph_{k+1}) = \lambda_\eps'(\ph_{k+1}) \nabla \ph_{k+1}$, as well as the boundedness of $\lambda_\eps'$ from \eqref{ass:str:F:lam}, we have
\begin{align*}
\tau \sum_{k=0}^n \| \nabla \ph_{k+1} \|_{\last{4}}^4 + \tau \sum_{k=0}^{n} \| \nabla \lambda_\eps(\ph_{k+1}) \|_{\last{4}}^4 \leq C \tau \sum_{k=0}^n \| \nabla \Lam_\eps(\ph_{k+1}) \|_{\last{4}}^4 \leq \mathbb{Q}(\| \ph_0 \|_V).
\end{align*}
Then, from \eqref{FGGS8} and the identity (for $i,j \in \{1,2\}$)
\begin{align*}
A \partial_{ij}^2 \ph_{k+1} = \frac{1}{\lambda_\eps(\ph_{k+1})} \partial_{ij}^2 \Lam_\eps(\ph_{k+1}) - \frac{1}{\lambda^2(\ph_{k+1})} \partial_i \lambda_\eps(\ph_{k+1}) \partial_j \Lam_\eps(\ph_{k+1}),
\end{align*}
we infer \eqref{FGGS9}.
}
We mention that in the case $d = 3$, a Moser--Alikakos iteration is used in \cite[Proof of Thm.~6.1, p.~718--719]{FGGS} to first establish $\| \ph_{k+1} \|_{L^\infty(\Omega)} \leq C ( \| \ph_0 \|_{L^\infty(\Omega)})$ for all $k = 0, \dots, n$ with $n < N-1$, which is then used to show \eqref{FGGS9}.  In our setting we have the additional source term $S_k := P_\eps(\ph_k)(\s_k + B J \star \Q(\ph_k)) - A (P_\eps F_\eps')(\Q(\ph_k))$ in \eqref{eps_approx}, and at present we cannot directly replicate the Moser--Alikakos argument as $\s_k$ is currently not bounded in $L^\infty(0,T;L^\infty(\Omega))$.

However, let us claim that the control given by \eqref{FGGS9} can be achieved also for $d = 3$
assuming $n=1$ with similar arguments provided
we consider the nutrient variable $\sigma_k$ to possess the stronger regularity
pointed out by Theorem \ref{thm:superstrong} which in turn would give
us $\s_k \in L^\infty(0,T;L^\infty(\Omega))$ (cf.~Remark \ref{Moser}).
In fact, it turns out that the stronger regularity for
the nutrient, in the case $n=1$, just requires further assumption on
$\s_0$ and that we still have
$\ph \in H^1(0,T; H)\cap L^\infty(0,T; V)
\cap L^2(0,T; W)$ also for the case $d=3$ without the assumption $n=1$
(see \eqref{sergio:1}--\eqref{sergio:4} below).

Next, we introduce the functions $\hat \ph_N$, $\ph^+_N$ and $\ph^-_N$ as interpolations of $\{\ph_n\}_{0 \leq n \leq N}$ in the following way:
\begin{align*}
\begin{cases}
\hat \ph_N(t) := \gamma_n(t) \ph_{n} + (1-\gamma_n(t)) \ph_{n+1}, \quad \gamma_n(t) = n+1 - t/ \tau, \\
\ph^+_N(t) := \ph_{n+1}, \\
\ph^-_N(t) := \ph_{n},
\end{cases}
\end{align*}
for $n\tau < t < (n+1) \tau$ and $n = 0,\dots, N-1$.  Then, the estimates \eqref{FGGS5}-\eqref{FGGS9} imply (for both $d \in \{2,3\}$)
\begin{align*}
& \| (\hat \ph_N)' \|_{L^2(0,T; H)}^2  + \| \ph^+_N \|_{L^\infty(0,T;V)}^2 + \| \ph^-_N \|_{L^\infty(0,T; V)}^2 +\|\hat{\ph}_N\|_{L^\infty(0,T;V)}^2 \\[1mm]
& \quad\quad + \tau^{-1} \big ( \|  \hat \ph_N - \ph^+_N \|_{L^2(0,T;H)}^2 + \| \hat \ph_N - \ph^-_N \|_{L^2(0,T;H)}^2 \big ) \leq C,\\[1mm]
& \| \Lam_\eps(\ph^+_N) \|_{L^2(0,T;W)}^2 + \| \Lam_\eps( \ph^+_N) \|_{L^\infty(0,T; V)}^2 \leq C,
\end{align*}
and in the case $d = 2$ we additionally have,
\begin{align*}
 \| \ph^+_N \|_{L^2(0,T;W)}^2 \leq C.
\end{align*}
Now, \eqref{eps_approx}-\eqref{eps_bc} can be rewritten in terms of the interpolating functions
$\hat\ph_N$, $\ph^+_N$, $\ph^-_N$ as follows
\begin{align}
\partial_t\hat\ph_N & =\Delta\Lambda_{\eps}(\ph^+_N)-B\div(m_{\eps}(\ph^-_N)(\nabla J\star\Q(\ph^-_N)))\nonumber\\[1mm]
&\quad+P_{\eps}(\ph^-_N)(\widetilde{\sigma}_N+B J\star\Q(\ph^-_N)) - A(P_\eps F_\eps')(\Q(\ph^-_N)) \quad \text{ a.e.~in } \Omega,
\label{e4}\\[1mm]
\nabla\Lambda_{\eps}(\ph^+_N)\cdot\bold{n}& =B m_{\eps}(\ph^-_N)(\nabla J\star\Q(\ph^-_N))\cdot\bold{n} \quad \text{ a.e.~on } \Gamma,
\label{e5}
\end{align}
where $\widetilde{\sigma}_N$ is defined by $\widetilde{\sigma}_N(t):=\sigma_k$, for $k\tau<t<(k+1)\tau$, $k=0,\dots, N-1$.
Notice that the following strong convergence holds
\begin{align}
&\widetilde{\sigma}_N\to\sigma \text{ in }L^\infty(0,T;H).\label{e6}
\end{align}

Arguing as in \cite{FGGS}, for fixed $\eps>0$, we first pass to the limit as $N \to \infty$
in \eqref{e4}-\eqref{e5} (on account of \eqref{e6} as well), to deduce the existence of a function $\ph_\eps \in H^1(0,T; H) \cap L^\infty(0,T;V) $ with $\Lam_\eps(\ph_\eps) \in L^\infty(0,T; V) \cap L^2(0,T;W)$, and if $d = 2$ also $\ph_\eps \in L^2(0,T;W)$, satisfying
\begin{align*}
\dt \ph_\eps & = \Delta \Lam_\eps(\ph_\eps) - B\div ( m_\eps(\ph_\eps) ( \nabla J \star \Q(\ph_\eps))) \\
& \quad + P_\eps(\ph_\eps)(\sigma + B J \star \Q(\ph_\eps)) - A(P_\eps F_\eps')(\Q(\ph_\eps))
\end{align*}
a.e.~in $Q$, and the boundary condition
\begin{align*}
\nabla \Lam_\eps(\ph_\eps) \cdot \bold{n} = Bm_\eps(\ph_\eps) (\nabla J \star \Q(\ph_\eps)) \cdot \bold{n} \quad \text{ a.e.~on } \Sigma.
\end{align*}
Then, as all the estimates are uniform in $\eps$, we pass to the limit $\eps \to 0$ to obtain a limit function
\begin{align*}
& \ph \in H^1(0,T;H) \cap L^\infty(0,T;V), \, \quad \ph \in L^2(0,T;W) \text{ if } d = 2, \\
& \text{with } \Lam(\ph) \in L^\infty(0,T;V) \cap L^2(0,T;W), \quad  |\ph| \leq 1 \text{ a.e.~in } Q,
\end{align*}
and satisfies
\begin{align*}
\dt \ph = \Delta \Lam(\ph) - B\div ( m(\ph) ( \nabla J \star \ph)) + P(\ph)(\sigma - A F'(\ph) + B J \star \ph) \quad \text{ a.e.~in } Q, \\[1mm]
\nabla \Lam(\ph) \cdot \bold{n} = B m(\ph) (\nabla J \star \ph) \cdot \bold{n} \quad \text{ a.e.~on } \Sigma.
\end{align*}
We refer the reader to \cite{FLR} on the arguments to pass to the limit for the term $(P_\eps F_\eps')(\Q(\ph_\eps))$ and also to deduce that $|\ph| \leq 1$ a.e.~in $Q$ which removes the truncation function $\Q$.

Now, we claim that at the continuous level, by exploiting the above regularity for $\ph$ and
$\Lambda(\ph)$, we can derive the $\ph \in L^2(0,T;W)$-regularity also for $d=3$.
In this direction, let us notice that, for a.e.~$t\in (0,T)$, we have
$\Lambda(\ph(t)) \in W$ so that
\begin{align}\label{sergio:1}
	\partial_i \Lambda(\ph(t))= A \lambda(\ph(t)) \partial_i \ph(t) \in V \quad \hbox{for $i \in \{1,2,3\}.$}
\end{align}
Moreover, using \eqref{ass:m:F} and \eqref{ass:str:F:lam} we realize that $\lambda^{-1}(\ph(t)) \in V$.  Hence, for $i \in \{1,2,3\}$ and a.e.~$t \in (0,T)$, we have
\begin{align}\label{sergio:2}
	\partial_i \ph(t)
	=   \underbrace{\lambda^{-1}(\ph(t))}_{\in V}
	 \underbrace{\lambda(\ph(t)) \partial_i \ph(t)}_{\in V \subset L^6(\Omega)}
	\in W^{1,3/2}(\Omega) \subset L^3(\Omega)
\end{align}
which entails $\nabla \ph(t) \in L^3(\Omega)$ and also $\lambda^{-1}(\ph(t)) \in  W^{1,3}(\Omega)$.  In turn, by \eqref{ass:m:F}, we have
\begin{align}\label{sergio:3}
		\partial_i \ph(t)
	=   \underbrace{\lambda^{-1}(\ph(t))}_{\in  W^{1,3}(\Omega)\cap L^\infty(\Omega)}
	 \underbrace{\lambda(\ph(t)) \partial_i \ph(t)}_{\in V \subset L^6(\Omega)}
	\in H^{1}(\Omega) .
\end{align}
Therefore, we get $\partial_i \ph(t) \in V$ for $i \in \{1,2,3\}$, and so $\ph(t) \in W$ for a.e.~$t \in (0,T)$. Rigorously,
for a.e.~$t \in (0,T)$ and for $i,j \in \{1,2,3\}$, it holds that
\begin{align}
	A \partial_{i k}^2 \ph(t)  =
	\frac 1 {\lam(\ph(t) )}\partial_{i k}^2  \Lam (\ph(t) )
	- \frac {\lam'(\ph(t) )}{\last{A} \lam^3(\ph(t) )} \partial_{i}  \Lam (\ph(t) )\partial_{k}  \Lam (\ph(t) ).
	\label{sergio:first}
\end{align}
On the other hand, \last{we employ the following special case of the Gagliardo--Nirenberg inequality \eqref{e19},
which holds in both two and three spatial dimensions}
%\last{a} Gagliardo--Nirenberg inequality \last{in three spatial dimensions} leads to
\begin{align}
	\label{sergio:second}
	\|\nabla \Lam(\ph(t) )\|_4
	\leq
	C \|\Lam(\ph(t) )\|^{1/2}_{\last{\infty}}
		\|\Lam(\ph(t) )\|^{1/2}_{\last{W}}
\end{align}
\last{(take $p=4$, $j=1$, $q=\infty$, $m=2$, $r=2$, $d\in\{2,3\}$, yielding $\alpha=1/2$ in \eqref{e19}).}
Therefore, the regularity $\Lam (\ph) \in L^2(0,T; W)$, the bound $|\ph|\leq 1$,
\eqref{sergio:first} and \eqref{sergio:second} imply that also for $d=3$ we have
\begin{align}\label{sergio:4}
	\ph \in L^2(0,T; W).
\end{align}

Hence, the proof of \eqref{imp_reg1} is concluded. The next step is now to prove \eqref{imp_reg2}, provided that
the stated additional assumptions on $\sigma_0$ and on the nutrient mobility $n$ are satisfied.
This is achieved by relying on the improved regularity for $\varphi$ given by \eqref{imp_reg1}. We proceed by means of formal estimates, which can be made rigorous by applying a Galerkin approximation to \eqref{eq_second}.

We multiply \eqref{eq_second} by $-\Delta \s$ (which is a valid test function in the Galerkin approximation) and integrate
over $\Omega$ and by parts to obtain that
\begin{align}\label{e7}
\frac 12 \frac d {dt} \|\nabla \s\|^2 + \|\sqrt{n(\ph)}\Delta \s\|^2
& = - \iO n'(\ph) \nabla \ph \cdot \nabla \s \Delta \s \notag \\ &  \quad + \iO P(\ph)(\s  - AF'(\ph)+ B J\star\ph )\Delta \s,
\end{align}
where the terms on the right-hand side are denoted by $I_1$ and $I_2$. By means of \eqref{ass:P} along with the Young inequality and the previous estimate, we obtain
that
\begin{align}\label{e8}
	|I_2|	\leq \delta \|\Delta \s\|^2 + C_\delta (1 + \| \s\|^2),
\end{align}
for a positive $\delta$ yet to be determined.
As for the first term, if $d = 2$, we use the boundedness of $n'$, the Gagliardo--Nirenberg inequality \eqref{interpol_abs_2d}
and the elliptic estimate
\begin{align}\label{el:s}
\|\s\|_{W} \leq C \big ( \|\Delta \s\| + \|\s\| \big )
\end{align}
to find that
\begin{align}\label{e9}
|I_1|	&\leq C \|\nabla \ph\|_4 \|\nabla \s\|_4 \|\Delta \s\|\notag\\
& \leq \delta \|\Delta \s\|^2 + C_\delta \|\nabla \ph\| \|\ph\|_{W} \|\nabla \s\| \|\s\|_{W} \notag\\
& \leq \delta \|\Delta \s\|^2 	+ C_\delta \|\nabla \ph\| \| \ph\|_{W} \|\nabla \s\| \big (\|\Delta\s\|+ \| \s\|\big ) \notag \\
&\leq 2\delta  \|\Delta \s\|^2 + C_\delta \|\nabla \ph\|^2 \|\ph\|_{W}^2 \|\nabla \s\| \big (\|\s\| + \|\nabla \s\| \big ).
\end{align}
It is worth noting that in the last term, accounting for the above estimates, we have that $ \|\nabla \ph\|\in L^\infty(0,T)$, $\| \ph \|_{W}^2 \in L^1(0,T)$, and $\| \s \| \in L^\infty(0,T)$ due to \eqref{regweak}.  Therefore, we insert \eqref{e8}-\eqref{e9} into \eqref{e7}, fix $\delta\in(0,1)$ small enough, use \eqref{ass:n} and apply Gronwall's lemma to get the bound
\begin{align}
\| \nabla \s \|_{L^\infty(0,T;H)} + \|\Delta \s\|_{L^2(0,T;H)} \leq \mathbb{Q}(\| \ph_0 \|_V,\| \s_0 \|_V). \label{est_s}
\end{align}
Then, the elliptic estimate \eqref{el:s} yields the $L^2(0,T;W)$-bound for $\s$.  Let us point out that if $d=3$, the argument leading to \eqref{est_s} does not work. The obstacle is the estimate of the term $I_1$ on account of the known regularity for $\ph$ and for $\s$.  On the other hand, if $n=1$, we simply have $I_1=0$, while the estimate for $I_2$ remains unchanged. So, the case $d=3$ and $n=1$ easily follows, and in particular it does not rely on the $L^2(0,T;W)$-regularity for $\ph$.

An estimate for $\dt \s$ can also be deduced, by means of a comparison argument in \eqref{eq_second}.
Indeed, we can write \eqref{eq_second} in the form
\begin{align}\label{e10}
&\dt \s=n(\ph)\Delta\s+n^\prime(\ph)\nabla\ph\cdot\nabla\s
- P(\ph)(\s - AF'(\ph)+ B J\star\ph ),
\end{align}
and estimate the second term on the right-hand side (present only in the case $d = 2$) as
\begin{align*}
\| n'(\ph)\nabla\ph\cdot\nabla\sigma \| &\leq C \|\nabla\ph\|_4 \|\nabla\s\|_4
\leq C \| \nabla\ph \|^{1/2} \|\ph\|_{W}^{1/2}\|\nabla\s\|^{1/2} \| \s\|_{W}^{1/2}\\
&\leq \mathbb{Q}(\|\ph_0\|_V,\|\s_0\|_V) \|\ph\|_{W}^{1/2} \| \s\|_{W}^{1/2},
\end{align*}
which, on account of the $L^2(0,T;W)$-regularity for $\ph$ and for $\s$, entails that
\begin{align*}
&\| n^\prime(\ph)\nabla\ph\cdot\nabla\sigma\|_{L^2(0,T;H)}\leq \mathbb{Q}(\|\ph_0\|_V,\|\s_0\|_V).
\end{align*}
Hence, from this estimate, \eqref{est_s}, \eqref{ass:n}, \eqref{ass:str:m:n} and \eqref{e10} we have for $d = 2$ the estimate
\begin{align}
&\| \dt \s\|_{L^2(0,T;H)}\leq  \mathbb{Q}(\|\ph_0\|_V,\|\s_0\|_V).\label{e12}
\end{align}
In the case $d=3$,
since we are assuming $n=1$, the second term on the right-hand side of \eqref{e10} is not present, and the $L^2(0,T;H)$-regularity for $\dt \s$ proceeds with a similar argument.
Therefore, \eqref{imp_reg2} is proven.

It then remains to establish the improved regularity \eqref{imp_reg3} for $\s$ for the case $d\in\{2, 3\}$ and $n=1$. To this aim, we formally differentiate \eqref{eq_second} in time and test with $\dt \s$, which can be made rigorous by returning to a Galerkin approximation of \eqref{eq_second} for $\s$ treating $\ph$ possessing the above improved regularity $H^1(0,T;H)$ as given data, and differentiating the Galerkin approximation in time.  Since $n = 1$ and $\s_0 \in W$, the latter implies $\dt \s_0:= \Delta \s_0 - P(\ph_0)(\s_0 - A F'(\ph_0) + B J \star \ph_0) \in H$,
and we obtain, after integration over $\Omega$,
\begin{align*}
\frac{1}{2} \frac{d}{dt} \| \dt \s \|^2 + \int_\Omega P(\ph) |\dt \s|^2 + |\nabla \dt \s |^2
	 &= - \int_\Omega  \big( P'(\ph) \s \dt \ph - B P(\ph) (J \star \dt \ph) \big) \dt \s \\
& \quad - \int_\Omega \big( B P'(\ph) \dt \ph (J \star \ph)  - A(P F')'(\ph) \dt \ph \big) \dt \s \\
&  \leq C \big ( 1 + \| \s \|_{W}^2 \big ) \| \dt \s \|^2  + C \| \dt \ph \|^2,
\end{align*}
\last{where we have used \eqref{ass:str:P}} \last{for the terms involving $P^\prime$ and $(PF')'$}. By Gronwall's lemma we have $\s \in W^{1,\infty}(0,T;H) \cap H^1(0,T;V)$.  Then, by a comparison of terms in \eqref{eq_second} with $n = 1$, we see that $\Delta \s \in L^\infty(0,T;H)$ and so by elliptic regularity it holds that
\begin{align*}
\s \in L^\infty(0,T;W).
\end{align*}
The proof of Theorem \ref{thm:strong} is complete. \qed

\begin{rem}\label{Moser}{\upshape
We point out that estimate \eqref{FGGS9}, which yields the control
of the discretized solutions $\varphi_N^+$ in $L^2(0,T;W)$ can be recovered
also for the case $d=3$, \last{provided \eqref{ass:str:P} is satisfied which leads to the improved regularity \eqref{imp_reg3} for $\s$}. Indeed, this allows to reproduce the Moser--Alikakos type argument of \cite[Proof of Thm 6.1]{FGGS}
which establishes the crucial bound $\| \ph_{k+1} \|_{L^\infty(\Omega)} \leq C ( \| \ph_0 \|_{L^\infty(\Omega)})$ for all $k = 0, \dots, n$ with $n < N-1$. Let us just sketch the main points of this argument.
We return to the time-discrete problem \eqref{eps_approx}-\eqref{eps_bc} taking the above improved regularity
for $\s$ into account, which implies that the source term $S_k = P_\eps(\ph_k)(\s_k + BJ \star \Q(\ph_k)) - A (P_\eps F_\eps')(\Q(\ph_k))$ is now uniformly bounded in $L^{\infty}(0,T;L^\infty(\Omega))$.  Appealing now to the Moser--Alikakos computation in \cite[p.~718--719]{FGGS}, which involves testing \eqref{eps_approx} with $\ph_{k+1}^{p_j - 1}$ where $p_j := 2^j$, integrating over $\Omega$ and summing the resulting identity over $k$, for $k = 0, \dots, n$ with $0 \leq n \leq N-1$, we obtain
\begin{equation}\label{FGGS6.13}
\begin{aligned}
& \frac{1}{p_j} \int_\Omega \ph_{n+1}^{p_j} + \frac{4 \alpha_0}{p_j p_j'} \tau \sum_{k=0}^n \int_\Omega |\nabla ( \ph_{k+1}^{p_j/2} )|^2 \\
& \quad \leq \frac{1}{p_j} \int_\Omega \ph_0^{p_j} + \tau \sum_{k=0}^{n} \Big [ B(m_\eps(\ph_k) (\nabla J \star \Q(\ph_k)), \nabla (\ph_{k+1}^{p_j - 1})) + (S_k, \ph_{k+1}^{p_j - 1}) \Big ],
\end{aligned}
\end{equation}
where $p_j' = \frac{p_j}{p_j - 1}$ is the conjugate of $p_j$.  The new element in the analysis is the last term on the right-hand side which, owing to the uniform boundedness of $S_k$ and Young's inequality, can be handled as
\begin{align*}
|(S_k, \ph_{k+1}^{p_j - 1})| \leq C \| \ph_{k+1}^{p_j - 1} \|_1 \leq C \| \ph_{k+1}^{p_j / 2} \|^{2\frac{p_j - 1}{p_j}} \leq C \frac{\delta}{p_j'} \int_\Omega | \ph_{k+1}^{p_j/2}|^2 + \frac{C \delta^{1-p_j}}{p_j},
\end{align*}
with a constant $C$ independent of $\delta$, the index $j$ and $N$.  Choosing $\delta = p_j$ and noting that $p_j^{-p_j} \to 0$ as $j \to \infty$ we infer that
\begin{align*}
|(S_k, \ph_{k+1}^{p_j-1})| \leq C\frac{p_j}{p_j'} \int_\Omega |\ph_{k+1}^{p_j/2}|^2 + C
\end{align*}
with a constant $C$ independent of index $j$ and $N$.  The convolution term in \eqref{FGGS6.13} can be handled as in \cite[(6.14)]{FGGS}
so that, after multiplying \eqref{FGGS6.13} by $p_j$ and estimating the convolution term and the source term,
we arrive at the following inequality
\begin{align*}
\int_\Omega | \ph_{n+1}^{p_j/2}|^2 + \frac{\alpha_0}{p_j'} \tau \sum_{k=0}^n \int_\Omega | \nabla ( \ph_{k+1}^{p_j/2})|^2 \leq \int_\Omega |\ph_0^{p_j/2}|^2 + C p_j^2 \tau \sum_{k=0}^n \int_\Omega |\ph_{k+1}^{p_j/2}|^2 + C p_j\tau,
\end{align*}
which is exactly \cite[(6.15)]{FGGS}.  Then, we may argue as in \cite{FGGS} to deduce that \eqref{FGGS9} also holds in the case $d = 3$ with a modified constant $\mathbb{Q}(\| \ph_0\|_V, \| \s_0 \|_{W})$ on the right-hand side. }
\end{rem}

\subsubsection{Proof of Theorem~\ref{thm:superstrong}}
We first consider the case $d = 2$ and return to the time-discrete problem \eqref{eps_approx}-\eqref{eps_bc}, taking the improved regularity $\s \in H^1(0,T;H)$ into account. Consider the differences between steps $k$ and $k-1$,  test the resulting identity by $\pdt \ph_{k+1} := \tau^{-1}( \ph_{k+1} - \ph_k)$ and sum over $k = 1, \dots, n$ with $n \leq N-1$, to get
\begin{align}
& \sum_{k=1}^n (\pdt \ph_{k+1} - \pdt \ph_{k}, \pdt \ph_{k+1})\notag \\
& = - \sum_{k=1}^n \big ( \nabla \Lam_\eps(\ph_{k+1}) - \nabla \Lam_\eps(\ph_k), \nabla \pdt \ph_{k+1} \big ) + B \sum_{k=1}^n \tau \big ( \pdt (m_\eps(\ph_k)(\nabla J \star \Q(\ph_k))),  \nabla \pdt \ph_{k+1} \big ) \notag\\
& \qquad + \tau  \sum_{k=1}^n \big ( \pdt [(P_\eps(\ph_k)(\sigma_k + B J \star \Q(\ph_k))) - A (P_\eps F_\eps')(\Q(\ph_k))], \pdt \ph_{k+1} \big ),\label{e11}
\end{align}
where we use the notation $\pdt f(\ph_k) = \tau^{-1} (f(\ph_k) - f(\ph_{k-1}))$.  \last{Let us collect some useful inequalities established in \cite{FGGS}, more precisely \eqref{FGGS:4.71} and \eqref{FGGS:4.73} below can be derived from equations (4.71) and (4.73) of \cite{FGGS}, respectively:
\begin{align}
& \tau \sum_{k=1}^n \Big ( \nabla \pdt \Lam_\eps(\ph_{k+1}), \nabla \pdt \ph_{k+1} \Big ) \notag  \\
& \quad \geq \tfrac{\alpha_0 \tau}{4} \sum_{k=1}^n \| \nabla \pdt \ph_{k+1} \|^2 - C \tau \sum_{k=1}^n \| \Lam(\ph_k) \|_{W}^2 \| \pdt \ph_{k+1} \|^2 - C \tau \sum_{k=1}^n \| \pdt \ph_{k+1} \|^2, \label{FGGS:4.71} \\
& \sum_{k=1}^n (\pdt (m_\eps(\ph_k) (\nabla J \star \Q(\ph_k))), \nabla \pdt \ph_{k+1} \big ) \leq \delta \tau \sum_{k=1}^n \| \pdt \nabla \ph_{k+1} \|^2 + C_\delta \tau \sum_{k=1}^n \| \pdt \ph_k \|^2,\label{FGGS:4.73}
\end{align}
where $\delta$ denotes a positive constant whose value is yet to be determined.}  Firstly, from \eqref{FGGS:4.73} with $\delta = \frac{\alpha_0 A}{16 B}$ and \eqref{FGGS7} it holds that
\begin{align}\label{e15}
B \tau \sum_{k=1}^n  \big ( \pdt (m_\eps(\ph_k)(\nabla J \star \Q(\ph_k))), \nabla \pdt \ph_{k+1} \big ) \leq \tfrac{\alpha_0 A \tau}{16} \sum_{k=1}^n \| \nabla \pdt \ph_{k+1} \|^2 + C.
\end{align}
Next, we can easily check that the following uniform (w.r.t.~$\eps$) Lipschitz continuity property of $P_\eps F_\eps'$
holds
\begin{align}\label{e13}
&|P_\eps(s_2) F_\eps'(s_2)-P_\eps(s_1) F_\eps'(s_1)|\leq L|s_2-s_1|\qquad\forall s_1,s_2\in\mathbb{R},
\end{align}
where the positive constant $L$ is independent of $\eps>0$ and is given by $L=k \lambda_\infty\sqrt{P_\infty} +\|(PF^\prime)^\prime\|_{L^\infty(-1,1)}$, in case \eqref{ass:P} holds, or by $L=\| PF^{\prime\prime}\|_{L^\infty(-1,1)}+\|(PF^\prime)^\prime\|_{L^\infty(-1,1)}$, in case (A4*) holds. Indeed, let us consider, e.g., the case $|s_1|<1-\eps$ and $s_2\geq 1-\eps$ (the other cases can be handled similarly), we then have
\begin{align*}
&|P_\eps(s_2) F_\eps'(s_2)-P_\eps(s_1) F_\eps'(s_1)|\notag\\[1mm]
& \quad =|P(1-\eps)F^\prime(1-\eps)
+P(1-\eps)F^{\prime\prime}(1-\eps)(s_2-(1-\eps))-P(s_1)F^\prime(s_1)|\notag\\[1mm]
&\quad \leq \| PF^{\prime\prime}\|_{L^\infty(-1,1)}(s_2-(1-\eps))+\|(PF^\prime)^\prime\|_{L^\infty(-1,1)}
((1-\eps)-s_1)\leq L(s_2-s_1),
\end{align*}
where we have assumed (A4*). By employing \eqref{e13}, the Lipschitz continuity of $\Q$, the Gagliardo--Nirenberg inequality, estimate \eqref{FGGS7}, as well as $\sigma \in C^0([0,T];L^4(\Omega))$ (which is due to the application of Aubin--Lions lemma with the compact embedding $V \subset \subset L^4(\Omega)$),
and also the identity $\pdt(f_k g_k)=f_k\pdt g_k+g_{k-1}\pdt f_k$, we get
\begin{equation}\label{e14}
\begin{aligned}
& \tau \sum_{k=1}^n  \big ( \pdt [(P_\eps(\ph_k)(\sigma_k  + B J \star \Q(\ph_k))) - A (P_\eps F_\eps')(\Q(\ph_k))], \pdt \ph_{k+1} \big ) \\
& \quad \leq C \tau \sum_{k=1}^n \| \pdt \ph_{k+1} \| \| \pdt \ph_{k} \| + \tau \sum_{k=1}^n \big (P_\eps(\ph_{k}) \pdt \sigma_{k} + \sigma_{k-1} \pdt P_\eps(\ph_{k}), \pdt \ph_{k+1} \big )  \\
& \quad \leq C \tau \sum_{k=1}^n \| \pdt \ph_{k+1} \| \big (\| \pdt \ph_k \| + \| \pdt \s_k \| \big ) + C \tau \sum_{k=1}^n \| \s_{k-1} \|_4 \| \pdt \ph_k \| \| \pdt \ph_{k+1} \|_4 \\
& \quad \leq C \tau \sum_{k=1}^n \Big(\| \pdt \ph_{k+1} \|^2 + \|\pdt \ph_k \|^2 + \| \pdt \s_k \|^2 + \| \pdt \ph_{k+1} \| \| \nabla \pdt \ph_{k+1} \| \Big )
\\
& \quad \leq C + C \tau \sum_{k=1}^n \| \pdt \s_k \|^2 + \tfrac{\alpha_0 A \tau}{16} \sum_{k=1}^n \| \nabla \pdt \ph_{k+1} \|^2.
\end{aligned}
\end{equation}
As far as the first term on the right hand side of \eqref{e11} is concerned, by \eqref{FGGS:4.71} and \eqref{FGGS7}, it holds that
\begin{equation}\label{FGGS4.71ours}
\begin{aligned}
& \sum_{k=1}^n \big ( \nabla \Lam_\eps(\ph_{k+1}) - \nabla \Lam_\eps(\ph_k), \nabla\pdt \ph_{k+1} \big ) \\
& \quad \geq \tfrac{\alpha_0 A\tau}{4} \sum_{k=1}^n \| \nabla \pdt \ph_{k+1} \|^2 - C \tau \sum_{k=1}^n \| \Lam_\eps(\ph_{k}) \|_{W}^2 \| \pdt \ph_{k+1} \|^2 - C,
\end{aligned}
\end{equation}
and on account of \eqref{FGGS7}, of estimate $\| \nabla \ph_{k} \| = \| \nabla \Lam_\eps(\ph_k) / \lambda_\eps(\ph_k) \|$, and of \eqref{e1},  we obtain
\begin{equation}\label{e16}
\begin{aligned}
\tau\|\Lambda_{\eps}(\ph_k)\|_{W}^2 & \leq C \tau \big ( \| \Delta \Lam_\eps(\ph_k) \|^2 + \| \Lam_{\eps}(\ph_k) \|_V^2 + \| \nabla \Lam_{\eps}(\ph_k) \cdot \bold{n} \|_{H^{1/2}(\Gamma)}^2 \big ) \\
& \leq C \tau \big ( \| \pdt \ph_{k} \|^2 + \| \div (m_\eps(\ph_{k-1})( \nabla J \star \Q(\ph_{k-1})) \|^2 \\
& \; \quad + \| P_\eps(\ph_{k-1})(\s_{k-1}  + B J \star \Q(\ph_{k-1})) - A (P_\eps F_\eps')(\Q(\ph_{k-1}))\|^2  \\
& \; \quad + \| \ph_{k} \|_V^2 + \| m_\eps(\ph_{k-1}) \|_{L^\infty(\Gamma)}^2 \|  J \star \Q(\ph_{k-1}) \|_{W}^2 \\
& \; \quad + \| m_\eps(\ph_{k-1}) \|_{V}^2 \| (\nabla J \star \Q(\ph_{k-1}))  \cdot \bold{n} \|_{L^\infty(\Gamma)}^2 \big) \\
& \leq \frac{C}{\tau}\|\ph_k-\ph_{k-1}\|^2+C\tau,
\end{aligned}
\end{equation}
cf.~\cite[Proof of Thm.~3.6, Step 3, p.~704]{FGGS}. Inserting estimates \eqref{e15}, \eqref{e14}, \eqref{FGGS4.71ours} and \eqref{e16}  into
\eqref{e11}
we find that
\begin{equation}\label{FGGS10}
\begin{aligned}
& \tfrac{1}{2} \| \pdt \ph_{n+1} \|^2 + \tfrac{1}{2} \sum_{k=1}^n \| \pdt \ph_{k+1} - \pdt \ph_{k} \|^2 + \tfrac{\alpha_0 A\tau}{8} \sum_{k=1}^n \| \nabla \pdt \ph_{k+1} \|^2 \\
&\quad \leq C\tau \sum_{k=1}^n \| \Lam_\eps(\ph_{k}) \|_{W}^2  \| \pdt \ph_{k+1} \|^2 + C \tau \sum_{k=1}^n \| \pdt \sigma_{k} \|^2 + \tfrac{1}{2} \| \pdt \ph_1 \|^2 + C \\
& \quad \leq \frac{C}{\tau} \sum_{k=0}^{n-1} \|\ph_{k+2}-\ph_{k+1}\|^2\Big\|\frac{\ph_{k+1}-\ph_k}{\tau}\Big\|^2
+\frac{C}{\tau}\sum_{k=1}^{n}\|\ph_{k+1}-\ph_k\|^2
\\
& \qquad +C \tau \sum_{k=1}^n \| \pdt \sigma_{k} \|^2 + \tfrac{1}{2} \| \pdt \ph_1 \|^2 + C.
\end{aligned}
\end{equation}
A similar argument as in \cite[(4.83)]{FGGS} shows that
\begin{align*}
\| \pdt \ph_1 \| & \leq C \| \Delta \Lam_\eps(\ph_0) \| + C \| \div (m_\eps(\ph_0) (\nabla J \star \Q(\ph_0)) \| \\
& \quad + C \|P_\eps(\ph_0) (\sigma_0  + B J \star \ph_0) - A(P_\eps F_\eps')(\Q(\ph_0))\|  \\
& \leq C \big ( 1 +  \| \sigma_0 \| + \| \ph_0 \|_{W} \big ).
\end{align*}
By the Cauchy--Schwarz inequality, the fundamental theorem of calculus, and recalling estimate \eqref{e12}, we have that
\begin{align*}
\tau \sum_{k=1}^n \| \pdt \sigma_{k} \|^2 \leq \| \dt \sigma \|_{L^2(0,T;H)}^2 \leq \mathbb{Q}(\|\ph_0\|_V,\|\sigma_0\|_V).
\end{align*}
Hence, by employing this last estimate in \eqref{FGGS10} the discrete Gronwall lemma entails that
\begin{equation}\label{FGGS11}
\begin{aligned}
\| \pdt \ph_{n+1} \|^2 + \tau \sum_{k=1}^n \| \nabla \pdt \ph_{k+1} \|^2  + \sum_{k=1}^n \| \pdt \ph_{k+1} - \pdt \ph_{k} \|^2 \leq \mathbb{Q}(\|\ph_0\|_{W},
\|\sigma_0\|_V).
\end{aligned}
\end{equation}
This implies that the interpolation functions $\hat \ph_N$ introduced in the proof of Theorem \ref{thm:strong} now also satisfy
\begin{align*}
\| (\hat \ph_N)' \|_{L^\infty(0,T;H)\cap L^2(0,T;V)}
	\leq
\mathbb{Q}(\|\ph_0\|_{W},\|\sigma_0\|_V).
\end{align*}
\last{Returning to \eqref{e16}, which can be written as
\begin{align*}
\| \Lam_\eps(\ph_{k+1}) \|_{W} \leq C \big ( 1 + \| \pdt \ph_{k+1} \| \big )
\end{align*}
} we infer that
\begin{align}\label{inftyH2}
\| \Lam_\eps(\ph^+_N) \|_{L^\infty(0,T;W)} \leq C, \quad \| \ph^+_N \|_{L^\infty(0,T;W)} \leq C.
\end{align}
Hence, after passing to the limit as $N \to \infty$ and then as $\eps \to 0$, we deduce \eqref{imp_reg4}.

We now turn to the regularity \eqref{imp_reg3} for $\s$ in the case $d = 2$, \last{where in contrast to Theorem \ref{thm:strong}, we now allow for a non-constant mobility $n(\ph)$}.  By formally differentiating \eqref{eq_second} in time and testing with $\dt \sigma$ we obtain
\begin{align*}
& \frac{1}{2} \frac{d}{dt} \| \dt \s \|^2 + \iO n(\ph) |\nabla \dt \s |^2 + \iO P(\ph) |\dt \s |^2 \\
& \quad =
- \iO n'(\ph) \dt \ph \nabla \s \cdot \nabla \dt \s
-\iO \big(P'(\ph)\s  \dt \ph + BP(\ph) (J \star \dt \ph) \big)\dt \s \\
& \qquad
- \iO   \big(B P'(\ph) \dt \ph (J \star \ph) - A(P F')'(\ph) \dt \ph \big) \dt \s,
\end{align*}
where the right-hand side is bounded above by
\begin{align*}
& \delta \| \nabla \dt \s \|^2 + C \| \dt \ph \|_{4}^2 \| \nabla \s \|_{4}^2 + C\| \s \|_4 \| \dt \ph \|_4 \| \dt \s \| + C \| \dt \ph \|^2 + C \| \dt \s \|^2.
\end{align*}
Then, by the Gagliardo--Nirenberg inequality and the fact that $\dt \ph \in L^\infty(0,T; H) \cap L^2(0,T; V)$ and $\s \in L^\infty(0,T; V) \cap L^2(0,T;W)$ \last{due to \eqref{imp_reg2}},
\begin{align*}
\| \dt \ph \|_{4}^2 \| \nabla \s \|_{4}^2 &\leq C \| \dt \ph \| \| \dt \ph \|_V \| \nabla \s \| \| \s \|_{W}
\notag\\
& \leq \mathbb{Q}(\|\ph_0\|_V,\|\s_0\|_V) \| \dt \ph \|_V^2
 + \mathbb{Q}(\|\ph_0\|_{W},\|\s_0\|_V) \| \s \|_{W}^2, \\[1mm]
\| \s \|_4 \| \dt \ph \|_4 \| \dt \s \| &\leq \mathbb{Q}(\|\ph_0\|_V,\|\s_0\|_V)
 \| \dt \ph \|_V^2 + C \| \dt \s \|^2.
\end{align*}
Hence, choosing $\delta$ sufficiently small, and using \eqref{ass:n}, \eqref{ass:str:m:n} and \eqref{e12}, we infer by Gronwall's lemma that
\begin{align*}
\s \in W^{1,\infty}(0,T;H) \cap H^1(0,T;V).
\end{align*}
By comparison of terms in \eqref{eq_second}, it is easy to see that $\Delta \s \in L^\infty(0,T;H)$, and by a classical elliptic regularity argument, we also
deduce $\s \in L^\infty(0,T;W)$.

Let us briefly point out the modifications to the arguments for attaining the regularity \eqref{imp_reg4} in the case $d = 3$.  Thanks to Theorem \ref{thm:strong} we have $\s \in L^\infty(0,T;W)$, and so in \eqref{e14} the term $(\s_{k-1} \pdt P_\eps(\ph_k), \pdt \ph_{k+1})$ can be controlled by $C \| \pdt \ph_k \| \| \pdt \ph_{k+1} \|$, so that \eqref{e14} remains valid.  Moreover, under the assumption $\lambda = m F'' = \alpha_0$ is a constant, we have
\begin{equation}\label{FGGS4.71bis}
\begin{aligned}
& \sum_{k=1}^n \big ( \nabla \Lam_\eps(\ph_{k+1}) - \nabla \Lam_\eps(\ph_k), \nabla \pdt \ph_{k+1} \big ) \\
& \quad = \sum_{k=1}^n A \alpha_0 \big ( \nabla (\ph_{k+1} - \ph_k), \nabla \pdt \ph_{k+1} \big ) = A \alpha_0 \tau \sum_{k=1}^n \| \nabla \pdt \ph_{k+1} \|^2
\end{aligned}
\end{equation}
replacing \eqref{FGGS4.71ours}.  As \eqref{e16} remains unchanged, we obtain instead of \eqref{FGGS10} the inequality
\begin{equation}\label{e17}
\begin{aligned}
& \tfrac{1}{2} \| \pdt \ph_{n+1} \|^2 + \tfrac{1}{2} \sum_{k=1}^n \| \pdt \ph_{k+1} - \pdt \ph_k \|^2 + \tfrac{7\alpha_0 A \tau}{8} \sum_{k=1}^n \| \nabla \pdt \ph_{k+1} \|^2 \\& \quad \leq C \tau \sum_{k=1}^n \| \pdt \s_k \|^2 + \tfrac{1}{2} \| \pdt \ph_1 \|^2 + C \leq \mathbb{Q}(\|\ph_0\|_{W},\|\sigma_0\|_V),
\end{aligned}
\end{equation}
and the improved regularity \eqref{imp_reg4} follows along similar arguments as in the case $d = 2$. \qed

\subsubsection{Proof of Theorem~\ref{thm:cts1}}
Let us denote for convenience
\begin{align*}
\ph:= \ph_1-\ph_2, \quad	\s:= \s_1-\s_2,	\quad \hat{\Lam}:=\Lam_1-\Lam_2,
	\quad S := S_1 - S_2,
\end{align*}
where $S_i = P(\ph_i)(\s_i - A F'(\ph_i) + B J \star \ph_i)$,
$\Lam_i =\Lam(\ph_i)$, $n_i = n(\ph_i)$ and $m_i = m(\ph_i)$ for $i \in \{1,2\}$. First let us consider the case $d = 2$.  We take the difference of \eqref{eq_second} tested by $\s$, which yields, after integration over $\Omega$ and for some $\delta > 0$ to be fixed later,
\begin{equation}\label{Cts:1}
\begin{aligned}
\frac{1}{2} \frac{d}{dt} \| \s \|^2 + \iO n_2 |\nabla \s |^2 & = - \iO S \s + \iO(n_1 - n_2) \nabla \s_1 \cdot \nabla \s
\\
& \leq - \iO S \s  + C_\delta \| \nabla \s_1 \|_{4}^2 \| \ph \|_{4}^2 + \delta \| \nabla \s \|^2 \\
& \leq - \iO S \s  +C_\delta\|\nabla\s_1\|\|\s_1\|_{W}\|\ph\|\|\ph\|_V+ \delta \| \nabla \s \|^2\\
& \leq - \iO S \s +C_\delta\|\nabla\s_1\|\|\s_1\|_{W}\|\ph\|^2 \\[1mm]
& \quad
+\delta (\|\nabla\ph\|^2 + \| \nabla \s \|^2)
+C_\delta\|\nabla\s_1\|^2\|\s_1\|_{W}^2\|\ph\|^2
\end{aligned}
\end{equation}
on account of the fact that $\s_1 \in L^\infty(0,T;V)\cap L^2(0,T;W)$ and of the Gagliardo--Nirenberg inequality.
Moreover, we have used \eqref{ass:str:m:n} and the bound $|\varphi_i|\leq 1$, for $i=1,2$.
 Next, testing the difference of \eqref{eq_first} with $\ph$ yields
\begin{equation}\label{Cts:2}
\begin{aligned}
\frac{1}{2} \frac{d}{dt} \| \ph \|^2 + \iO \nabla \hat{\Lam} \cdot \nabla \ph = \iO B ( m_1(\nabla J \star \ph_1) - m_2(\nabla J \star \ph_2)) \cdot \nabla \ph + \iO S \ph.
\end{aligned}
\end{equation}
Thanks to the formulae
\begin{align}
\label{cts:Lam}
\nabla \hat \Lam =
\nabla \Lam(\ph_1) - \nabla \Lam(\ph_2) &= A (\lam(\ph_1)-\lam(\ph_2))\nabla\ph_1	+ A  \lam(\ph_2) \nabla \ph,
\\
m_1 (\nabla J \star \ph_1) - m_2 (\nabla J \star \ph_2) & = (m_1 - m_2) (\nabla J \star \ph_1)	+ m_2 (\nabla J \star \ph),
\end{align}
and
\begin{align}
\notag
S & = (P(\ph_1)- P(\ph_2)) (\s_1  + B J\star\ph_1) + P(\ph_2)(\s  + B J\star\ph)
\\
& \quad - A(P(\ph_1)F'(\ph_1)- P(\ph_2)F'(\ph_2)),
\label{defn:S}
\end{align}
from \eqref{Cts:2} and \eqref{ass:m:F} we get
\begin{align*}
& \frac{1}{2} \frac{d}{dt} \| \ph \|^2 + A \alpha_0 \| \nabla \ph \|^2 \\
& \quad \leq C \| \ph \|_4 \| \nabla \ph_1 \|_4 \| \nabla \ph \| + C \| \ph \| \| \nabla \ph \| + C(1+\Vert\s_1\Vert_W) \| \ph \|^2 + C \| \s \| \| \ph \| \\
& \quad \leq  2\delta \| \nabla \ph \|^2+C_\delta\|\ph\|\|\ph\|_V\|\nabla\ph_1\|\|\ph_1\|_{W}
+  C(1+\Vert\s_1\Vert_W)\| \ph \|^2 + C \| \s \|^2 \\
&\quad \leq 3\delta \| \nabla \ph \|^2+ C_\delta(1+\|\ph_1\|_{W}^2+\Vert\s_1\Vert_W) \| \ph \|^2 + C \| \s \|^2\, ,
\end{align*}
on account of the fact that $|\ph_1|\leq 1 \,\, a.e.$ in $Q$, $\ph_1,\s_1 \in L^\infty(0,T;V)\cap L^2(0,T;W)$ and the Gagliardo--Nirenberg inequality.
Moreover, in the first inequality we have used \eqref{ass:str:m:n}, \eqref{ass:str:F:lam} and \eqref{ass:str:P}. Estimating the term $S \s$ in \eqref{Cts:1} in a similar fashion using the Lipschitz continuity of $P$ and $PF'$, and adding the result to the above inequality yields
\begin{align}\label{cts:diff:1}
\frac{d}{dt} \Big ( \| \ph \|^2 + \| \s \|^2 \Big ) + \| \nabla \ph \|^2 + \| \nabla \s \|^2
\leq C (1+\|\ph_1\|_{W}^2+\|\s_1\|_{W}^2)\| \ph \|^2 + C \| \s \|^2\,,
\end{align}
on account of \eqref{ass:n}, and after choosing $\delta$ sufficiently small.  Then, Gronwall's lemma applied to \eqref{cts:diff:1} gives the $L^\infty(0,T;H) \cap L^2(0,T;V)$-estimate of \eqref{cts1} for the case $d = 2$.  For the $H^1(0,T;V^*)$-estimate we obtain from the difference of \eqref{eq_first} and \eqref{eq_second}
\begin{align*}
\langle \dt \ph, v \rangle_V & = - \int_\Omega \big( \nabla \hat{\Lambda} - B(m_1 (\nabla J \star \ph_1) - m_2(\nabla J \star \ph_2))\big) \cdot \nabla v
+\iO S v, \\
\langle \dt \s, w \rangle_V & = -\int_\Omega ((n_1 - n_2) \nabla \s_1 + n_2 \nabla \s) \cdot \nabla w  -\iO  S w,
\end{align*}
for any $v, w \in V$.  In light of the above estimates, as well as the calculations in \eqref{Cts:1}, we readily infer
\begin{align*}
\| \dt \ph \|_{L^2(0,T;V^*)} & \leq C\| \ph \|_{L^\infty(0,T;H)}^{1/2} \| \ph \|_{L^2(0,T;V)}^{1/2} + C\| \ph \|_{L^2(0,T;V)} + C \| \s \|_{L^2(0,T;H)}, \\
\| \dt \s \|_{L^2(0,T;V^*)} & \leq C \| \s \|_{L^\infty(0,T;H)}^{1/2} \| \s\|_{L^2(0,T;V)}^{1/2} + C \| \s \|_{L^2(0,T;V)} + C \| \ph \|_{L^2(0,T;H)},
\end{align*}
where the constants $C$ depend on the $L^4(0,T;L^4(\Omega))$-norms of $\nabla \ph_1$ and $\nabla \s_1$.  Applying the $L^\infty(0,T;H) \cap L^2(0,T;V)$-estimate of \eqref{cts1} then finishes the proof.

For $d = 3$, we note that the term $(n_1 - n_2) \nabla \s_1 \cdot \nabla \s $ in \eqref{Cts:1},
and the term $(\lambda(\ph_1) - \lambda(\ph_2)) \nabla \ph_1 \cdot \nabla \ph$ in \eqref{Cts:2}
both vanish, since, in this case, $n$ and $\lambda$ are assumed to be constant.  Hence, it is immediate to check that we can infer an analogous differential inequality to \eqref{cts:diff:1}
(the term $\|\ph_1\|_{W}^2$ will no longer appear on the right hand side). This allows us, by means
of Gronwall's lemma, to recover the assertion \eqref{cts1}. \qed

\subsubsection{Proof of Theorem~\ref{thm:cts2}}
\last{We remind the reader that the spatial dimension is $d = 2$}. Using the notation introduced at the beginning of the
proof of Theorem~\ref{thm:cts1}, we
test the difference of \eqref{eq_second} with $\dt \s = \dt \s_1 - \dt \s_2$ to obtain
\begin{align*}
& \frac{1}{2} \frac{d}{dt} \iO n_2 |\nabla \s|^2  + \| \dt \s \|^2 \\
& \quad = \iO (n_2 - n_1) \nabla \s_1 \cdot \nabla \dt \s + \frac{1}{2}\iO n'_2 \dt \ph_2 |\nabla \s|^2 - \iO S \dt \s \\
& \quad =\frac{1}{2}
 \iO n_2' \dt \ph_2 |\nabla \s|^2 - \iO S \dt \s - \iO(n_2 - n_1) \Delta \s_1 \dt \s \\
& \qquad - \iO \big ( (n'_2 - n'_1) \nabla \ph_2 \cdot \nabla \s_1 - n'_1 \nabla \ph \cdot \nabla \s_1 \big ) \dt \s,
\end{align*}
where we have set $n^\prime_i:=n^\prime(\varphi_i)$, $i=1,2$.
In light of the regularity of the solution stated in Theorem \ref{thm:superstrong},
of the boundedness of $n_i$ for $i=1,2$, and of conditions \eqref{ass:P}, \eqref{ass:str:P} and \eqref{ass:cts:s:m:n},
 we see that the right-hand side can be bounded by
\begin{align*}
& C \| \dt \ph_2 \| \| \nabla \s \|_4^2  + C \| \ph \|^2 + C \| \s \|^2 + C \| \Delta \s_1 \|^2 \| \ph \|_{W}^2 \\
& \qquad + \| \ph \|_{6}^2 \| \nabla \ph_2 \|_6^2 \| \nabla \s_1 \|_6^2 + C\| \nabla \ph \|_4^2  \| \nabla \s_1 \|_4^2 + \tfrac{1}{2} \| \dt \s \|^2 \\
& \quad \leq \last{C \| \nabla \s \|_4^2 + C \| \s \|^2 + C_1 \| \ph \|_W^2 + \tfrac{1}{2} \| \dt \s \|^2,}
\end{align*}
\last{for some positive constant $C_1 > 0$}. Applying the Gagliardo--Nirenberg inequality \eqref{interpol_abs_2d} to the term $\| \nabla \s \|_4^2$, for some $\eps_1 > 0$ to be determined later we find that
\begin{equation}\label{Hcts:1}
\begin{aligned}
\frac 12\frac{d}{dt} \iO n_2 |\nabla \s|^2 + \frac{1}{2} \| \dt \s \|^2 & \leq C \|n_2^{1/2}\nabla \s \|^2 + C \| \s \|^2 + C_1 \| \ph \|_{W}^2 + \eps_1 \| \s\|_{W}^2.
\end{aligned}
\end{equation}
To close this estimate we now derive estimates on $\| \ph \|_{W}$ and $\| \s \|_{W}$.  By taking the scalar product of \eqref{cts:Lam} with $\nabla \ph$, on account of \eqref{ass:m:F} and of \eqref{ass:str:F:lam}, we find the estimate
\begin{align*}
A\alpha_0 \| \nabla \ph \|^2 & \leq \| \nabla \hat \Lam \| \| \nabla \ph \|
+ C \| \ph \|_4 \| \nabla \ph_1 \|_4\| \nabla \ph \| \\
& \leq \| \nabla \hat \Lam \| \| \nabla \ph \| + C\|\ph \|^{1/2}\| \nabla \ph \|^{3/2} + C \| \nabla \ph\| \| \ph \|
\end{align*}
in light of the regularity $\ph_1 \in L^\infty(0,T;W)$.  This yields
\begin{align}\label{Cts:3}
\| \nabla \ph \| \leq C\big ( \| \nabla \hat \Lam \| + \| \ph \| \big ).
\end{align}
Then, from the identities \eqref{prop_lam}, again on account of \eqref{ass:str:F:lam} we find that
\begin{equation}\label{Cts:4}
\begin{aligned}
(\dt \ph, \dt \hat \Lam) & \geq A \alpha_0 \| \dt \ph \|^2 +A ((\lambda(\ph_1) - \lambda(\ph_2)) \dt \ph_1, \dt \ph) \\
& \geq (A \alpha_0 - \delta) \| \dt \ph \|^2 - C_\delta
 \| \dt \ph_1 \|_4^2 ( \| \ph \| \| \nabla \ph \| + \| \ph \|^2 ) \\
& \geq (A \alpha_0 - \delta) \| \dt \ph \|^2 - C_\delta
 \| \dt \ph_1 \|_4^2 ( \| \ph \|^2 +  \| \nabla \hat \Lam \|^2 ).
\end{aligned}
\end{equation}
Furthermore, from \eqref{defn:S}, we see that
\begin{align*}
\dt S & = ((P'(\ph_1) - P'(\ph_2)) \dt \ph_1 + P'(\ph_2) \dt \ph ) ( \s_1 + B J\star \ph_1 ) \\
& \quad + (P(\ph_1) - P(\ph_2))(\dt \s_1+B J \star \dt \ph_1) + P'(\ph_2) \dt \ph_2 (\s + B J \star \ph) \\
& \quad + P(\ph_2)(\dt \s + B J \star \dt \ph) -A \big ( (PF')'(\ph_1) - (PF')'(\ph_2) \big ) \dt \ph_1 -A(PF')'(\ph_2) \dt \ph,
\end{align*}
and so in light of the regularity stated in Theorem \ref{thm:superstrong}, of the inequality
\begin{align}\label{cts:Lam:2}
|\hat \Lam| \leq A \lambda_\infty |\ph |,
\end{align}
and of condition \eqref{ass:cts:s:P:F},
we find that for $\delta > 0$ and $\eps_2 > 0$ to be determined later
\begin{equation}\label{Cts:5}
\begin{aligned}
|(\hat \Lam, \dt S)| & \leq C \| \dt \ph_1 \| \| \ph \|_4^2 + C \| \dt \ph \| \| \ph \| + C ( \| \dt \s_1 \| + \| \dt \ph_1 \|) \| \ph \|_4^2 \\
& \quad + C \| \dt \ph_2 \| \| \ph \|_4 ( \| \s \|_4 + \| \ph \|_4)
+ C (\| \dt \s \| + \| \dt \ph \|) \| \ph \| \\
& \quad + C \|\dt \ph_1 \| \| \ph \|_4^2 + C \| \dt \ph \| \| \ph \| \\
& \leq \delta \| \dt \ph \|^2 + \eps_2 \| \dt \s \|^2 + C_{\delta,\eps_2} \| \ph \|_V^2
+ C_\delta \| \s \|_V^2.
\end{aligned}
\end{equation}
Now, set
\begin{align*}
\widetilde M & := B((m'_1 - m'_2) \dt \ph_1 + m'_2 \dt \ph)( \nabla J \star \ph_1 )\\
& \quad + B( m_1 - m_2)(\nabla J \star \dt \ph_1) +B m'_2 \dt \ph_2
 (\nabla J \star \ph ) + B m_2 ( \nabla J \star \dt \ph),
\end{align*}
where $m^\prime_i:=m^\prime(\varphi_i)$, for $i=1,2$. Then, on account of \eqref{ass:cts:s:m:n}
and of \eqref{Cts:3} as well, we have that
\begin{equation}\label{Cts:6}
\begin{aligned}
|(\nabla \hat \Lam, \widetilde M)| & \leq C \| \nabla \hat  \Lam \| \big ( \| \dt \ph_1 \|_4 \| \ph \|_4 + \| \dt \ph_2 \|_4 \| \ph \|_4 + \| \dt \ph \| \big ) \\
& \leq \delta \| \dt \ph \|^2 +C_\delta \| \nabla \hat \Lam \|^2
+ C_\delta \big ( \| \dt \ph_1 \|_4^2
 + \| \dt \ph_2 \|_4^2 \big ) \big ( \| \ph \| \| \nabla \ph \| +  \| \ph \|^2 \big )\\
& \leq \delta \| \dt \ph \|^2 + C_\delta \| \nabla \hat \Lam \|^2
+ C_\delta \big ( 1  + \| \dt \ph_1 \|_4^4 + \| \dt \ph_2 \|_4^4 \big) \| \ph \|^2.
\end{aligned}
\end{equation}
Then, testing the difference of \eqref{eq_first} with $\dt \hat \Lam$ yields
\begin{align*}
\frac{1}{2} \frac{d}{dt} \| \nabla \hat \Lam \|^2 + ( \dt \ph, \dt \hat \Lam)
 = B( \nabla  \dt \hat \Lam, (m_1- m_2) ( \nabla J \star \ph_1) + m_2( \nabla J \star \ph) )
 +(S, \dt \hat \Lam) ,
\end{align*}
which can be written as
\begin{align}\label{Cts:7}
& \frac{d}{dt} \Psi +  (\dt \ph, \dt \hat \Lam) + (\nabla \hat \Lam, \widetilde M) + (\hat \Lam, \dt S) = 0,
\end{align}
where \last{by \eqref{cts:Lam:2}}
\begin{align*}
\Psi & := \frac{1}{2} \| \nabla \hat \Lam \|^2 - (S,\hat \Lam) - B( \nabla \hat \Lam,  (m_1 - m_2)
 ( \nabla J \star \ph_1) + m_2  (\nabla J \star \ph)) \\
& \geq \frac{1}{4} \| \nabla \hat \Lam \|^2 - C ( \| \ph \|^2 + \| \s \|^2 ).
\end{align*}
Substituting \eqref{Cts:4}-\eqref{Cts:6} into \eqref{Cts:7} yields
\begin{equation}\label{Cts:7a}
\begin{aligned}
\frac{d}{dt} \Psi + \frac{A \alpha_0}{2} \| \dt \ph \|^2 & \leq C \big ( 1+ \| \dt \ph_1 \|_4^4 + \| \dt \ph_2 \|_4^4 \big ) \big ( \| \ph \|^2 + \| \nabla \hat \Lam \|^2 \big ) \\
& \quad + C \| \s \|_V^2 + \eps_2 \| \dt \s \|^2 + C_{\eps_2} \| \ph \|_V^2,
\end{aligned}
\end{equation}
after choosing $\delta$ sufficiently small. Integrating \eqref{Cts:7a}
over $(0,t)$, for arbitrary $t\in(0,T)$, and noting
\begin{align*}
\Psi(0) & \leq C \| \hat \Lam(0) \|_V^2 + C \| \ph_{0,1} - \ph_{0,2} \|^2 + C \| \s_{0,1} - \s_{0,2} \|^2 \\
& \leq C \| \ph_{0,1} - \ph_{0,2} \|_V^2 + C\| \s_{0,1} - \s_{0,2} \|^2 =: \mathcal{Y}^2,
\end{align*}
together with \eqref{reg_superstrong} and \eqref{cts1} we find that
\begin{align*}
& \| \nabla \hat \Lam(t) \|^2 + \int_0^t \| \dt \ph \|^2 \, ds  \\
& \quad \leq  C \int_0^t   \big ( 1+ \| \dt \ph_1 \|_4^4 + \| \dt \ph_2 \|_4^4 \big ) \big ( \| \ph \|^2 + \| \nabla \hat \Lam \|^2 \big )  + \| \s \|_V^2 + \eps_2 \| \dt \s \|^2
+C_{\eps_2} \| \varphi \|_V^2 \, ds \\
& \qquad  + C ( \| \s(t) \|^2 + \| \ph(t) \|^2) + \Psi(0) \\
& \quad \leq  C \int_0^t ( 1 + \| \dt \ph_1 \|_4^4 + \| \dt \ph \|_4^4 ) \|\nabla \hat \Lam \|^2 + \eps_2 \| \dt \s \|^2 ds +\mathcal{Y}^2
\end{align*}
on account of
\begin{align*}
\| \ph \|_{L^\infty(0,T;H)}
+\|\ph \|_{L^2(0,T;V)}+\|\s \|_{L^\infty(0,T;H)} + \| \s \|_{L^2(0,T;V)} \leq \mathcal{Y},
\end{align*}
and of the fact that $\dt \ph_1, \dt \ph_2 \in L^4(0,T;L^4(\Omega))$. By Gronwall's lemma in integral form, for $t \in (0,T)$, we get
\begin{equation}\label{Cts:8}
\begin{aligned}
\| \nabla \hat \Lam(t) \|^2 + \int_0^t \| \dt \ph \|^2 \, ds & \leq \mathcal{Y}^2 + C \eps_2 \int_0^t \| \dt \s \|^2 \, ds.
\end{aligned}
\end{equation}
With this, we now derive estimates for $\hat \Lam$ in $L^2(0,T;W)$.  Observe that by elliptic regularity and \eqref{cts:Lam:2}, we have
\begin{equation}\label{LamH2}
\begin{aligned}
\| \hat \Lam \|_{W} & \leq C_2 \big ( \| \Delta \hat \Lam \| + \| \hat \Lam \|_V + \| \nabla \hat \Lam \cdot \bold{n} \|_{H^{1/2}(\Gamma)} \big ) \\
& \leq C_2 \big ( \| \Delta \hat \Lam \| + \| \nabla \hat \Lam \| + \| \ph \| + \| \nabla \hat \Lam \cdot \bold{n} \|_{H^{1/2}(\Gamma)} \big )
\end{aligned}
\end{equation}
for a constant $C_2 > 0$ depending only on $\Omega$.  Taking the difference of \eqref{eq_first},
using Lemma \ref{controlofdiv} and testing with $\Delta \hat \Lam$, we obtain
\begin{align*}
\| \Delta \hat \Lam \|^2 & = ( \dt \ph - S, \Delta \hat \Lam) +B  (m'_2 \nabla \ph_2
\cdot( \nabla J \star \ph) + m_2 \div (\nabla J \star \ph), \Delta \hat \Lam) \\
& \quad + B ( (m_1 - m_2) \div(\nabla  J \star \ph_1)+ m'_2 \nabla \ph
\cdot( \nabla J \star \ph_1) , \Delta \hat \Lam) \\
& \quad + B( (m'_1 - m'_2) \nabla \ph_1 \cdot(\nabla J \star \ph_1), \Delta \hat \Lam) \\
& \leq \tfrac{1}{2} \| \Delta \hat \Lam \|^2 +  C \big (\| \dt \ph \|^2 + \| \ph \|_V^2 + \| \s \|^2 \big ),
\end{align*}
which, owing to \eqref{ass:cts:s:m:n} implies that
\begin{align*}
\| \Delta \hat \Lam \| \leq C \big (\| \dt \ph \| + \| \ph \|_V + \| \s \|\big ).
\end{align*}
Next, we recall \eqref{bc}, so that
\begin{align*}
\nabla \hat \Lam \cdot \bold{n} = B[(m_1 - m_2) \nabla J \star \ph_1 + m_2 \nabla J \star \ph ] \cdot \bold{n},
\end{align*}
and by invoking Lemma~\ref{LEM_trace},
we can estimate the $H^{1/2}(\Gamma)$- norm of this boundary term in the same fashion as in
\cite[Proof of Lemma 4, (3.33)--(3.37)]{FGS}. More precisely, we have that
\begin{align*}
& \|\nabla \hat \Lam \cdot \bold{n} \|_{H^{1/2}(\Gamma)} \\
& \quad \leq B \| \hat m \|_{L^{\infty}(\Gamma)} \|  (\nabla J \star \ph_1) \cdot \bold{n} \|_{H^{1/2}(\Gamma)} + B \| \hat m \|_{H^{1/2}(\Gamma)} \| (\nabla J \star \ph_1) \cdot \bold{n} \|_{L^{\infty}(\Gamma)} \\
& \qquad + B \| m_2 \|_{L^{\infty}(\Gamma)} \|(\nabla J \star \ph) \cdot \bold{n} \|_{H^{1/2}(\Gamma)} + B \| m_2 \|_{H^{1/2}(\Gamma)} \| (\nabla J \star \ph) \cdot \bold{n} \|_{L^{\infty}(\Gamma)},
\end{align*}
where $\hat m := m_1 - m_2$.  Invoking Lemma~\ref{LEM_controlofdiv}, the boundedness of the normal vector $\bold{n}$ and the trace theorem, it holds that
\begin{align*}
 \| (\nabla J \star \ph_1) \cdot \bold{n} \|_{L^{\infty}(\Gamma)} &\leq \| \nabla J \star \ph_1 \|_{L^\infty(\Gamma)} \leq \| \nabla J \star \ph_1 \|_{W^{1,3}(\Omega)} \leq C \|\ph_1 \|_{L^3(\Omega)} \leq C, \\
 \| (\nabla J \star \ph) \cdot \bold{n} \|_{L^\infty(\Gamma)} &\leq C \| \ph \|_V, \\
 \| (\nabla J \star \ph_1) \cdot \bold{n} \|_{H^{1/2}(\Gamma)} &\leq C \| J \star \ph_1 \|_{W} \leq C, \\
 \| (\nabla J \star \ph) \cdot \bold{n} \|_{H^{1/2}(\Gamma)}  &\leq C \| J \star \ph \|_{W} \leq C \|\ph \|.
\end{align*}
On the other hand, thanks to Agmon's inequality \eqref{agmon}, we have that
\begin{align*}
\| \hat m \|_{H^{1/2}(\Gamma)} & \leq C \| \hat m \|_V \leq C \| \ph \|_V, \\
\| \hat m \|_{L^{\infty}(\Gamma)} &\leq C \| \ph \|_{L^{\infty}(\Gamma)} \leq C \| \ph \|_{L^{\infty}(\Omega)}
\leq C\Vert\ph\Vert^{1/2}\Vert\ph\Vert_W^{1/2}.
\end{align*}
Hence, we find that, for $\delta > 0$ to be determined later,
\begin{align*}
\| \nabla \hat \Lam \cdot \bold{n} \|_{H^{1/2}(\Gamma)} \leq \frac{\delta}{C_2} \| \ph \|_{W} + C_\delta \| \ph \|_V,
\end{align*}
and thus from the elliptic estimate \eqref{LamH2}, we have for any $\delta > 0$,
\begin{align}\label{Cts:Lam:H2}
\| \hat \Lam \|_{W} \leq C_\delta \big (\| \ph \|_V + \| \nabla \hat \Lam \| + \| \s \| + \| \dt \ph \| \big )+ \delta \| \ph \|_{W}.
\end{align}
Next, we employ the identity
$A \partial_i \ph_k = \lam^{-1}(\ph_k) \partial_i \Lam(\ph_k)$ for $k =1,2$
and $i=1,2$, to deduce that
\begin{align*}
\notag A \partial_{i j}^2 \ph& = \frac 1 {\lam(\ph_1)}\partial_{i j}^2 \hat \Lam	+  \frac {\lam(\ph_2)-\lam(\ph_1)} {\lam(\ph_1)\lam(\ph_2)} \partial_{i j}^2 \Lam(\ph_2)\\
 & \notag \quad	- \frac {\lam^2(\ph_2)-\lam^2(\ph_1)} {\lam^2(\ph_1)\lam^2(\ph_2)} \partial_{i} \lam(\ph_1)\partial_{j}\Lam(\ph_2)- \frac 1 {\lam^2(\ph_2)}(\partial_{i}\lam(\ph_1)-\partial_{i}\lam(\ph_2))\partial_{j}\Lam(\ph_1)	\\
  & \notag \quad - \frac 1 {\lam^2(\ph_2)}\partial_{i}\lam(\ph_1)\partial_{j}\hat \Lam
	\quad \text{ for } i,j=1,2.
\end{align*}
Employing Agmon's inequality \eqref{agmon}, the Gagliardo--Nirenberg inequality, as well as \eqref{reg_superstrong}, \eqref{Cts:Lam:H2}, the $L^\infty(0,T;W)$-regularity of $\Lam(\ph_1)$ and $\Lam(\ph_2)$ from \eqref{inftyH2},
 and the assumption $\lambda\in C^{1,1}([-1,1])$, we infer that
\begin{align*}
\| \ph \|_{W} & \leq C \| \hat \Lam \|_{W} + C \| \ph \|_{L^\infty(\Omega)} \| \Lam(\ph_2) \|_{W}  + C \| \ph \|_{6} \| \nabla \ph_1 \|_{6} \| \nabla \Lam(\ph_2) \|_{6} \\
& \quad + C \| \nabla \ph \|_{4} \| \nabla \Lam(\ph_2) \|_{4} +
C \|\ph \|_6 \| \nabla \ph_1 \|_6 \| \nabla \Lam(\ph_2) \|_6
+ C \| \nabla \ph_1 \|_{4} \| \nabla \hat \Lam \|_{4} \\
& \leq C \| \ph \|_V + C \| \ph \|_V^{1/2} \| \ph \|_{W}^{1/2} + C \| \hat \Lam \|_{W} \\
& \leq C_\delta \big ( \| \ph \|_V + \| \nabla \hat \Lam \| + \| \s \| + \| \dt \ph \| \big ) + C \delta  \| \ph \|_{W}.
\end{align*}
Choosing $\delta$ sufficiently small, we then obtain
\begin{align}\label{phH2}
\| \ph \|_{W} \leq C \big ( \| \ph \|_V + \| \nabla \hat \Lam \| + \| \s \| + \| \dt \ph \| \big ).
\end{align}
Moreover, taking the difference of \eqref{eq_second} and testing with $\Delta \s$ yields
\begin{align*}
(n_2\Delta \s, \Delta \s) & = ( \dt \s -n'_2 \nabla \ph_2 \cdot \nabla \s
- (n'_1 - n'_2) \nabla \ph_1\cdot \nabla \s_1, \Delta \s) \\
& \quad + ( S - n'_2 \nabla \ph \cdot \nabla \s_1 - (n_1 - n_2) \Delta \s_1 , \Delta \s)  \\
& \leq \frac{n_*}{2} \| \Delta \s \|^2 + C \big ( \| \dt \s \|^2 + \| \nabla \ph_2 \|_4^2 \| \nabla \s \|_4^2 + \| \nabla \ph_1 \|_6^2 \| \nabla \s_1 \|_6^2 \| \ph \|_6^2 \big ) \\
& \quad  + C \big (\| \nabla \s_1 \|_4^2 \| \nabla \ph \|_4^2 + \| \Delta \s_1 \|^2 \| \ph \|_{L^{\infty}(\Omega)}^2 + \| \ph \|^2 + \| \s \|^2 \big ) \\
& \leq \frac{n_*}{2} \| \Delta \s \|^2 + C_\delta \big ( \| \dt \s \|^2 + \| \s \|_V^2 + \| \ph \|_{W}^2 \big )  + \delta \| \s \|_{W}^2,
\end{align*}
so that elliptic regularity entails that
\begin{align*}
\| \s \|_{W}^2  &\leq C \big ( \| \Delta \s \|^2 + \| \s \|_V^2 \big ) \\
& \leq C_\delta  \big ( \| \dt \s \|^2 + \| \s \|_V^2 + \| \ph \|_{W}^2 \big )  +C \delta \| \s \|_{W}^2,
\end{align*}
and with $\delta$ sufficiently small we obtain
\begin{align}\label{sH2}
\| \s \|_{W}^2 \leq C \big ( \| \dt \s \|^2 + \| \s \|_V^2 + \| \ph \|_{W}^2  \big ) .
\end{align}
First, we apply Gronwall's inequality to \eqref{Hcts:1} and then substitute \eqref{sH2}, which leads to
\begin{align*}
\| \nabla \s (t) \|^2 + \int_0^t \| \dt \s \|^2 \, ds &  \leq C \| \s_{0,1} - \s_{0,2} \|_V^2 + C \int_0^t \| \ph \|_{W}^2 + \| \s \|^2 + \eps_1 \| \s \|_{W}^2 \, ds \\
& \leq C \| \s_{0,1} - \s_{0,2} \|_V^2 + C \int_0^t \| \ph \|_{W}^2 + \| \s \|_V^2 + \eps_1 \| \dt \s \|^2 \, ds .
\end{align*}
Choosing $\eps_1$ sufficiently small, and then substituting \eqref{phH2} yields
\begin{align*}
\| \nabla \s(t) \|^2  + \int_0^t \| \dt \s \|^2  \, ds & \leq C \| \s_{0,1} - \s_{0,2} \|_V^2 + \mathcal{Y}^2 + C_3 \int_0^t \| \nabla \hat \Lam \|^2 + \| \dt \ph \|^2 \, ds \\
& \leq C \| \s_{0,1} - \s_{0,2} \|_V^2 + \mathcal{Y}^2 + C \| \ph \|_{L^2(0,T;V)}^2 + C_3 \int_0^t \| \dt \ph \|^2 \, ds \\
& \leq \underbrace{C \| \s_{0,1} - \s_{0,2} \|_V^2 + \mathcal{Y}^2}_{=: \mathcal{Z}^2} + C_3 \int_0^t \| \dt \ph \|^2 \, ds,
\end{align*}
for some positive constant $C_3$, \last{on account of  \eqref{cts:Lam} and \eqref{cts1} for the integral term involving $\| \nabla \hat \Lam \|^2$}.  To the above, we add the inequality \eqref{Cts:8} multiplied by $(C_3 + 1)$, leading to
\begin{align*}
\| \nabla \s(t) \|^2 + \| \nabla \hat \Lam(t) \|^2 + \int_0^t \| \dt \s \|^2 + \| \dt \ph \|^2  \, ds \leq \mathcal{Z}^2 + C(C_3 +1) \eps_2 \int_0^t \| \dt \s \|^2 \, ds.
\end{align*}
Choosing $\eps_2$ sufficiently small, we then obtain
\begin{align*}
\| \nabla \s \|_{L^\infty(0,T;H)}^2 + \| \nabla \hat \Lam \|_{L^\infty(0,T;H)}^2 + \| \dt \s \|_{L^2(0,T;H)}^2 + \| \dt \ph \|_{L^2(0,T;H)}^2 \leq \mathcal{Z}^2.
\end{align*}
Returning to \eqref{phH2} and \eqref{sH2} this implies
\begin{align*}
\| \ph \|_{L^2(0,T;W)}^2 + \| \s \|_{L^2(0,T;W)}^2 \leq C \big ( \| \ph_{0,1} - \ph_{0,2} \|_{V}^2 + \| \s_{0,1} - \s_{0,2} \|_{V}^2 \big ),
\end{align*}
which completes the proof. \qed

\section{Application to the inverse identification problem}\label{sec:appl}
In this section we study the constrained minimisation problem \eqref{opt}.  The standard procedure to deriving first-order optimality conditions is to first establish the differentiability of the solution operator $S: \ph_0 \mapsto \ph$, \last{show} well-posedness results for the corresponding linearised system and adjoint system, and then use them to derive the optimality condition.

We restrict our analysis to the two dimensional case.  In preliminary calculations not shown here, it appears that the Fr\'echet differentiability of the solution operator would require a continuous dependence estimate for $\ph$ and $\s$ in $L^4(0,T;V) \cap L^2(0,T;W)$.  While this is guaranteed for strong solutions given by Theorem \ref{thm:superstrong}, there is a requirement on the initial condition $\ph_0 \in W$ to fulfil the compatibility condition (c.f.~Theorem \ref{thm:superstrong})
\begin{align}\label{ph0:com}
[\nabla \Lam(\ph_0) - B m(\ph_0) (\nabla J \star \ph_0)] \cdot \bold{n} = 0 \text{ on } \Gamma.
\end{align}
Hence, as a first attempt we can take the space of admissible controls as
\begin{align}
&U = \Big \{ u  \in W\, : \, |u| \leq 1 \text{ a.e.~in } \Omega \text{ and } \eqref{ph0:com} \mbox{ holds }\Big \}.
\label{defU}
\end{align}
However, this set is not convex due to the nonlinear constraint \eqref{ph0:com}, and the resulting optimality condition would involve Lagrange multipliers, which further complicates the numerical implementation.  Moreover, instead of using the norm $\|\cdot \|_{H^1(\Omega)}$ in \eqref{opt},
one would employ a norm $\|\cdot \|_U$ which need to match with the expected regularity of the control.  In this case we can choose for example
\begin{align*}
\| u \|_U := \big (\| u \|^2 + \| \nabla u \|^2 +  \| \Delta u \|^2 \big )^{1/2},
\end{align*}
but we can expect \last{that} the strong form of the optimality condition involves a fourth order differential operator.  Hence, in light of both analytical and numerical complications arising from working with solutions of the highest level of regularity, we consider regularities obtained from Theorem~\ref{thm:strong}.  It turns out that they are enough to establish G\^ateaux differentiability of the solution operator, and we can consider, for $\kappa>0$ fixed, the convex set
\begin{align}\label{consp}
U := \{ u \in V \, : \, |u| \leq 1\,,\,\,\, M(u)\leq \kappa \,\,\, \text{ a.e.~in } \Omega \},
\end{align}
with the entropy function $M$ defined as in \eqref{ass:ini},
as the set of admissible controls. It is immediate to check that $U$ is closed in $H$. Notice also that,
if $M$ is bounded in $[-1,1]$ (this occurs, in particular, if $m$ is weakly degenerate, i.e.,
if $m(\pm 1)=0$ with order strictly less than $2$, see \cite[Remark 8]{FGR2}), the condition
$M(u)\leq\kappa$ in the definition of the set $U$ can be removed. From now onward, we will tacitly assume $U$ to be defined by \eqref{consp}. Moreover, assuming that \eqref{ass:m:F}--\eqref{ass:P} and \eqref{ass:str:m:n}--\eqref{ass:str:P} hold, and that $T>0$ and $\s_0\in V$ are fixed,
the control-to-state operator $S:U\subset V\to V$ is well defined by $S(\varphi_0)=\varphi(T)$,
for all $\varphi_0\in U$, where $[\varphi,\sigma]$ is the unique solution to (P) corresponding to $[\varphi_0,\s_0]$
and given by Theorem \ref{thm:strong} (uniqueness is ensured by Corollary \ref{cor:uniq}).

\subsection{Analysis of the linearised system and adjoint system}
Fix $\bar \ph_0 \in U$ (fix also $\s_0\in V$) and let $[\bar \ph, \bar \s]$ denote the unique solution to $(\mathrm{P})$ obtained from Theorem \ref{thm:strong} corresponding to initial conditions
$[\bar \ph_0, \s_0]$.  For $h \in L^\infty(\Omega) \cap (U - U)$, i.e., $h =v - \bar \ph_0$ for some $v \in U$, we consider the linearised system written in strong form
\begin{subequations}\label{linearised}
\begin{alignat}{3}
& \notag \dt \xi - \div (A \lam(\bar \ph) \nabla \xi + A \lam'(\bar \ph) \xi \nabla \bar \ph)&& \\
&\notag \quad \qquad + B \, \div (m(\bar \ph) (\nabla J \star \xi) + m'(\bar \ph)\xi (\nabla J \star \bar \ph)) && \\
& \qquad = - A(PF')'(\bar \ph) \xi + P'(\bar \ph)\xi (\bar \s + B J \star \bar \ph) + P(\bar \ph)(\eta + B J \star \xi) && \text{ in } Q, \\
& \notag \dt \eta - \div (n(\bar \ph) \nabla \eta + n'(\bar \ph) \xi \nabla \bar \s) && \\
& \qquad =  A(PF')'(\bar \ph) \xi - P'(\bar \ph)\xi (\bar \s + B J \star \bar \ph) - P(\bar \ph)(\eta + B J \star \xi) && \text{ in } Q, \\
& [A \lam (\bar \ph) \nabla \xi + A \lam'(\bar \ph) \xi \nabla \bar \ph - B m(\bar \ph) (\nabla J \star \xi) - Bm'(\bar \ph) \xi (\nabla J \star \bar \ph)] \cdot \bold{n} = 0 && \text{ on } \Sigma, \\
& [n(\bar \ph) \nabla \eta + n'(\bar \ph) \xi \nabla \bar \s] \cdot \bold{n} = 0 && \text{ on } \Sigma, \\
& \xi(0) = h, \quad \eta(0) = 0 && \text{ in } \Omega,
\end{alignat}
\end{subequations}
as well as the adjoint system written in strong form
\begin{subequations}\label{adjoint}
\begin{alignat}{3}
\notag & - \dt p - \div (A \lam(\bar \ph) \nabla p) + A \lam'(\bar \ph) \nabla \bar \ph \cdot \nabla p - B m'(\bar \ph) (\nabla J \star \bar \ph) \cdot \nabla p && \\
\notag & \quad \qquad - B \nabla J \dot{\star} (m(\bar \ph) \nabla p) + P'(\bar \ph)(\bar \s - A F'(\bar \ph) +  B J \star \bar \ph)(q-p) && \\
\notag & \quad \qquad - A P(\bar \ph) F''(\bar \ph) (q-p) + B J \star (P (\bar \ph) (q-p)) + n'(\bar \ph) \nabla \bar \s \cdot \nabla q && \\
&\quad  \quad = 0 && \text{ in } Q, \\
& - \dt q - \div (n(\bar \ph) \nabla q) + P(\bar \ph)(q-p)  = 0 && \text{ in } Q, \\
& A \lam(\bar \ph)\dn p = n(\bar \ph) \dn q = 0 && \text{ on } \Sigma, \\
& p(T) = \bar \ph(T) - \ph_\Omega, \quad q(T) = 0 && \text{ in } \Omega,
\end{alignat}
\end{subequations}
where $\nabla J \dot{\star} \nabla p$ is defined as
\begin{align*}
	(\nabla J \dot{\star} \nabla p)(x) := \iO \nabla J(x-y )\cdot \nabla p(y) dy
	\quad \hbox{for $a.e.$ $x \in \Omega.$}
\end{align*}
We note that both the linearised system \eqref{linearised} and the adjoint system \eqref{adjoint} are linear in their respective variables $[\xi, \eta]$ and $[p, q]$.  The weak well-posedness of both systems are formulated as follows.

\begin{thm}\label{thm:lin:adj}
Assume \eqref{ass:m:F}-\eqref{ass:ini}, \eqref{ass:str:m:n}-\eqref{ass:str:P} are satisfied with $\bar \ph_0 \in V$, and with $\s_0\in V$ fixed. Denote by $[\bar \ph, \bar \s]$ the unique strong solution to $(\mathrm{P})$ corresponding to $[\bar \ph_0,\s_0]$ and obtained from Theorem \ref{thm:strong}.  For any $v \in U $, setting $h = v - \bar \ph_0$, then there exists a unique solution $[\xi, \eta]$ to the linearised system \eqref{linearised} corresponding to $(h, \bar \ph, \bar \s)$, and for any $\ph_\Omega \in H$ a unique solution $[p, q]$ to the adjoint system \eqref{adjoint} corresponding to $(\ph_\Omega, \bar \ph, \bar \s)$ in the following sense:
\begin{itemize}
\item they possess the following regularities
\begin{align}\label{reg:lin:adj}
\xi, \eta, q & \in H^1(0,T;V^*) \cap L^2(0,T;V)  \cap C^0([0,T];H), \\
p & \in W^{1,\frac{4}{3}}(0,T;V^*) \cap L^2(0,T;V) \cap L^\infty(0,T;H) \cap C^0([0,T];V^*); \label{reg:p}
\end{align}
\item for every $w, z \in V$ and almost every $t \in (0,T)$ we have
\begin{subequations}\label{lin:weak}
\begin{alignat}{2}
\notag & \< \dt \xi, w>_V + \iO \big ( A\lam(\bar \ph) \nabla \xi + A \lam'(\bar \ph) \xi \nabla \bar \ph \big ) \cdot \nabla w \\
\notag & \quad - \iO  B\big ( m(\bar \ph) (\nabla J \star \xi) + m'(\bar \ph) \xi (\nabla J \star \bar \ph) \big ) \cdot \nabla w
+ \iO A(PF')'(\bar \ph) \xi w \\
& \quad - \iO (P'(\bar \ph)(\bar \s + B J \star \bar \ph) \xi + P(\bar \ph)(\eta + B J \star \xi)) w = 0, \label{lin:weak:1} \\
\notag & \< \dt \eta, z>_V + \iO \big (n(\bar \ph) \nabla \eta + n'(\bar \ph) \xi \nabla \bar \s \big ) \cdot \nabla z \\
& \quad + \iO \big (P'(\bar \ph)(\bar \s + B J \star \bar \ph) \xi + P(\bar \ph)(\eta + B J \star \xi) - A (PF')'(\bar \ph) \xi \big ) z = 0, \label{lin:weak:2}
\end{alignat}
\end{subequations}
and
\begin{subequations}\label{adj:weak}
\begin{alignat}{2}
\notag & - \< \dt p, w>_V + \iO A \lam(\bar \ph) \nabla p \cdot \nabla w
 +\iO \big ( A \lambda'(\bar \ph) \nabla \bar \ph \cdot \nabla p - Bm'(\bar \ph)( \nabla J \star \bar \ph) \cdot \nabla p \big ) w \\
\notag & \quad - \iO \big ( B \nabla J \dot{\star} (m(\bar \ph) \nabla p) + A (PF'')(\bar \ph)(q-p)  - n'(\bar \ph)(\nabla \bar \s \cdot \nabla q) \big ) w \\
& \quad + \iO \big (P'(\bar \ph)(\bar \s - A F'(\bar \ph) + B J \star \bar \ph)(q-p)  + B J \star (P(\bar \ph)(q-p)) \big )w = 0, \label{adj:weak:1} \\
& -\< \dt q, z>_V + \iO n(\bar \ph) \nabla q \cdot \nabla z +\iO P(\bar \ph)(q-p) z = 0, \label{adj:weak:2}
\end{alignat}
\end{subequations}
along with the initial/terminal conditions
\begin{align*}
\xi(0) = h, \quad \eta(0) = 0, \quad q(T) = 0 \text{ in } H, \\
\<p(T), w >_V = \< \bar \ph(T) - \ph_\Omega, w >_V \quad\last{\forall w \in V}.
\end{align*}
\end{itemize}
\end{thm}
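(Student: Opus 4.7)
Both systems are linear in their unknowns with coefficients determined by the fixed strong solution $[\bar\ph,\bar\s]$, so my plan is to construct solutions via a Faedo--Galerkin scheme using the eigenfunctions of the Neumann Laplacian on $\Omega$; for the adjoint system I would first perform the time reversal $t\mapsto T-t$ to convert \eqref{adjoint} into a forward Cauchy problem with initial data $\bar\ph(T)-\ph_\Omega\in H$ and $0\in H$. At each finite-dimensional level the problem reduces to a linear ODE system with integrable coefficients inherited from Theorem~\ref{thm:strong}, so global existence on $[0,T]$ is automatic, and the real work is to produce a priori bounds uniform in the discretisation parameter, pass to the limit, and use linearity for uniqueness.

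For the linearised system, I would test \eqref{lin:weak:1} with $\xi$ and \eqref{lin:weak:2} with $\eta$ and sum: the coercive contributions $A\alpha_0\|\nabla\xi\|^2$ and $n_*\|\nabla\eta\|^2$ come from \eqref{ass:m:F} and \eqref{ass:n}. The delicate terms are the gradient--gradient couplings $\iO \lam'(\bar\ph)\xi\nabla\bar\ph\cdot\nabla\xi$ and $\iO n'(\bar\ph)\xi\nabla\bar\s\cdot\nabla\eta$. Since $d=2$, applying \eqref{interpol_abs_2d} to $\nabla\bar\ph$ together with $\bar\ph\in L^\infty(0,T;V)\cap L^2(0,T;W)$ yields $\|\nabla\bar\ph\|_4^2\le C\|\nabla\bar\ph\|\,\|\bar\ph\|_W$, hence $\|\nabla\bar\ph\|_4^4\in L^1(0,T)$; an analogous statement holds for $\bar\s$ by \eqref{imp_reg2}. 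H\"older's inequality then gives
\[
\left|\iO \lam'(\bar\ph)\xi\nabla\bar\ph\cdot\nabla\xi\right|\le \delta\|\nabla\xi\|^2+C_\delta\|\nabla\bar\ph\|_4^4\|\xi\|^2+C\|\xi\|^2,
\]
and similarly for the mixed $\xi$--$\eta$ couplings and the zeroth-order contributions involving $(PF')'$, $P'$, etc., all of which are bounded by \eqref{ass:str:P}. Absorbing gradient terms into the coercive part and invoking Gronwall's lemma (with an $L^1(0,T)$ weight) delivers $[\xi,\eta]\in L^\infty(0,T;H)\cap L^2(0,T;V)$; comparison in each equation then yields $\dt\xi,\dt\eta\in L^2(0,T;V^*)$ and the continuity into $H$.

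The adjoint system is treated along the same lines after time reversal; the new features are the \emph{non-divergence} first-order terms $A\lam'(\bar\ph)\nabla\bar\ph\cdot\nabla p$ and $n'(\bar\ph)\nabla\bar\s\cdot\nabla q$ and the convolution contribution $B\nabla J\dot{\star}(m(\bar\ph)\nabla p)$. Testing with $p$ and $q$, the convolution is controlled via $\|\nabla J\dot{\star} f\|\le \|\nabla J\|_{L^1}\|f\|$, while the critical first-order term satisfies
\[
\left|\iO \lam'(\bar\ph)\nabla\bar\ph\cdot\nabla p\,p\right|\le C\|\nabla\bar\ph\|_4\|\nabla p\|\|p\|_4\le \delta\|\nabla p\|^2+C_\delta\|\nabla\bar\ph\|_4^4\|p\|^2,
\]
again an $L^1(0,T)$-in-time weight, so Gronwall gives $[p,q]\in L^\infty(0,T;H)\cap L^2(0,T;V)$, and comparison in \eqref{adj:weak:2} produces $\dt q\in L^2(0,T;V^*)$. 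The main obstacle, and the reason for the lower time integrability of $\dt p$ in \eqref{reg:p}, is that pairing \eqref{adj:weak:1} against $w\in V$ the non-divergence term only admits the estimate $\|\dt p\|_{V^*}\lesssim \|\nabla\bar\ph\|_4\|p\|_V+\text{(lower-order terms)}$; since $\|\nabla\bar\ph\|_4\in L^4(0,T)$ while $\|p\|_V\in L^2(0,T)$, H\"older forces precisely $\dt p\in L^{4/3}(0,T;V^*)$, so that $p\in W^{1,4/3}(0,T;V^*)\hookrightarrow C^0([0,T];V^*)$. Uniqueness in both systems is automatic from linearity: the same energy estimates applied to the difference of two solutions with vanishing data yield the trivial solution.
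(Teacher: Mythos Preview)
Your proposal is correct and follows essentially the same approach as the paper: Faedo--Galerkin approximation, testing with $(\xi,\eta)$ and $(p,q)$ respectively, exploiting the two-dimensional Gagliardo--Nirenberg inequality together with $\bar\ph,\bar\s\in L^\infty(0,T;V)\cap L^2(0,T;W)$ to render the gradient weights $L^1(0,T)$, Gronwall, and finally comparison for the time derivatives. One small remark: in your $\partial_t p$ estimate, the term $n'(\bar\ph)\nabla\bar\s\cdot\nabla q$ is not genuinely lower-order but contributes a second critical piece $\|\nabla\bar\s\|_4\|\nabla q\|$, which is likewise only $L^{4/3}(0,T)$ for the same reason; the paper records both contributions explicitly, but your overall conclusion is unchanged.
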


\begin{proof}
We proceed formally by deriving sufficient estimates for a Faedo--Galerkin approximation.  Focusing first on the linearised system, testing \eqref{lin:weak:1} with $\xi$ and \eqref{lin:weak:2} with $\eta$, integrating over $\Omega$ and summing the equations leads to
\begin{align}\label{lin:est}
\notag & \frac{1}{2} \frac{d}{dt}  \big ( \| \xi \|^2 + \| \eta \|^2 \big ) + \iO A \lam(\bar \ph) |\nabla \xi|^2
+ \iO n( \bar \ph) |\nabla \eta|^2 \\
& \quad \leq C
\iO \big(|\xi| |\nabla \bar \ph||\nabla \xi | + |\nabla J \star \xi| |\nabla \xi| +  |\xi||\nabla J \star \bar \ph| |\nabla \xi | + |\xi | |\nabla \bar \s | | \nabla \eta |\big) \\
\notag & \qquad  + C \iO (| \bar \s| + |J \star \bar \ph|)(|\xi|^2 + |\xi ||\eta|) + C \iO \big(|\eta | |\xi | + |J \star \xi | (|\xi|+ |\eta|) + |\xi|^2 + |\eta|^2\big),
\end{align}
where we have used the boundedness of $\lam', n', m, m', P, P'$ and $(PF')'$.
Employing the Gagliardo--Nirenberg inequality and the regularities $\bar \ph, \bar \s \in L^\infty(0,T;V) \cap L^2(0,T;W)$ from Theorem \ref{thm:strong}, we see that
\begin{align*}
& \frac{1}{2} \frac{d}{dt}  \big ( \| \xi \|^2 + \| \eta \|^2 \big ) + A \alpha_0 \|\nabla \xi\|^2 + n_* \|\nabla \eta\|^2 \\
& \quad \leq  \frac{A \alpha_0}{2} \| \nabla \xi\|^2 + \frac{n_*}{2} \| \nabla \eta \|^2 + C \big ( 1 + \| \nabla \bar \ph \|_{4}^4 + \| \nabla \bar \s \|_4^4 + \| \bar \s \|_{\infty}^2 \big )\big ( \| \xi \|^2 + \| \eta \|^2 \big ).
\end{align*}
Invoking Gronwall's inequality yields the $L^\infty(0,T;H) \cap L^2(0,T;V)$-estimate for $[\xi, \eta]$.  Moreover, from \eqref{lin:weak:1} and \eqref{lin:weak:2} it is easy to see that
\begin{align*}
\| \dt \xi \|_{V^*} & \leq C \big (\| \xi \|_V +  \| \nabla \bar \ph \|_4 \| \xi \|_4 + \| \eta \| \big ), \\
\|\dt \eta \|_{V^*} & \leq C \big (  \| \eta \|_V +  \| \nabla \bar \s \|_4  \| \xi \|_4 + \| \xi \| \big ),
\end{align*}
which leads to the $H^1(0,T;V^*)$-estimate for $[\xi, \eta]$.  For uniqueness, we just mention that the testing procedure leading to \eqref{lin:est} is valid with the regularity stated in \eqref{reg:lin:adj}, and thanks to the linearity of the linearised system \eqref{linearised}, the difference of two solutions satisfy \eqref{lin:weak} with zero initial conditions.  Then, by a similar argument with Gronwall's inequality we can infer the uniqueness of weak solutions.

For the adjoint system, again we derive formal estimates and mention that the argument can be made rigorous with a Faedo--Galerkin approximation.  Testing \eqref{adj:weak:1} with $p$ and \eqref{adj:weak:2} with $q$, integrating over $(t, T)$ for arbitrary $t\in (0,T)$,
using the the Gagliardo--Nirenberg inequality and the terminal conditions and after summing the equations we arrive at
\begin{align*}
& \frac{1}{2} \big ( \| p(t) \|^2 + \| q(t) \|^2 \big ) + \int_{t}^T A \alpha_0 \| \nabla p \|^2 + n_* \| \nabla q \|^2 + \| P^{1/2}(\bar \ph) q \|^2 \, ds\\
& \quad \leq  \frac{1}{2} \|  \bar \ph(T) - \ph_\Omega \|^2  + C \int_t^T \iO \big (|\nabla \bar \ph | + |\nabla J \star \bar \ph| \big ) |\nabla p | |p|  + |\nabla J \dot{\star} (m(\bar \ph) \nabla p)| |p|\, ds \\
& \qquad  + C \int_t^T \iO  |\nabla \bar \s | |\nabla q | |p| + \big ( 1 + |\bar \s |) ( |q| + |p|) |p|  \, ds\\
& \quad \leq \int_t^T  \frac{A \alpha_0}{2} \| \nabla p\|^2 + \frac{n_*}{2} \| \nabla q \|^2
+ C \big ( 1+ \| \nabla \bar \ph \|_4^4  + \| \nabla \bar \s \|_4^4 + \| \bar \s \|_\infty^2 \big ) \big ( \|p \|^2 + \| q \|^2 \big ) \, ds \\
& \qquad + \frac{1}{2} \| \bar \ph(T) - \ph_\Omega \|^2.
\end{align*}
Applying the integral form of Gronwall's inequality and the regularities of $[\bar \ph, \bar \s]$ from Theorem \ref{thm:strong} yields the existence of a constant $C$ independent of $p$ and $q$ such that
\begin{align}\label{p:q:est}
\sup_{t \in (0,T)} \big ( \| p(t) \|^2 + \| q(t) \|^2 \big ) + \int_0^T \| \nabla p \|^2 + \| \nabla q \|^2 ds \leq C\| \bar \ph(T) - \ph_\Omega \|^2,
\end{align}
leading to the $L^\infty(0,T;H) \cap L^2(0,T;V)$-estimate for $[p, q]$.  In a similar fashion, we deduce from \eqref{adj:weak:1} and \eqref{adj:weak:2} that
\begin{align*}
\| \dt p \|_{V^*} &\leq C \big (1 + \| \bar \ph \|_W^{1/2} \big ) \| \nabla p \| + C \| \bar \s \|_W^{1/2} \| \nabla q \| + C \big (\| q \|_V + \| p \|_V \big ), \\
\| \dt q \|_{V^*} & \leq C \big ( \| q \|_V + \|p \| \big )
\end{align*}
leading to the $H^1(0,T;V^*)$-estimate for $q$ and the $W^{1,\frac{4}{3}}(0,T;V^*)$-estimate for $p$, \last{the latter is due to the fact that $\bar \ph \in L^2(0,T;W)$ from the statement of Theorem \ref{thm:strong}}.  For uniqueness of solutions we use the fact that any weak solution to the adjoint system satisfies the inequality \eqref{p:q:est} due to weak lower semicontinuity of the Bochner norms.  If $\hat p = p_1 - p_2$ and $\hat q = q_1 - q_2$ are the difference of two weak solutions corresponding to the same terminal data $\bar \ph(T) - \ph_\Omega$, thanks to the linearity of \eqref{adjoint}, we see that $[\hat p, \hat q]$ satisfies \eqref{adj:weak:1}-\eqref{adj:weak:2} with $\hat p(T) = \hat q(T) = 0$.  Then, the analogous inequality \eqref{p:q:est} for $[\hat p, \hat q]$ yields that $\hat p = \hat q = 0$ for a.e.~$(x,t) \in Q$.
\end{proof}

An immediate consequence of Theorem \ref{thm:lin:adj} is the following.

\begin{cor}
Assume that \eqref{ass:m:F}--\eqref{ass:ini}, \eqref{ass:str:m:n}--\eqref{ass:str:P} are satisfied. Let $\ph_\Omega \in H$, and $[\bar \ph, \bar \s]$ be the unique strong solution to $(\mathrm{P})$ corresponding to initial data $[\bar \ph_0, \s_0] \in U \times V$ obtained from Theorem \ref{thm:strong}.  For any $v \in U$ with $h = v - \bar \ph_0$, let $[\xi, \eta]$ and $[p, q]$ denote the unique weak solutions to the linearised system \eqref{linearised} and adjoint system \eqref{adjoint}, respectively.  Then, it holds that
\begin{align}\label{cor:opt}
 \iO (\bar \ph(T) - \ph_\Omega) \xi(T)  = \iO p(0) h.
\end{align}
\end{cor}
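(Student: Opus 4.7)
The identity \eqref{cor:opt} is the classical duality relation between the linearised system \eqref{linearised} and its adjoint \eqref{adjoint}, and the plan is a standard adjoint-testing argument. Specifically, I would test the weak linearised system \eqref{lin:weak:1}--\eqref{lin:weak:2} with the adjoint pair $(p,q)$, test the weak adjoint system \eqref{adj:weak:1}--\eqref{adj:weak:2} with the linearised pair $(\xi,\eta)$, integrate over $(0,T)$, and subtract. All spatial bilinear contributions cancel by the very definition of the adjoint, leaving only the time-derivative terms which recombine via integration by parts in time to produce the boundary contributions at $t=0$ and $t=T$. The initial/terminal conditions $\xi(0)=h$, $\eta(0)=0$, $q(T)=0$, $p(T)=\bar\ph(T)-\ph_\Omega$ will then immediately collapse the result to \eqref{cor:opt}.

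The bulk of the work lies in verifying the spatial cancellation term by term. The contributions $A\lam(\bar\ph)\nabla\xi\cdot\nabla p$, $n(\bar\ph)\nabla\eta\cdot\nabla q$, and the pointwise zero-order multiplicative terms are manifestly symmetric in $(\xi,\eta)\leftrightarrow(p,q)$. The transport-type couplings $A\lam'(\bar\ph)\xi\,\nabla\bar\ph\cdot\nabla p$ and $n'(\bar\ph)\xi\,\nabla\bar\s\cdot\nabla q$ appear in divergence form in the linearised equations and as multipliers against the adjoint test function in the adjoint equations; testing the adjoint with $w=\xi$ (resp.~$\eta$) makes them match exactly. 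The delicate pieces are the convolution terms. Using $J(-z)=J(z)$ (so $\nabla J(-z)=-\nabla J(z)$) together with Fubini one obtains, for suitable scalar $f,g$ and vector $\bold v$,
\begin{align*}
\iO f\,(J\star g) = \iO g\,(J\star f), \qquad \iO f\,(\nabla J\star g)\cdot \bold v = -\iO g\,[\nabla J\,\dot\star\,(f\bold v)],
\end{align*}
which is precisely the structural reason for the presence of the term $B\nabla J\,\dot\star\,(m(\bar\ph)\nabla p)$ in \eqref{adj:weak:1} and for the sign of the $BJ\star(P(\bar\ph)(q-p))$ contribution. Careful bookkeeping then shows that every convolution term produced by testing \eqref{lin:weak:1}--\eqref{lin:weak:2} with $(p,q)$ is exactly the one produced by testing \eqref{adj:weak:1}--\eqref{adj:weak:2} with $(\xi,\eta)$.

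Once the spatial forms are eliminated, what remains is
\begin{align*}
\ioT \<\dt\xi,p>_V + \ioT \<\dt p,\xi>_V + \ioT \<\dt\eta,q>_V + \ioT \<\dt q,\eta>_V = 0,
\end{align*}
and it suffices to justify the generalised integration by parts
\begin{align*}
\ioT \<\dt u,v>_V + \ioT \<\dt v,u>_V = (u(T),v(T))_H - (u(0),v(0))_H
\end{align*}
for $(u,v)=(\eta,q)$ and $(u,v)=(\xi,p)$. For $(\eta,q)$, both factors lie in $H^1(0,T;V^*)\cap L^2(0,T;V)\subset C^0([0,T];H)$ and the formula is a textbook result. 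For $(\xi,p)$, the weaker integrability $\dt p\in L^{4/3}(0,T;V^*)$ makes the direct pairing slightly delicate, but the identity can be obtained at the Faedo--Galerkin level -- which is how existence is established in Theorem~\ref{thm:lin:adj} -- and passed to the limit using the convergences already available there, exploiting the additional regularity $\xi\in C^0([0,T];H)$ and $p\in L^\infty(0,T;H)$ at the endpoints. Inserting the boundary values $\xi(0)=h$, $\eta(0)=0$, $q(T)=0$ and $p(T)=\bar\ph(T)-\ph_\Omega$ then yields exactly \eqref{cor:opt}. The main obstacle is this last justification of the time integration by parts under the reduced $W^{1,4/3}$ time regularity of $p$; the spatial cancellation, while lengthy, is routine once the convolution symmetries above are in hand.
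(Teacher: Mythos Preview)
Your approach is essentially the same as the paper's: test the linearised system with $(p,q)$, test the adjoint with $(\xi,\eta)$, integrate in time, and exploit the convolution symmetries (the paper explicitly records $\int_\Omega (J\star\rho)\omega=\int_\Omega (J\star\omega)\rho$, and your second identity is precisely what underlies the $\nabla J\,\dot\star\,(m(\bar\ph)\nabla p)$ term) so that all spatial contributions match. The only organisational difference is that the paper first integrates the adjoint equations by parts in time so that both sides carry $\int_0^T\langle\partial_t\xi,p\rangle_V$ and $\int_0^T\langle\partial_t\eta,q\rangle_V$, and then compares the two sums directly, whereas you subtract and then integrate by parts; these are equivalent.

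You are in fact more scrupulous than the paper on one point: the paper simply writes ``integrating the resulting identities by parts on $(0,T)$'' without commenting on the pairing $\langle\partial_t p,\xi\rangle_V$ under the weaker regularity $\partial_t p\in L^{4/3}(0,T;V^*)$, $\xi\in L^2(0,T;V)$. Your suggestion to justify this at the Faedo--Galerkin level and pass to the limit is a sound way to close that gap.
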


\begin{proof}
Testing \eqref{lin:weak:1} with $w = p$ and \eqref{lin:weak:2} with $z = q$ and integrating over time
leads to
\begin{subequations}
\begin{alignat}{2}
\notag &\int_0^T \< \dt \xi, p>_V + \int_Q \big ( A \lam(\bar \ph) \nabla \xi + A \lam'(\bar \ph) \xi \nabla \bar \ph \big ) \cdot \nabla p \\
\notag & \quad - \int_Q B \big ( m(\bar \ph)(\nabla J \star \xi) + m'(\bar \ph) \xi (\nabla J \star \bar \ph) \big ) \cdot \nabla p
+\int_Q  A(PF')'(\bar \ph) \xi p\\
\label{adj:lin:1} & \quad - \int_Q \big ( P'(\bar \ph)(\bar \s + B J \star \bar \ph) \xi + P(\bar \ph) (\eta + B J \star \xi) \big ) p = 0, \\
\notag &\int_0^T \< \dt \eta, q >_V + \int_Q  \big ( n(\bar \ph) \nabla \eta + n'(\bar \ph) \xi \nabla \bar \s \big ) \cdot \nabla q \\
\label{adj:lin:2} & \quad + \int_Q \big ( P'(\bar \ph)(\bar \s + B J \star \bar \ph) \xi + P(\bar \ph) (\eta + B J \star \xi) - A(PF')'(\bar \ph) \xi \big ) q = 0.
\end{alignat}
\end{subequations}
On the other hand, taking $w=\xi$ in \eqref{adj:weak:1}, $z=\eta$ in \eqref{adj:weak:2}, and integrating the resulting identities by parts
on $(0,T)$, we get
\begin{subequations}
\begin{alignat}{2}
\notag & \int_0^T \< \dt \xi, p>_V + \int_Q \big (A \lam(\bar \ph) \nabla \xi  + A \lam'(\bar \ph) \xi \nabla \bar \ph \big )\cdot \nabla p \\
\notag & \quad - \int_Q B \big (m(\bar \ph) (\nabla J \star \xi)  + m'(\bar \ph)\xi (\nabla J \star \bar \ph) \big ) \cdot \nabla p
+ \int_Q A(PF')'(\bar \ph) \xi (p  - q) \\
\notag & \quad - \int_Q  P'(\bar \ph)(\bar \s + B J \star \bar \ph)(p-q) \xi
-\int_Q P(\bar \ph) (BJ \star \xi)(p-q) \\
\label{adj:lin:3} & \quad + \int_Q n'(\bar \ph)\xi (\nabla \bar \s \cdot \nabla q )= \< \bar \ph(T) - \ph_\Omega, \xi(T)>_V - \iO p(0) \xi(0), \\
\label{adj:lin:4} & \int_0^T \< \dt \eta, q>_V + \int_Q n(\bar \ph) \nabla q \cdot \nabla \eta
+ \int_Q P(\bar \ph)(q-p) \eta = 0,
\end{alignat}
\end{subequations}
where we have used $\eta(0) = 0$ and $q(T) = 0$.
Moreover, the following identity has been employed
\begin{align*}
\int_\Omega (J \star \rho)\omega=\int_\Omega (J\star\omega)\rho \,,\qquad\forall \omega,\rho\in H\,.
\end{align*}
Then, \eqref{cor:opt} is a consequence of comparing the sum of \eqref{adj:lin:1}-\eqref{adj:lin:2} and the sum of \eqref{adj:lin:3}-\eqref{adj:lin:4}.
\end{proof}

\subsection{G\^ateaux differentiability of the solution mapping}
The main result of this section is the following G\^ateaux differentiability of the solution mapping.

\begin{thm}\label{thm:Gat}
Assume that \eqref{ass:m:F}--\eqref{ass:ini}, \eqref{ass:str:m:n}--\eqref{ass:str:P} are satisfied. Fix $\bar \ph_0, v \in U$, $\s_0 \in V$, and set $[\bar \ph, \bar \s] = \mathcal{S}(\bar \ph_0, \s_0)$ as the unique strong solution to $(\mathrm{P})$ corresponding to initial data $[\bar \ph_0, \s_0]$ obtained from Theorem \ref{thm:strong}.  Denote by $h = v - \bar \ph_0 \in L^\infty(\Omega) \cap (U - U)$, and $[\xi, \eta]$ as the unique weak solution to the linearised system \eqref{linearised} corresponding to $(h, \bar \ph, \bar \s)$.  Then, the solution mapping $\mathcal{S}$ is weakly directionally differentiable at $\bar \ph_0$ in the direction $h = v - \bar \ph_0$, and as $\tau \downarrow 0$,
\begin{align*}
\frac{\mathcal{S}(\bar \ph_0 + \tau h, \s_0) - \mathcal{S}( \bar \ph_0, \s_0)}{\tau} \rightharpoonup [\xi, \eta] \text{ in } \Big (  H^1(0,T;V^*) \cap L^\infty(0,T;H) \cap L^2(0,T;V) \Big )^2.
\end{align*}
\end{thm}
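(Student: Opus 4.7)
Since $U$ is convex and both $\bar\ph_0,v\in U$, the perturbed initial datum $\bar\ph_0+\tau h=(1-\tau)\bar\ph_0+\tau v$ lies in $U$ for every $\tau\in(0,1]$, so $[\ph_\tau,\s_\tau]:=\mathcal{S}(\bar\ph_0+\tau h,\s_0)$ is a well-defined strong solution by Theorem~\ref{thm:strong}, unique by Corollary~\ref{cor:uniq}. The key first step is to apply Theorem~\ref{thm:cts1} to the pair $[\bar\ph,\bar\s]$, $[\ph_\tau,\s_\tau]$, which, since $\s_0$ is common to both, yields
\begin{align*}
\|\ph_\tau-\bar\ph\|_{H^1(0,T;V^*)\cap L^2(0,T;V)}+\|\s_\tau-\bar\s\|_{H^1(0,T;V^*)\cap L^2(0,T;V)}\leq K_4\tau\|h\|.
\end{align*}
Setting $\xi_\tau:=\tau^{-1}(\ph_\tau-\bar\ph)$ and $\eta_\tau:=\tau^{-1}(\s_\tau-\bar\s)$ and dividing by $\tau$, I obtain uniform (in $\tau$) bounds on $(\xi_\tau,\eta_\tau)$ in $H^1(0,T;V^*)\cap L^2(0,T;V)\hookrightarrow C^0([0,T];H)$, and hence also in $L^\infty(0,T;H)$. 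A by-product is that $\ph_\tau\to\bar\ph$ and $\s_\tau\to\bar\s$ strongly in $L^2(0,T;V)\cap C^0([0,T];H)$ at rate $\tau$, and, up to a subsequence, a.e.~in $Q$.

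By weak and weak-$*$ compactness, extracting a further subsequence (not relabelled) yields $\xi_\tau\rightharpoonup\tilde\xi$ and $\eta_\tau\rightharpoonup\tilde\eta$ in the claimed topologies, with strong convergence in $L^2(0,T;H)$ via Aubin--Lions, and with $\tilde\xi(0)=h$ (since $\xi_\tau(0)=h$ identically) and $\tilde\eta(0)=0$. To identify $(\tilde\xi,\tilde\eta)$ with $(\xi,\eta)$, I would subtract the weak formulations \eqref{W_first}-\eqref{W_second} written for $[\ph_\tau,\s_\tau]$ and for $[\bar\ph,\bar\s]$, divide by $\tau$, and exploit the integral mean-value identity
\begin{align*}
\frac{g(\ph_\tau)-g(\bar\ph)}{\tau}=\xi_\tau\int_0^1 g'\bigl(\bar\ph+s(\ph_\tau-\bar\ph)\bigr)\,ds\qquad\text{for}\qquad g\in\{\Lam,m,F',PF',P\}.
\end{align*}
The resulting system for $(\xi_\tau,\eta_\tau)$ has the algebraic structure of \eqref{lin:weak:1}-\eqref{lin:weak:2}, with $\lam,\lam',m,m',(PF')',P,P'$ replaced by Bochner-type averages along the segment joining $\bar\ph$ and $\ph_\tau$. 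Thanks to the uniform bound $|\ph_\tau|,|\bar\ph|\leq 1$, the a.e.~convergence of $\ph_\tau$ to $\bar\ph$, the continuity on $[-1,1]$ of these functions guaranteed by \eqref{ass:str:m:n}-\eqref{ass:str:P}, and dominated convergence, each such average converges strongly to its value at $\bar\ph$ in $L^q(Q)$ for every $q<\infty$.

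Passage to the limit in all terms linear in $\xi_\tau$ is then carried out by coupling weak convergence of $\xi_\tau$ or $\nabla\xi_\tau$ in $L^2(Q)$ with the strong convergence of these coefficients. The delicate terms are those requiring a product-rule splitting, such as
\begin{align*}
\frac{\nabla\Lam(\ph_\tau)-\nabla\Lam(\bar\ph)}{\tau}=A\lam(\bar\ph)\nabla\xi_\tau+A\,\xi_\tau\,\nabla\ph_\tau\int_0^1\lam'\bigl(\bar\ph+s(\ph_\tau-\bar\ph)\bigr)\,ds,
\end{align*}
together with analogous splittings for $m(\ph_\tau)(\nabla J\star\ph_\tau)$, $n(\bar\ph)\nabla\eta_\tau$ and for the proliferation terms. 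Each remainder is controlled by combining the strong convergence $\nabla\ph_\tau\to\nabla\bar\ph$ in $L^2(Q)$ (with rate $\tau$) with the boundedness of $\xi_\tau$ in $L^4(Q)$, obtained from $L^\infty(0,T;H)\cap L^2(0,T;V)\hookrightarrow L^4(Q)$ in dimension two via \eqref{interpol_abs_2d}, and with the integrability $\nabla\bar\ph,\nabla\bar\s\in L^4(0,T;L^4(\Omega))$ obtained by interpolation from $\bar\ph,\bar\s\in L^\infty(0,T;V)\cap L^2(0,T;W)$. Once the limit equations coincide with the weak form \eqref{lin:weak:1}-\eqref{lin:weak:2} of the linearised system with initial data $(h,0)$, the uniqueness statement of Theorem~\ref{thm:lin:adj} forces $(\tilde\xi,\tilde\eta)=(\xi,\eta)$, and by the usual subsequence principle the whole family $(\xi_\tau,\eta_\tau)$ converges to $(\xi,\eta)$ in the claimed topology.

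The main obstacle is bookkeeping rather than conceptual: one must ensure that every bilinear and trilinear term produced by the splittings lies in a function space where the available strong and weak convergences justify passage to the limit. The crucial enabling fact is the $L^2(0,T;V)$-continuous dependence with rate $\tau$ from Theorem~\ref{thm:cts1}, which promotes $\ph_\tau-\bar\ph$ from a merely weakly convergent to a strongly convergent sequence and, together with the regularity of the reference solution $[\bar\ph,\bar\s]$ coming from Theorem~\ref{thm:strong}, supplies the integrability needed to close every passage to the limit.
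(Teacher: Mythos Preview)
Your proposal is correct and follows essentially the same route as the paper: convexity of $U$ for admissibility of the perturbed datum, the rate-$\tau$ estimate of Theorem~\ref{thm:cts1} for uniform bounds on the difference quotients, Aubin--Lions for strong $L^2(Q)$ convergence, the integral mean-value theorem to linearise the nonlinearities, and uniqueness in Theorem~\ref{thm:lin:adj} together with the subsequence principle to conclude. One small slip: $F'$ should not appear alone in your list of $g$'s since $F''$ is unbounded on $(-1,1)$; only the product $PF'\in C^1([-1,1])$ enters the equations, and the paper also streamlines the trilinear limits by first testing with $v\in W$, $\zeta\in C^\infty_c(0,T)$ and then using density, which is equivalent to your $L^4(Q)$-interpolation argument.
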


\begin{proof}
Let $\last{ \{\tau_k\}_{k \in \mathbb{N}} }\subset (0,1]$ be a null sequence, and by convexity of $U$, it holds that
\begin{align*}
u_{\tau_k} := (1- \tau_k) \bar \ph_0 + \tau_{k} v \in U \quad \forall k \in \mathbb{N}.
\end{align*}
For all $k \in \mathbb{N}$, by Theorem \ref{thm:strong} $(\mathrm{P})$ admits a unique solution $[\ph_{\tau_k}, \s_{\tau_k}]$ corresponding to the initial data $[u_{\tau_k}, \s_0]$.  Then, invoking Theorem \ref{thm:cts1} we find that
\begin{align*}
& \| \ph_{\tau_k} - \bar \ph \|_{ H^1(0,T;V^*) \cap L^\infty(0,T;H) \cap L^2(0,T;V) } + \| \s_{\tau_k} - \bar \s \|_{ H^1(0,T;V^*) \cap L^\infty(0,T;H) \cap L^2(0,T;V) }\\
& \quad \leq K_4 \tau_k \| v - \bar \ph_0 \|.
\end{align*}
This shows that
\begin{align*}
\ph_{\tau_k} \to \bar \ph, \quad \s_{\tau_k} \to \bar \s & \text{ in } H^1(0,T;V^*)\cap L^\infty(0,T;H) \cap L^2(0,T;V), \\
\tau_{k}^{-1}(\ph_{\tau_k} - \bar \ph) \rightharpoonup \widehat \Phi, \quad \tau_{k}^{-1}(\s_{\tau_k} - \bar \s) \rightharpoonup \widehat \Sigma & \text{ in }
H^1(0,T;V^*)\cap L^\infty(0,T;H) \cap L^2(0,T;V) , \\
\tau_{k}^{-1}(\ph_{\tau_k} - \bar \ph) \to \widehat \Phi, \quad \tau_{k}^{-1}(\s_{\tau_k} - \bar \s) \to \widehat \Sigma & \text{ in } C^0([0,T];V^*) \cap L^2(0,T;H),
\end{align*}
with $\widehat \Phi, \widehat \Sigma$ also belonging to $C^0([0,T];H)$ due to the continuous embedding $H^1(0,T;V^*)\cap L^2(0,T;V) \subset C^0([0,T];H)$ (i.e., the Lions--Magenes lemma).  Invoking the integral form of the mean value theorem, for any $v \in W$ and $\zeta \in C^\infty_c(0,T)$, we find that
\begin{align*}
& \frac{1}{\tau_k} \int_Q  ( P(\ph_{\tau_k}) - P(\bar \ph) )(\s_{\tau_k} + B J \star \ph_{\tau_k})\zeta(t) v \\
& \quad = \int_Q \int_0^1 P'((1-s) \ph_{\tau_k} + s \bar \ph) ds \frac{\ph_{\tau_k} - \bar \ph}{\tau_k} [(\s_{\tau_k} + B J \star \ph_{\tau_k}) \zeta(t) v] \\
& \quad \to \intQ P'(\bar \ph) (\bar \s + B J \star \bar \ph) \widehat \Phi \zeta(t) v,
\end{align*}
due to the strong convergence of $\tau_k^{-1}(\ph_{\tau_k} - \bar \ph)$ in $L^2(0,T;H)$, as well as the strong convergence of the remainder term in $L^2(0,T;H)$ by the generalised dominated convergence theorem.  Similarly, we see that
\begin{align*}
& \frac{1}{\tau_k} \int_Q \zeta(t) (\nabla \Lam(\ph_{\tau_k}) - \nabla \Lam(\bar \ph) ) \cdot \nabla v \\
& \quad = \frac{A}{\tau_k} \int_Q \zeta(t) [(\lam(\ph_{\tau_k}) - \lam( \bar \ph))\nabla \ph_{\tau_k} + \lam(\bar \ph) \nabla (\ph_{\tau_k} - \bar \ph)] \cdot \nabla v \\
& \quad = A \int_Q  \int_0^1 \lam'((1-s) \ph_{\tau_k} + s \bar \ph) ds \frac{\ph_{\tau_k} - \bar \ph}{\tau_k} [\nabla \ph_{\tau_k} \cdot \zeta(t) \nabla v] + \lam(\bar \ph) \nabla \Big (\frac{\ph_{\tau_k} - \bar \ph}{\tau_k} \Big )\cdot \zeta(t) \nabla v \\
& \quad \to A \int_Q \big(\lam'(\bar \ph) \widehat \Phi \nabla \bar \ph + \lam(\bar \ph) \nabla \widehat \Phi \big) \cdot \zeta(t) \nabla v,
\end{align*}
as well as
\begin{align*}
& \frac{1}{\tau_k} \int_Q \zeta(t) B \Big ((m(\ph_{\tau_k}) - m(\bar \ph)) (\nabla J \star \ph_{\tau_k}) + m(\bar \ph) (\nabla J \star (\ph_{\tau_k} - \bar \ph)) \Big )\cdot \nabla v \\
& \quad \to \int_Q B [m'(\bar \ph) \widehat \Phi (\nabla J \star \bar \ph) + m(\bar \ph) (\nabla J \star \widehat \Phi)] \cdot \zeta(t) \nabla v.
\end{align*}
In particular, by subtracting the weak formulation \eqref{W_first}-\eqref{W_second} of $(\mathrm{P})$ for $[\bar \ph, \bar \s]$ with test functions $v, w \in W$ from the weak formulation of $(\mathrm{P})$ for $[\ph_{\tau_k}, \s_{\tau_k}]$, dividing the resulting equations by $\tau_k$, multiplying by $\zeta(t) \in C^\infty_c(0,T)$, integrating over $(0,T)$ and then sending $\tau_k \to 0$, we infer that $[\widehat \Phi, \widehat \Sigma]$ satisfies \eqref{lin:weak:1}-\eqref{lin:weak:2} (with $\widehat{\Phi}$ and $\widehat{\Sigma}$ in place of $\xi$ and $\eta$, respectively), after applying a standard argument to remove the integral over $(0,T)$ and applying the density of $W$ in $V$.  Moreover, using that $\tau_{k}^{-1}(\ph_{\tau_k} - \bar \ph) \to \widehat \Phi$ in $C^0([0,T];V^*)$ we find that
\begin{align*}
\< v - \bar \ph_0, \theta>_V = \< \tau_{k}^{-1}( \ph_{\tau_k}(0) - \bar \ph_0), \theta >_V \to  \< \widehat \Phi(0), \theta >_V \quad \forall \theta \in V.
\end{align*}
Since $\widehat \Phi \in C^0([0,T];H)$ we obtain the identification $\widehat \Phi(0) = h = v - \bar \ph_0$ and a similar argument also shows $\widehat \Sigma(0) = 0$.  Hence, $[\widehat \Phi, \widehat \Sigma]$ is a solution to the linearised system \eqref{linearised} corresponding to $(h, \bar \ph, \bar \s)$, and thus by uniqueness we can identify $\widehat \Phi = \xi$ and $\widehat \Sigma = \eta$.  In light of the fact that $[\xi, \eta]$ is independent of the subsequence $\last{ \{\tau_k\}_{k \in \mathbb{N}} }$
%$(\tau_k)_{k \in \mathbb{N}}$
chosen for the weak/strong convergences, we conclude the assertion is valid for any null sequence.
\end{proof}

\subsection{Analysis of the constrained minimisation problem}
The main result concerning \eqref{opt} is formulated as follows.

\begin{thm}
\label{thm:opt}
Suppose \eqref{ass:m:F}-\eqref{ass:ini}, \eqref{ass:str:m:n}-\eqref{ass:str:P} are fulfilled.
Let $U$ be defined by \eqref{defU} and let $\ph_\Omega \in H$.  Then, the following assertions hold:
\begin{enumerate}
\item[$(i)$]  For any $\alpha > 0$, there exists at least one solution $\bar \ph_0^\alpha \in U$ to the minimisation problem \eqref{opt}.
\item[$(ii)$] Suppose the inverse problem \eqref{inv} has at least one admissible solution $\ph_*$, i.e., $\exists \, \ph_* \in U $ such that $S(\ph_*) = \ph_\Omega$ a.e.~in $\Omega$.  %For any $\delta > 0$, let
    \last{Let $\{\alpha_\delta\}_{\delta > 0}$ be a sequence of positive real numbers such that
    $\alpha_\delta\to 0$, and
     $\delta^2/\alpha_\delta$ is bounded as $\delta \to 0$.}  Let $\bar \ph_0^{\alpha_\delta}$ be a solution to the constrained minimisation problem
\begin{align*}
\mathrm{argmin}_{u \in U} \Big ( \frac{1}{2} \| S(u) - \ph_\Omega^\delta \|^2 + \frac{\alpha_\delta}{2} \| u \|_V^2 \Big ),
\end{align*}
where $\ph_\Omega^\delta \in H$ satisfies $\| \ph_\Omega - \ph_\Omega^\delta \| \leq \delta$.  Then, there exists a non-relabelled subsequence of \last{$\{\bar \ph_0^{\alpha_\delta} \}_{\delta > 0}$} and a solution $\bar \ph_0^* \in U$ to the inverse problem \eqref{inv} such that, as $\delta \to 0$,
\begin{align}\label{inv:conv}
\bar \ph_0^{\alpha_\delta} \rightharpoonup \bar \ph_0^* \text{ in } V, \quad \bar \ph_0^{\alpha_\delta} \to \bar \ph_0^* \text{ in } H \text{ and a.e.~in } \Omega.
\end{align}
\item[$(iii)$] For any $\alpha > 0$, assume $\ph_\Omega \in V$, and denote by $[\bar \ph, \bar \s]$ the unique strong solution to $(\mathrm{P})$ corresponding to initial data $[\bar \ph_0^\alpha, \s_0]$ obtained from Theorem \ref{thm:strong}, and by $[p, q]$ the unique weak solution to \eqref{adjoint} corresponding to $(\ph_\Omega, \bar \ph, \bar \s)$.  Then, it holds that
\begin{align}\label{opt:ineq}
\iO (\alpha \bar \ph_0^\alpha  + p(0))(v - \bar \ph_0^\alpha) + \iO \alpha \nabla \bar \ph_0^\alpha \cdot \nabla (v - \bar \ph_0^\alpha) \geq 0 \quad \forall v \in U.
\end{align}
\end{enumerate}
\end{thm}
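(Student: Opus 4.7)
The three assertions are handled separately.

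\emph{(i) Existence of minimisers.} I would apply the direct method of the calculus of variations to the cost functional $\mathcal{J}_\alpha(u) := \tfrac{1}{2}\| S(u) - \ph_\Omega \|^2 + \tfrac{\alpha}{2}\| u \|_V^2$. Take a minimising sequence $\{u_n\} \subset U$. The regularisation term provides a uniform $V$-bound, so by Banach--Alaoglu and Rellich--Kondrachov, up to a subsequence, $u_n \rightharpoonup \bar \ph_0^\alpha$ in $V$, $u_n \to \bar \ph_0^\alpha$ in $H$ and a.e.~in $\Omega$. Since the admissible set is convex and closed in $H$ (both constraints $|u|\leq 1$ and $M(u)\leq \kappa$ pass to the a.e.~limit by continuity of $M$), it is weakly sequentially closed in $V$, whence $\bar \ph_0^\alpha \in U$. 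The continuous dependence estimate of Theorem \ref{thm:cts1} (with the common datum $\s_0$), together with the embedding $H^1(0,T;V^*)\cap L^2(0,T;V) \hookrightarrow C^0([0,T];H)$ used to evaluate at $t=T$, shows that $S : H \to H$ is continuous. Hence $S(u_n) \to S(\bar \ph_0^\alpha)$ strongly in $H$, while $\|\cdot\|_V^2$ is weakly lower semicontinuous, so $\mathcal{J}_\alpha(\bar \ph_0^\alpha) \leq \liminf_n \mathcal{J}_\alpha(u_n)$ yields a minimiser.

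\emph{(ii) Tikhonov convergence.} Using $\ph_*$ as a competitor in the minimisation defining $\bar \ph_0^{\alpha_\delta}$, together with $\| \ph_\Omega - \ph_\Omega^\delta \| \leq \delta$ and $S(\ph_*) = \ph_\Omega$, yields
\begin{align*}
\tfrac{1}{2}\| S(\bar \ph_0^{\alpha_\delta}) - \ph_\Omega^\delta \|^2 + \tfrac{\alpha_\delta}{2}\| \bar \ph_0^{\alpha_\delta} \|_V^2 \leq \tfrac{\delta^2}{2} + \tfrac{\alpha_\delta}{2}\| \ph_* \|_V^2.
\end{align*}
Dividing by $\alpha_\delta$ and using the boundedness of $\delta^2/\alpha_\delta$ produces a uniform $V$-bound on $\bar \ph_0^{\alpha_\delta}$, while retaining the first term gives $\| S(\bar \ph_0^{\alpha_\delta}) - \ph_\Omega^\delta \|^2 \leq \delta^2 + \alpha_\delta \| \ph_* \|_V^2 \to 0$. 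Extracting subsequences as in (i) provides $\bar \ph_0^* \in U$ with the convergences \eqref{inv:conv}, and the continuity of $S$ combined with $\ph_\Omega^\delta \to \ph_\Omega$ in $H$ forces $S(\bar \ph_0^*) = \ph_\Omega$, so $\bar \ph_0^*$ solves \eqref{inv}.

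\emph{(iii) First-order optimality conditions.} For any $v \in U$, convexity of $U$ gives $(1-t)\bar \ph_0^\alpha + tv \in U$ for $t\in [0,1]$, so by minimality $\mathcal{J}_\alpha((1-t)\bar \ph_0^\alpha + tv) \geq \mathcal{J}_\alpha(\bar \ph_0^\alpha)$. Differentiating at $t = 0^+$ via Theorem \ref{thm:Gat} applied with direction $h = v - \bar \ph_0^\alpha$ (noting that the difference quotient for the $V$-norm is elementary) yields
\begin{align*}
\iO (\bar \ph(T) - \ph_\Omega)\,\xi(T) + \alpha \iO \bar \ph_0^\alpha\,(v - \bar \ph_0^\alpha) + \alpha \iO \nabla \bar \ph_0^\alpha \cdot \nabla (v - \bar \ph_0^\alpha) \geq 0,
\end{align*}
where $\xi$ is the first component of the unique solution to \eqref{linearised}. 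Substituting the adjoint identity \eqref{cor:opt} to replace $\int_\Omega (\bar \ph(T) - \ph_\Omega)\xi(T)$ by $\int_\Omega p(0) h$ delivers \eqref{opt:ineq}.

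The main obstacle is anticipated in Part~(iii): passing to the limit in the endpoint pairing requires converting the weak convergence supplied by Theorem \ref{thm:Gat} (which is integrated in time) into a weak-in-$H$ convergence at the single instant $t = T$. This is reconciled by combining the strong $C^0([0,T];V^*)$-convergence of the difference quotient from the proof of Theorem \ref{thm:Gat} with its $L^\infty(0,T;H)$-boundedness: the unique $V^*$-limit at $T$ forces weak convergence in $H$ at $T$, which suffices to pair against the terminal residual $\bar \ph(T) - \ph_\Omega \in H$.
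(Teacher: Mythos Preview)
Your proposal is correct and follows essentially the same route as the paper: the direct method for (i), the competitor estimate with $\ph_*$ and continuity of $S$ (via Theorem~\ref{thm:cts1} and the Lions--Magenes embedding) for (ii), and the variational inequality obtained from G\^ateaux differentiability (Theorem~\ref{thm:Gat}) combined with the adjoint identity \eqref{cor:opt} for (iii).

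The only point where you deviate slightly is the endpoint argument in (iii). The paper exploits the hypothesis $\ph_\Omega \in V$ together with the regularity $\ph_\tau,\bar\ph \in L^2(0,T;W)\cap H^1(0,T;H) \subset C^0([0,T];V)$, which gives a uniform $V$-bound on $\ph_\tau(T)$; it then pairs the strong $C^0([0,T];V^*)$-convergence of $\tau^{-1}(\ph_\tau-\bar\ph)-\xi$ against $\ph_\tau(T)-\ph_\Omega \in V$ via the $V^*$--$V$ duality. Your alternative---upgrading the $V^*$-limit at $T$ to weak-$H$ convergence using the uniform $C^0([0,T];H)$-bound on the difference quotient, and then pairing weak-strong in $H$---is also valid and in fact does not require $\ph_\Omega \in V$. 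Either device closes the argument; yours is marginally more economical, while the paper's makes the role of the extra assumption on $\ph_\Omega$ transparent.
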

\begin{proof}
The first assertion on the existence of a solution to \eqref{opt} can be proved using the direct method, and since this is somewhat standard in the literature we omit the details.

For the second assertion we adapt the ideas in \cite[Prop.~2.5]{BRV}, see also \cite[Thm.~10.3]{EHN}.  From the definition of $\bar \ph_0^{\alpha_\delta}$ we see that
\begin{align}\label{inv:comp}
\frac{1}{2} \| S( \bar \ph_0^{\alpha_\delta}) - \ph_\Omega^{\delta} \|^2 + \frac{\alpha_\delta}{2} \| \bar \ph_0^{\alpha_\delta} \|_V^2 \leq \frac{1}{2} \| S(\ph_*) - \ph_\Omega^{\delta} \|^2 + \frac{\alpha_\delta}{2} \| \ph_* \|_V^2.
\end{align}
In particular, since $S(\ph_*) = \ph_\Omega$ and $\| \ph_\Omega - \ph_\Omega^{\delta} \| \leq \delta$, it holds that
\begin{align*}
\| \bar \ph_0^{\alpha_\delta} \|_V^2 \leq \frac{2}{\alpha_\delta} \Big ( \frac{\delta^2}{2} + \frac{\alpha_\delta}{2} \| \ph_* \|_V^2 \Big ) = \frac{\delta^2}{\alpha_\delta} + \| \ph_* \|_V^2.
\end{align*}
This yields uniform boundedness of $\{ \bar \ph_0^{\alpha_\delta} \}_{\delta > 0}$ in $V$, and by compactness we infer the existence of a function $\bar \ph_0^* \in V$ such that \eqref{inv:conv} holds.  To see that $\bar \ph_0^*$ is a solution to the inverse problem, we first notice that $\bar \ph_0^*\in U$, since $U$ is closed in $H$, and we recall the continuity property $S(\bar \ph_0^{\alpha_\delta}) \to S(\bar \ph_0^*)$ in $H$, which comes from the continuous dependence estimate \eqref{cts1} in Theorem \ref{thm:cts1} and the Lions--Magenes lemma $ H^1(0,T;V^*) \cap L^2(0,T;V)  \subset C^0([0,T];H)$.  In particular, we deduce from \eqref{inv:comp} that
\begin{align*}
\| S(\bar \ph_0^{\alpha_\delta}) - \ph_\Omega^{\delta} \|^2 \leq \| S(\ph_*) - \ph_\Omega^{\delta} \|^2 + \alpha_\delta \| \ph_* \|_V^2 \leq \delta^2 + \alpha_\delta \| \ph_* \|_V^2 \to 0 \text{ as } \delta \to 0,
\end{align*}
and so passing to the limit $\delta \to 0$ results in
\begin{align*}
\| S(\bar \ph_0^*) - \ph_\Omega \|^2 \leq 0,
\end{align*}
which implies $S(\bar \ph_0^*) = \ph_\Omega$ a.e.~in $\Omega$.

For the last assertion, we look at the differentiability of the objective functional $f_\alpha$ given by
\begin{align*}
f_\alpha(u):=\frac{1}{2}\Vert S(u)-\varphi_\Omega\Vert^2+\frac{\alpha}{2}\Vert u\Vert_V^2\qquad\forall u\in U\,,
\end{align*}
at $\bar \ph_0$ in the direction $v - \bar \ph_0$.  Standard arguments show
\begin{align*}
\lim_{\tau \downarrow 0} \frac{1}{2\tau} \big ( \| \bar \ph_0 + \tau (v - \bar \ph_0) \|_V^2 - \| \bar \ph_0 \|_V^2 \big ) = \iO \bar \ph_0 (v - \bar \ph_0)
+ \iO \nabla \bar \ph_0 \cdot \nabla (v - \bar \ph_0).
\end{align*}
Moreover, denoting $[\ph_\tau, \s_\tau] = \mathcal{S}(\bar \ph_0 + \tau (v - \bar \ph_0), \s_0)$, and $[\xi, \eta]$ as the unique solution to \eqref{linearised} corresponding to $(h, \bar \ph, \bar \s)$, we also have
\begin{align*}
& \lim_{\tau \downarrow 0} \frac{1}{2\tau} \big ( \| \ph_{\tau}(T) - \ph_\Omega \|^2 - \| \bar \ph(T) - \ph_\Omega\|^2\big )\\
& \quad  = \lim_{\tau \downarrow 0} \frac{1}{2} \iO \Big(\tfrac{\ph_\tau(T) - \bar \ph(T)}{\tau} (\ph_\tau(T) - \ph_\Omega) + \tfrac{\ph_\tau(T) - \bar \ph(T)}{\tau} (\bar \ph(T) - \ph_\Omega) \Big)\\
& \quad =  \lim_{\tau \downarrow 0} \frac{1}{2} \iO \Big(\big ( \tfrac{\ph_\tau(T) - \bar \ph(T)}{\tau}  - \xi(T) \big ) (\ph_\tau(T) - \ph_\Omega) + \big (\tfrac{\ph_\tau(T) - \bar \ph(T)}{\tau} - \xi(T) \big ) (\bar \ph(T) - \ph_\Omega)\Big) \\
& \qquad + \iO \xi(T) (\bar \ph(T) - \ph_\Omega)
\end{align*}
in light of the strong convergence $\ph_\tau \to \bar \ph$ in $H^1(0,T;V^*)\cap L^2(0,T;V) \subset C^0(0,T;H)$, see Theorem \ref{thm:Gat}.  Due to the regularities stated in Theorem \ref{thm:strong} and invoking \cite[Sec.~5.9, Thm.~4]{EV} we have that
\begin{align*}
\ph_\tau, \bar \ph \in L^2(0,T;W) \cap H^1(0,T;H) \subset C^0([0,T];V),
\end{align*}
and from the proof of Theorem \ref{thm:strong}, there exists a positive constant $C$ not depending on $\ph_\tau$, $\s_\tau$, $\bar \ph_0 + \tau(v - \bar \ph_0)$ and $\s_0$ such that
\begin{align*}
\| \ph_\tau \|_{H^1(0,T:H) \cap L^\infty(0,T;V) \cap L^2(0,T;W)} \leq C \|\bar \ph_0 + \tau(v - \bar \ph_0) \|_V + C \| \s_0 \|_V.
\end{align*}
In particular we have the uniform boundedness of $\|\ph_\tau(T) \|_V$ in $\tau \in (0,1]$.  Hence, as $\tau \to 0$,
\begin{align*}
& \left | \iO \big ( \tfrac{\ph_\tau(T) - \bar \ph(T)}{\tau}  - \xi(T) \big ) (\ph_\tau(T) - \ph_\Omega) \right | \\
& \quad  \leq  \| \tfrac{\ph_\tau(T) - \bar \ph(T)}{\tau} - \xi(T) \|_{V^*} \| \ph_\tau(T) - \ph_\Omega \|_V \leq C \| \tfrac{\ph_\tau - \bar \ph}{\tau} - \xi \|_{C^0([0,T];V^*)} \to 0,
\end{align*}
which implies
\begin{align*}
 \lim_{\tau \downarrow 0} \frac{1}{2\tau} \big ( \| \ph_{\tau}(T) - \ph_\Omega \|^2 - \| \bar \ph(T) - \ph_\Omega\|^2\big ) = \iO \xi(T) (\bar \ph(T) - \ph_\Omega ).
\end{align*}
Hence, by applying \cite[Lem.~2.21]{Tr} to the objective functional $f_\alpha:U\to\mathbb{R}$, we deduce that
$\bar \ph_0 \in U$ necessarily satisfies the optimality condition
\begin{align*}
\alpha \iO \bar \ph_0 (v - \bar \ph_0) + \alpha \iO \nabla \bar \ph_0 \cdot \nabla ( v - \bar \ph_0) + \iO \xi(T) (\bar \ph(T) - \ph_\Omega) \geq 0 \quad \forall v \in U,
\end{align*}
and through \eqref{cor:opt} we obtain \eqref{opt:ineq}.
\end{proof}

\section*{Acknowledgments}
\last{The authors are grateful to the reviewers for their comments and suggestions.}
The first and third authors are members of GNAMPA (Gruppo Nazionale per l'Analisi Matematica, la Probabilit\`{a} e le loro
Applicazioni) of INdAM (Istituto Nazionale di Alta Matematica).
The work of the second author is partially supported by a grant from the Research Grants Council of the Hong Kong Special Administrative Region, China [Project No.: HKBU 14302319].

\end{document}